\documentclass[a4paper,reqno,
11pt
]{amsart}
\usepackage{
amssymb,
amsmath,
amsthm,
eucal,
empheq,
cases,
dsfont,
multicol,
mathrsfs,
tikz,
graphicx,
hyperref,
esvect
}
\usepackage{pgfplots}
\pgfplotsset{compat=1.17}

\usepackage{lipsum}
\makeatletter
\renewcommand*{\eqref}[1]{%
\hyperref[{#1}]{\textup{\tagform@{\!\!\ref*{#1}}}}%
}

\makeatletter
 
  \@addtoreset{equation}{section}
 \makeatother
\makeatletter

\newcommand{\opnorm}{\@ifstar\@opnorms\@opnorm}
\newcommand{\@opnorms}[1]{%
  \left|\mkern-1.5mu\left|\mkern-1.5mu\left|
   #1
  \right|\mkern-1.5mu\right|\mkern-1.5mu\right|
}
\newcommand{\@opnorm}[2][]{%
  \mathopen{#1|\mkern-1.5mu#1|\mkern-1.5mu#1|}
  #2
  \mathclose{#1|\mkern-1.5mu#1|\mkern-1.5mu#1|}
}

\makeatother\theoremstyle{plain}
\newtheorem{theorem}{Theorem}[section]
\newtheorem{lemma}[theorem]{Lemma}
\newtheorem{proposition}[theorem]{Proposition}

\theoremstyle{definition}

\newtheorem{remark}[theorem]{Remark}

\newcommand{\bvec}[1]{\mbox{\boldmath $#1$}}

\def\Re{\mathop{\mathrm{Re}}\nolimits}
\def\Im{\mathop{\mathrm{Im}}\nolimits}

\def\R{{\mathbb{R}}}

\def\N{{\mathbb{N}}}
\def\C{{\mathbb{C}}}

\def\F{{\mathcal{F}}}

\def\<{{\langle}}
\def\>{{\rangle}}

\def\ep{{\varepsilon}}
\def\ds{\displaystyle}

\title[Modified wave operators for subcritical Long-range NLS]{Large-data modified wave operators for the defocusing  nonlinear Schr\"odinger equation in one space dimension with subcritical long-range nonlinearity}

\author{Masaki Kawamoto}\address[M. Kawamoto]{Research Institute for Interdisciplinary Science, Okayama University, 3-1-1, Tsushimanaka, Kita-ku, Okayama City, Okayama, 700-8530, Japan}\email{kawamoto.masaki@okayama-u.ac.jp}
\author{Haruya Mizutani}\address[H. Mizutani]{Department of Mathematics, Graduate School of Science, The University of Osaka, Toyonaka, Osaka 560-0043, Japan}\email{haruya@math.sci.osaka-u.ac.jp}

\keywords{1D defocusing subcritical NLS, modified wave operator, modified scattering}
\makeatletter
\@namedef{subjclassname@2020}{%
	\textup{2020} Mathematics Subject Classification}
\makeatother

\subjclass[2020]{Primary: 35Q55; Secondary: 35B40, 35P25}

\begin{document}

%
\begin{abstract}
We study long-time behavior of the solutions to the final state problem for the defocusing nonlinear Schr\"odinger equation (NLS) in one space dimension with the power nonlinearity $|u|^{2\sigma}u$ in the subcritical long-range regime $\frac{2}{\sqrt{7}}<\sigma<1$. Given a prescribed asymptotic profile in a weighted $L^2$-space, without size restriction, obtained by modifying the free solution with a nonlinear polynomial phase correction, we construct a unique global solution of the NLS that scatters to this profile, thereby proving the existence of modified wave operators. The proof relies on two new ingredients. Extending our previous work for the cubic case, we incorporate 
the leading part of the nonlinear term into the linear part as a linear potential by linearizing the NLS around the asymptotic profile and prove a global modified energy estimate for the linearized equation. We also exploit a specific structure of the nonlinearity arising from the linearization, which gives rise to a crucial cancellation when estimating the nonlinear terms in the modified energy space and enables us to control the polynomial growth of the nonlinear phase correction in the subcritical case.


%
\end{abstract}

\maketitle

\section{Introduction}
\label{introduction}
The present paper is a continuation of our previous study \cite{KaMi} on modified scattering theory for  the following nonlinear Schr\"odinger equation (NLS) with power nonlinearity: 
\begin{align}
\label{NLS}
i\partial_t u+\frac12\Delta u=\lambda |u|^{2\sigma}u,\quad x\in \R^d,\quad t\in \R, 
\end{align}
where $u=u(t,x)$ is a $\C$-valued unknown function,  $\Delta=\partial_{x_1}^2+\cdots+\partial_{x_d}^2$, $0<\sigma\le 1/d$ and $\lambda\in \R$.  

The main result of \cite{KaMi} is existence of modified wave operators on a weighted $L^2$-space for the associated final state problem without size restriction on scattering data in the one-dimensional defocusing cubic case: $d=\sigma=1$ and $\lambda>0$. The aim of this paper is to extend this result to the following subcritical case (again in the one-dimensional defocusing setting): $$\frac{2}{\sqrt{7}}<\sigma<1.$$  
For the case with large scattering data, this provides the first construction of modified wave operators for the NLS with the subcritical power nonlinearity. 


Before stating the main result precisely, we first recall the basic concept of the final state problem for \eqref{NLS}. The primary focus  is to construct a unique global solution $u$ to \eqref{NLS} satisfying the following prescribed asymptotic condition at infinity: 
\begin{align}
\label{scattering}
\|e^{-it\Delta/2}\{u(t)-u_{\mathrm p,+}(t)\}\|_{X}\to 0,\quad t\to +\infty,
\end{align}
where  $u_{\mathrm p,+}=u_{\mathrm p,+}(t,x)$ is a given asymptotic profile and $X\subset L^2(\R^d)$ is a suitable function space on $\R^d$. We say that $u$ scatters to $u_{\mathrm p,+}$ in $X$ if \eqref{scattering} holds. Note that \eqref{scattering} also implies $\|u(t)-u_{\mathrm p,+}(t)\|_{L^2}\to 0$. 
A possible canonical  choice of the profile $u_{\mathrm p,+}$ is the free solution $e^{it\Delta/2}u_+$ with the scattering datum $u_+$. In such a case, the wave operator $W_+$ is defined by
\begin{align}
\label{wave_operator}
W_+(u_+):=u(0).
\end{align}
Similarly, one can also consider the final state problem in the negative time direction $t\to -\infty$ and the associated wave operator $W_-(u_-):=u(0)$. Constructing the wave operators  is a crucial step to construct the scattering operator $S=W_+^{-1}\circ W_-$, which is an important object in time-dependent scattering theory; see \cite[Section 7]{Cazenave} for more details. 
However, the wave operators $W_\pm$ do not always exist. In other words, the free solution is not always a relevant choice and the nonlinear effect must be taken into account. Indeed, it is known that no non-trivial solution scatters to a free solution if $\sigma\le 1/d$ (\cite{Strauss,Barab}). 
On the other hand, for $\max\{1/d,2/(d+2)\}<\sigma<2/(d-2)_+$, the wave operators $W_\pm$ exist in $X=H^1\cap \F H^1$; see e.g. \cite{GOV_1994} and \cite[Section 7]{Cazenave}. 


In this paper we are interested in the long-range case $\sigma\le1/d$. To introduce the asymptotic profile for the long-range case, we recall the Dollard decomposition of the free propagator: 
\begin{align}
\label{Dollard}
e^{it\Delta/2}=\mathcal M(t)\mathcal D(t)\mathcal F\mathcal M(t), 
\end{align}
where $\mathcal F f=\widehat f$ denotes the Fourier transform of $f$, $\mathcal M(t)=e^{\frac{ix^2}{2t}}$ and $\mathcal D(t)f(x)=(it)^{-\frac d2}f(x/t)$. This decomposition can be verified by factorizing  the integral kernel of $e^{it\Delta/2}$ as
$$
(2\pi it)^{-\frac d2}e^{\frac{i|x-y|^2}{2t}}=e^{\frac{i|x|^2}{2t}}( it)^{-\frac d2}(2\pi)^{-\frac d2}e^{-i\frac{x}{t}\cdot y}e^{\frac{i|y|^2}{2t}}.
$$
Since $\mathcal M(t)\to 1$ as $t\to\pm\infty$, the leading term of $e^{it\Delta/2}f$ is given by $\mathcal M(t)\mathcal D(t)\widehat f$, namely 
$$
e^{it\Delta/2}f=\mathcal M(t)\mathcal D(t)\widehat f+o_{L^2}(1),\quad t\to \pm\infty.
$$
Then, for a given scattering datum $u_\pm$,  the asymptotic profile  $u_{\mathrm p,\pm}$ for the long-range case is defined by adding a nonlinear phase correction to this linear profile function (\cite{ZaMa,Ozawa_1991,HKN}): 
\begin{align}
\label{u_p}
u_{\mathrm{p},\pm}(t,x)=[\mathcal M(t) \mathcal D(t) w_{\mathrm{p},\pm}](t,x)=(it)^{-\frac d2}e^{\frac{ix^2}{2t}}\widehat{u_\pm}\left(\frac xt\right)\exp\left(\mp i\gamma(t) \left|\widehat{u_\pm}\left(\frac xt\right)\right|^{2\sigma}\right)
\end{align}
where $w_{\mathrm{p},\pm}(t,x)=\widehat{u_\pm}(x)\exp\left(\mp i\gamma(t) |\widehat{u_\pm}(x)|^{2\sigma}\right)$ and
\begin{align}
\label{w_p}
\gamma(t)=\begin{cases}\ds 
\frac{\lambda |t|^{1-d\sigma}}{1-d\sigma}&\text{if}\quad 0<\sigma<1/d,\bigskip\\ 
 \lambda \log|t|&\text{if}\quad \sigma=1/d.\end{cases}
\end{align}
It is easy to see that $w_{\mathrm{p},\pm}$ satisfies the following ODE (often called the reduced ODE): 
\begin{align}
\label{NLS_w_p}
i\partial_t w_{\mathrm{p},\pm}=\lambda t^{-d\sigma} |w_{\mathrm{p},\pm}|^{2\sigma}w_{\mathrm{p},\pm},\quad \pm t>0,\quad x\in \R^d.
\end{align}
Note that the results on modified scattering for the linear Schr\"odinger equation with potentials and for the Hartree equations suggest that, for sufficiently small $\sigma$, this phase correction is not sufficient and higher-order corrections would be required; see also Remarks \ref{remark_2} and \ref{remark_SeWu} below. We also emphasize the polynomial growth of $\gamma(t)$ in the subcritical case $\sigma<1/d$, whereas it is only logarithmic in the critical case $\sigma=1/d$. In addition to the slow time-decay rate $t^{-d\sigma}$ in \eqref{NLS_w_p}, this polynomial growth constitutes a major obstacle to treating the subcritical case in the weighted space 
$$
X=\F H^\nu(\R^d)=\{f\in L^2(\R^d)\ |\ \widehat f\in H^\nu(\R^d)\}=\{f\in L^2(\R^d)\ |\ |x|^\nu f\in L^2(\R^d)\}.
$$ Indeed, since the weight $|x|^\nu$ corresponds to the derivative $|D|^\nu$ under the Fourier transform and also under the pseudo-conformal transform \eqref{v} used in this paper (see the formula \eqref{lemma_PC_2}), we have to deal with terms that grow polynomially in time when estimating the asymptotic profile in weighted spaces. 

Next we review existing literature on the long-range case $\sigma\le 1/d$. For the critical case $\sigma=1/d$, small-data modified scattering for the final state problem was established by Ozawa \cite{Ozawa_1991} for $d=1$ and Ginibre--Ozawa \cite{Ginibre_Ozawa_1993} for $d=2,3$. Precisely, for any $u_+\in \F H^2(\R^d)$ and $\lambda\in \R$ satisfying $|\lambda|\|\widehat{u_+}\|_{L^\infty}^2\ll1$ there exists a unique global solution $u\in C(\R,L^2(\R^d))$ to \eqref{NLS} that scatters to $u_{\mathrm p,+}$ in $L^2(\R^d)$.  Note that $\|\widehat u\|_{L^\infty}$ is an invariant norm with respect to the scaling $(0,\infty)\ni\rho\mapsto \rho^{d}u(\rho^2t,\rho x)$ that leaves the NLS \eqref{NLS} with $\sigma=1/d$ invariant. Thus, this smallness condition cannot be eliminated simply by scaling alone. Later, the conditions on $u_+$ and the topology $X$ of scattering were improved by Hayashi--Naumkin \cite{Hayashi_Naumkin_2006} to $u_+\in \F H^\alpha(\R^d)$ and $X=\F H^\beta(\R^d)$ with $d/2<\beta<\alpha<\min\{d,2,1+2/d\}$, though the same smallness condition was still assumed. Here the restriction $d\le3$ comes from the condition $d/2<1+2/d$ related with the Sobolev embedding $H^{d/2+\ep}(\R^d)\subset L^\infty(\R^d)$ and the regularity of the nonlinearity $|u|^{2/d}u$. We will provide briefly a proof strategy by \cite{Hayashi_Naumkin_2006} in the beginning of Section \ref{subsection_energy}. It follows from these results that the map \eqref{wave_operator} is still well defined. In this case,  $W_+$ is called the modified wave operator as $u_{\mathrm p}$ has the nonlinear phase modification. For the properties of the modified wave and modified scattering operators (in the small-data regime), we refer to \cite{Carles_2001,Hayashi_Naumkin_2006, Carles_2024}. 

Small data modified scattering for the Cauchy problem of \eqref{NLS}  has also been  extensively studied in the critical case $\sigma=1/d$ with $d\le3$. Since the present paper focuses on the final state problem, we only refer to a seminal paper \cite{Hayashi_Naumkin_1998} by Hayashi--Naumkin and subsequent works \cite{Lindblad_Soffer_2006,KaPu,IfTa} for alternative approaches in one space dimension. 

Compared with the small-data results in the critical case, the large-data or subcritical cases are much less understood. Besides, as shown below, the existing results rely heavily upon either specific features of the cubic nonlinearity in $d=1$ or very restrictive conditions on given data. 

As for the large-data problem, modified scattering for the final state problem has been established by \cite{Ginibre_Velo_2001} for  $d=\sigma=1$ and $\lambda\in \R$ within the framework of an analytic function space (a kind of weighted Gevrey classes of order 1). 
\cite{Deift_Zhou_2002,Deift_Zhou_2003} established the large-data modified scattering in the weighted Sobolev space $\Sigma=H^1(\R)\cap \F H^1(\R)$ for the Cauchy problem of \eqref{NLS} with  $d=\sigma=1$ and $\lambda>0$ by using the inverse scattering methodology. As the real analyticity of the cubic nonlinearity or the complete integrability of the cubic NLS played essential roles in these results, it is difficult to apply these methods to the subcritical cases. The large-data modified scattering for the Cauchy problem was also established by \cite{Cazenave_Naumkin_2018} in the critical case $\sigma=1/d$ for any $d\ge1$. However, they only treated non-vanishing, highly oscillating initial data of the form $u_0=e^{ib|x|^2}v_0$ with sufficiently large $b$, where $v_0$ belongs to a suitable weighted Sobolev space and satisfies $\ds \inf_{x\in \R^d} (1+|x|)^N|v_0(x)|>0$ with large $N$. More recently, \cite{Georgiev_Ozawa} established the large-data modified scattering in $L^2$ for the Cauchy problem in the defocusing critical case $\sigma=1/d$ and $\lambda>0$ with $d=1,2$, under the assumption that the solution $u$ satisfies the same $L^\infty$ decay estimate as that of the free solution, namely $\|u(t)\|_{L^\infty}\lesssim t^{-d/2}$. 

As for the subcritical case, the literature is much more sparse. In \cite{HKN}, the authors proved modified scattering for the Cauchy problem of \eqref{NLS} in $L^2(\R)\cap L^\infty(\R)$ for $1/2<\sigma<1$ and $d=1$ with small initial data $u_0$ which belongs to a suitable analytic function space and satisfies the non-vanishing condition $\ds \inf_{x\in \R^d}(1+|x|)^N|u_0(x)|>0$ with sufficiently large $N$. Later, \cite{Han} extended the result by \cite{HKN}  to the multi-dimensional case $d\ge2$ and ${2}/{(d+\sqrt{d^2+8d})}<\sigma<1/d$. As for the final state problem of \eqref{NLS}, very recently, \cite{SeWu} constructed the modified wave operators for all $d\ge1$, $0<\sigma<1/d$ and $\lambda\in \R$, and for sufficiently small, non-vanishing scattering data $u_+$ in a suitable weighted analytic function space; see Remark \ref{remark_SeWu} for a further discussion. As far as we know, there is no existing literature on  modified scattering for arbitrarily large scattering data in the subcritical case. 


In the previous work \cite{KaMi}, we introduced an approach to the final state problem with large scattering data for the defocusing critical case, based on linearization around the asymptotic profile and a modified energy estimate for the resulting linearized equation. In the present paper, we further develop this approach to treat the subcritical case for arbitrarily large scattering data in weighted $L^2$-spaces without either structural conditions or the analyticity. In addition to the modified energy method developed in \cite{KaMi}, the main new ingredient is the use of a specific structure of the nonlinearity arising from linearization. This structure yields a crucial cancellation in the modified energy estimates for the nonlinear and error terms, enabling us to handle difficulties specific to the subcritical case such as the polynomial growth of the phase correction mentioned above and the non-smoothness of the nonlinearity. 

Finally, it should be mentioned that, for the Hartree type equation (\eqref{NLS} with the nonlinearity replaced by $\lambda (|x|^{-\sigma}*|u|^2)u$), the existence of modified wave operators has been extensively studied for $0<\sigma\le 1$ and $d\ge2$, and much stronger results than those available for \eqref{NLS} have been obtained; see \cite{Ginibre_Velo_2000_1,Ginibre_Velo_2000_2,Ginibre_Velo_2001,Nakanishi_1,Nakanishi_2,Ginibre_Velo_2014,Ginibre_Velo_2015}. However, it is difficult to apply the methods developed in these works to \eqref{NLS} since the smoothing property of the convolution operator in the high frequency region, namely $|x|^{-\sigma}*=c|\nabla|^{\sigma-d}:\dot H^\alpha\to \dot H^{\alpha+d-\sigma}$, plays an essential role in these works. We refer to \cite[Subsection 1.4]{KaMi} for more detailed explanation on the comparison of our approach and those developed for the Hartree equations.


\subsection{Main result}
Now we state the main result in this paper. 

\begin{theorem}
\label{theorem_1} Let $d=1$, $\lambda>0$ and $\frac{2}{\sqrt{7}}<\sigma<1$. Define
$$
\nu_*(\sigma)=
\begin{cases}
\frac{4-3\sigma^2}{2\sigma(2\sigma-1)}&\text{if}\quad \frac{2}{\sqrt7}\le\sigma\le\frac{2+4\sqrt3}{11},\\
2 &\text{if}\quad \frac{2+4\sqrt3}{11}\le\sigma\le\frac 67,\\
\frac{4-3\sigma}{2\sigma-1}&\text{if}\quad \frac 67\le \sigma\le1.
\end{cases}
$$
Then, for any $u_+ \in  \mathcal FH^{\nu}(\R)$ with $\nu_*(\sigma)<\nu<\frac{\sigma}{\sigma-1/2}$, there exists a unique global solution $u\in C(\R, L^2(\R))$ to \eqref{NLS} satisfying $e^{-it\Delta/2}u\in C(\R, \mathcal F H^1(\R))$ and, for any $\delta>0$,
\begin{align}
&\|xe^{-it\Delta/2}\{u(t)-u_{\mathrm p,+}(t)\}\|_{L^2}+t^{\sigma/2-\delta/2}\left\|u(t)-u_{\mathrm p,+}(t)\right\|_{L^2}\nonumber\\
&\quad +t^{-\sigma/2+3/2}\left\|\left|\widehat{u_+}\left(\frac xt\right)\right|^{\sigma-1}\Re\left[\overline{u_{\mathrm p,+}(t)}\{u(t)-{u_{\mathrm p,+}(t)}\}\right]\right\|_{L^2}
\lesssim t^{-\beta+\delta}
\label{theorem_1_1}
\end{align}
uniformly with respect to $t\ge1$, where
\begin{align}
\beta=\beta(\sigma,\nu)=\begin{cases}
\nu(\sigma-1/2)+{3\sigma}/{2}-2&\text{if}\quad 1\le \nu<2,\\
\nu(\sigma-1/2)+\sigma/2-1&\text{if}\quad 2\le \nu<\frac{\sigma}{\sigma-1/2}.
\end{cases}
\label{beta}
\end{align}
As a consequence, the following modified wave operator is well-defined: $$W_{\sigma,+}:\F H^{\nu}(\R)\ni u_+\mapsto u(0)\in \mathcal F H^1(\R).$$ 
\end{theorem}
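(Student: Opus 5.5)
We will reduce to the pseudo-conformal picture and argue via linearization around the profile, following the scheme of \cite{KaMi}. Writing $u=\mathcal M(t)\mathcal D(t)v(\tau)$ with $\tau=-1/t$ (the pseudo-conformal transform), the final state problem becomes a Cauchy-type problem for $v$ as $\tau\to0^-$, with nonlinearity $\lambda|\tau|^{\sigma-1}|v|^{2\sigma}v$, singular at $\tau=0$. The profile becomes $w_{\mathrm p}$ (up to the transform), and the weight $x$ on $e^{-it\Delta/2}(u-u_{\mathrm p})$ corresponds to $\partial_x$ on $v-w_{\mathrm p}$. We set $v=w_{\mathrm p}+r$ and expand
\[
|w_{\mathrm p}+r|^{2\sigma}(w_{\mathrm p}+r)-|w_{\mathrm p}|^{2\sigma}w_{\mathrm p}=(\sigma+1)|w_{\mathrm p}|^{2\sigma}r+\sigma|w_{\mathrm p}|^{2\sigma-2}w_{\mathrm p}^2\bar r+N(r).
\]
The linear part, which is of real type, $|w_{\mathrm p}|^{2\sigma}r+2\sigma|w_{\mathrm p}|^{2\sigma-2}w_{\mathrm p}\Re(\bar w_{\mathrm p}r)$, is kept as a time-dependent potential. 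The error $E=i\partial_\tau w_{\mathrm p}+\frac12\partial_x^2w_{\mathrm p}-\ldots$ comes from $\partial_x^2w_{\mathrm p}$, which is of size $\gamma^2|\widehat{u_+}|^{4\sigma-2}|\partial\widehat{u_+}|^2$. This growth in $\gamma\sim t^{1-\sigma}$ is what restricts $\nu$.

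Next we introduce a modified energy for $r$:
\[
\mathcal E(\tau)=\|\partial_xr\|_{L^2}^2+c\,|\tau|^{\sigma-1}\int|w_{\mathrm p}|^{2\sigma-2}(\Re\,\bar w_{\mathrm p}r)^2\,dx+(\text{weighted }L^2\text{ term}).
\]
This is the analogue of the Hamiltonian of the linearized equation. Defocusing ($\lambda>0$) makes it coercive and makes the dangerous term $\Re\partial_x(\ldots)\partial_x\bar r$ a time derivative up to harmless commutators. The third quantity in \eqref{theorem_1_1}, the weighted norm of $|\widehat{u_+}|^{\sigma-1}\Re[\bar u_{\mathrm p}(u-u_{\mathrm p})]$, is exactly this potential term after undoing the transform. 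The key steps, in order, are:
\begin{enumerate}
\item Compute $\frac{d}{d\tau}\mathcal E$.
\item Verify the cancellation. Terms involving $\partial_\tau|w_{\mathrm p}|$ vanish since $|w_{\mathrm p}|=|\widehat{u_+}|$ is time independent. The terms with $\partial_x w_{\mathrm p}$, which carry a factor $\gamma$, pair with $\Re(\bar w_{\mathrm p}r)$ so that only the controlled combination appears.
\item Bound the error $E$ in the dual energy norm using $u_+\in\F H^\nu$. This uses fractional Leibniz and chain rules for the non-smooth $|\cdot|^{2\sigma}$, which need $\nu<\sigma/(\sigma-1/2)$ so that powers $|\widehat{u_+}|^{2\sigma-k}$ remain integrable against derivatives.
\item Bound $N(r)$, which is quadratic and higher in $r$, by Gagliardo--Nirenberg together with the a priori decay.
\end{enumerate}

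To get the solution, we solve the equation backward from times $T_n\to\infty$ with $r(T_n)=0$, obtaining $u_n$ by global $L^2$/$H^1$ well-posedness of the defocusing NLS. We then derive uniform bounds $\mathcal E(t)\lesssim t^{-2\beta+2\delta}$ by a Gronwall/bootstrap argument in which the error integral $\int_t^\infty\|E\|\,ds$ dictates $\beta$. The two regimes $\nu<2$ and $\nu\ge2$ in \eqref{beta} reflect whether $\partial_x^2 w_{\mathrm p}$ can be estimated directly in $L^2$. The lower threshold $\nu_*(\sigma)$ and $\sigma>2/\sqrt7$ are where $\beta$ exceeds what the nonlinear and potential terms require for closure. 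Uniqueness follows from the same energy estimate applied to the difference of two solutions, and a compactness/limit in $n$ gives existence and defines $W_{\sigma,+}$.

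The main obstacle is step 2 together with step 3: the terms $\gamma\,\partial_x|\widehat{u_+}|^{2\sigma}$ grow like $t^{1-\sigma}$. The first approach is to show that, after symmetrization, these terms only multiply $\Re(\bar w_{\mathrm p}r)$. That quantity is controlled with an extra gain $t^{-3/2+\sigma/2}$ by the potential part of $\mathcal E$, and balancing exponents yields the numerology of $\nu_*$.
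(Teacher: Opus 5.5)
Your architecture matches the paper's: pseudo-conformal transform, absorbing $(\sigma+1)|\varphi|^{2\sigma}r+\sigma|\varphi|^{2\sigma-2}v_{\mathrm p}^2\bar r$ into the linear flow, an $H^1$-level energy containing $s^{\sigma-2}\||\varphi|^{\sigma-1}\Re[\overline{v_{\mathrm p}}r]\|^2$, approximation from $s=\varepsilon$ plus compactness, and uniqueness by the same estimate. But step~3 has a genuine gap in the range of the theorem.

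With $r=v-v_{\mathrm p}$ the source is $-\frac12\Delta v_{\mathrm p}$. It enters $\frac{d}{ds}\|\nabla r\|^2$ through $\Re\langle i\nabla\Delta v_{\mathrm p},\nabla r\rangle$, so it must be measured in the energy norm itself, not in a dual norm. That requires $\nabla\Delta v_{\mathrm p}\in L^2$, i.e.\ essentially $\widehat{u_+}\in H^3$. The theorem assumes only $\nu<\sigma/(\sigma-1/2)<3$, and for $\sigma>6/7$ it allows $\nu<2$, where $\Delta v_{\mathrm p}\notin L^2$ at all. You notice the latter but offer no replacement.

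The missing idea is the Hayashi--Naumkin reformulation: compare $v$ with $e^{is\Delta/2}v_{\mathrm p}$, i.e.\ subtract $e_1=\mathcal R(s)v_{\mathrm p}$ with $\mathcal R(s)=e^{is\Delta/2}-I$. Since $(i\partial_s+\frac12\Delta)e^{is\Delta/2}v_{\mathrm p}=e^{is\Delta/2}i\partial_sv_{\mathrm p}$, the Laplacian error is traded for two better terms:
\begin{itemize}
\item the boundary term $e_1$, which is only evaluated at $s$ and $\varepsilon$ and never integrated;
\item the commutator-type error $e_2=(\Phi\mathcal R-\mathcal R\Phi)v_{\mathrm p}+2\sigma|\varphi|^{2\sigma-2}v_{\mathrm p}\Re[\overline{v_{\mathrm p}}\mathcal Rv_{\mathrm p}]$, with prefactor $s^{\sigma-2}$.
\end{itemize}
These are bounded via $\|\mathcal R(s)f\|_{\dot H^r}\lesssim s^{\nu/2}\|f\|_{\dot H^{r+\nu}}$ with fractional $\nu$ and $\|e^{i\gamma\Phi}f\|_{H^\nu}\lesssim\gamma^\nu\|f\|_{H^\nu}$. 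For $\nu\ge2$ one splits $\mathcal R=\frac{is}{2}\Delta+\mathcal R_1$ and uses two structural facts: the $\gamma^2$-part of $\overline{v_{\mathrm p}}\Delta v_{\mathrm p}$ is real, and $\Im[\overline{v_{\mathrm p}}\,\cdot\,]$ kills the $\Re$-type part of $e_2$. This is exactly what produces both branches of $\beta(\sigma,\nu)$. Relatedly, $\nu<\sigma/(\sigma-1/2)$ is not an integrability condition. It is where $Q[ie_2]\lesssim s^{\nu(\sigma-1/2)-\sigma/2}$ reaches the optimal rate $s^{\sigma/2}$, and larger $\nu$ is covered trivially.

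The cancellation in your last paragraph is genuine but placed in the wrong step. The linear energy identity contains no $\gamma$ at all: pairing with $\partial_s\psi$ and using $\partial_sv_{\mathrm p}=-i\lambda s^{\sigma-2}\Phi v_{\mathrm p}$ avoids $\nabla v_{\mathrm p}$ entirely. Its only unsigned terms are:
\begin{itemize}
\item $s^{-2+\delta}\langle|\varphi|^{2\sigma-2}\Re[\overline{v_{\mathrm p}}\psi],\Im[\overline{v_{\mathrm p}}\psi]\rangle$, absorbed by the two negative terms coming from the weights in $s^{-\sigma+\delta}\|\psi\|^2$ and $s^{\sigma-2}\||\varphi|^{\sigma-1}\Re[\overline{v_{\mathrm p}}\psi]\|^2$;
\item $\lambda s^{\sigma-2}\Im\langle\nabla\psi,|\varphi|^{2\sigma-2}\Re[\overline{\varphi}\nabla\varphi]\psi\rangle$, handled by Gagliardo--Nirenberg and an $s$-dependent Young inequality.
\end{itemize}

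Where the structure is indispensable is your step~4. The energy norm of $iN$ contains $s^{\sigma/2-1}\||\varphi|^{\sigma-1}\Im[\overline{v_{\mathrm p}}N]\|$. The crude bound $|N|\lesssim|r|^2+|r|^{2\sigma+1}$ loses a factor $s^{\sigma-1}$ there. That turns the closing exponent $\mu_3=5\sigma/4-1+\beta$ into $9\sigma/4-2+\beta\le 15\sigma/4-3$, which forces $\sigma>4/5>2/\sqrt7$. One needs $\Im[\overline{v_{\mathrm p}}N]=c\,|\varphi|^{2\sigma-2}\Re[\overline{v_{\mathrm p}}r]\Im[\overline{v_{\mathrm p}}r]+O(|\varphi||r|^{2\sigma+1})$, so that the quadratic part is paid for by the $\Re[\overline{v_{\mathrm p}}r]$ component of the energy. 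Also, $\nabla N$ contains $\nabla v_{\mathrm p}\sim\gamma$; the paper removes this by measuring gradients through $\nabla(e^{-i\gamma\Phi}f)$ and proving this yields an equivalent energy.

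Finally, the transformed nonlinearity is $\lambda s^{\sigma-2}|v|^{2\sigma}v$, not $|\tau|^{\sigma-1}|v|^{2\sigma}v$. The energy weight must be $s^{\sigma-2}$ to produce the gain $t^{-3/2+\sigma/2}$ you quote.
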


\begin{remark}
While the critical case $\sigma=1$ is excluded from the statement of the theorem as it was already proved in our previous paper \cite{KaMi}, a minor modification of the proof shows that the theorem also holds for $\sigma=1$. This provides a slightly different proof of the main result of \cite{KaMi}, where the contraction mapping theorem was used, whereas here we use an energy method based on a weak compactness argument to construct the solution.
\end{remark}

\begin{remark}
\label{remark_1}
By \eqref{theorem_1_1}, the solution $u$ satisfies
\begin{align}
\|xe^{-it\Delta/2}\{u(t)-u_{\mathrm p,+}(t)\}\|_{L^2}&\lesssim t^{-\beta+\delta},\nonumber\\
\left\|u(t)-u_{\mathrm p,+}(t)\right\|_{L^2}&\lesssim t^{-\beta-\sigma/2+3\delta/2},\nonumber\\
\left\|\left|\widehat{u_+}\left(\frac xt\right)\right|^{\sigma-1}\Re\left[\overline{u_{\mathrm p,+}(t)}\{u(t)-{u_{\mathrm p,+}(t)}\}\right]\right\|_{L^2}&\lesssim t^{-\beta-(3-\sigma)/2+\delta}. \label{remark_1_1}
\end{align}
Combining with the estimates $\|e^{it\Delta/2}(1+|x|)^{-1}\|_{L^2\to L^\infty}\lesssim t^{-1/2}$ and $\|u_{\mathrm p,+}(t)\|_{L^\infty}\lesssim t^{-1/2}$, we also have
\begin{align}
\label{remark_1_2}
\|u(t)-u_{\mathrm p,+}(t)\|_{L^\infty}\lesssim t^{-1/2-\beta+\delta},\quad \|u(t)\|_{L^\infty}\lesssim t^{-1/2}. 
\end{align}
These decay estimates, except for \eqref{remark_1_1}, have been usually obtained in the small-data scattering result for the critical case $\sigma=1$; see e.g. \cite{Ozawa_1991,Ginibre_Ozawa_1993,Hayashi_Naumkin_2006}. \eqref{remark_1_1} was implicitly obtained in the previous work \cite{KaMi} again in the critical case $\sigma=1$. This additional information is not only a consequence, but also important for proving Theorem \ref{theorem_1} itself. More precisely, the last term of the LHS of \eqref{theorem_1_1} plays a crucial role in our argument. 
\end{remark}

\begin{figure}[htbp]
\label{figure_1}
\begin{center}
\scalebox{0.9}[0.9]{
\begin{tikzpicture}
  \begin{axis}[
    width=9cm,
    height=6cm,
    xmin=0.75, xmax=1.01,
    ymin=0, ymax=3.1,
    xtick={0,1},
    ytick={0,1,2},
    axis lines=left,
    samples=200,
  ]

  \addplot[thick, domain={2/sqrt(7)}:{(2+4*sqrt(3))/11}]
    {(4-3*x^2)/(2*x*(2*x-1))};

  \addplot[thick, domain={(2+4*sqrt(3))/11}:{6/7}]
    {2};

  \addplot[thick, domain={6/7}:1]
    {(4-3*x)/(2*x-1)};

  \addplot[thick, domain={2/sqrt(7)}:{1}]
    {x/(x-1/2)};

  \end{axis}
\draw (7.132,2.86) node[right] {$\ \nu=\sigma/(\sigma-1/2)$};
\draw (7.132,1.424) node[right] {$\ \nu=\nu_*(\sigma)$};
\draw (7.132,0) node[right] {$\ \ \sigma$};
\draw[dashed] (7.132,2.86) -- (7.132,0);
\draw[dashed] (7.132,1.424) -- (0,1.424);
\draw[dashed] (7.132,2.85) -- (0,2.85);
\draw[dashed] (3.06,2.85) -- (3.06,0);
\draw (3.06,0) node[below] {$\frac67$};
\draw[dashed] (1.75,2.85) -- (1.75,0);
\draw (1.75,0) node[below] {$\frac{2+4\sqrt3}{11}$};
\draw[dashed] (0.177,4.13) -- (0.177,0);
\draw (0.3,0) node[below] {$\frac{2}{\sqrt7}$};
\draw (-0.1,0) node[below] {$\frac34$};
\draw[dashed] (-0.07,4.2) -- (0.177,4.2);
\draw (0,4.13) node[left] {$\frac{16+4\sqrt7}{9}$};
\end{tikzpicture}}
\end{center}
\caption{The admissible region of $(\sigma,\nu)$. We take the line $\sigma=\frac34$ as the $\nu$-axis.}
\end{figure}

\begin{remark}
\label{remark_2}
It is easy to check that $\nu_*(\sigma)$ is continuous and non-increasing, $\nu_*(1)=1$ and $$\nu_*(2/\sqrt{7})=\sigma/(\sigma-1/2)|_{\sigma=2/\sqrt7}=(16+4\sqrt{7})/9<3.$$ Thus, as a direct corollary of Theorem \ref{theorem_1}, the modified wave operator $W_{\sigma,+}:\F H^3(\R)\to \F H^1(\R)$ exists for any $2/\sqrt7<\sigma<1$. 

$\nu_*(\sigma)$ and the lower bound $2/\sqrt7$ of the admissible range of $\sigma$  are determined as follows; see Section \ref{subsection_existence_3} for the details. Define $\mu(\sigma,\nu):=\min\{\mu_1,\mu_2,\mu_3\}$, where
\begin{align*}
\mu_1(\sigma,\nu)&=\frac{\sigma^2}{2}+2\sigma-2+2\sigma \beta(\sigma,\nu),\\
\mu_2(\sigma,\nu)&=\frac{\sigma^2}{2}+\frac{(\min\{\nu,3/2\}+1)\sigma}{2}-1+(2\sigma-1)\beta(\sigma,\nu),\\
\mu_3(\sigma,\nu)&=\frac{5\sigma}{4}-1+\beta(\sigma,\nu).
\end{align*}
Then
\begin{align*}
2/\sqrt7&=\inf\{\sigma\in (1/2,1)\ |\ \mu(\sigma,\sigma/(\sigma-1/2))>0,\ \beta(\sigma,\sigma/(\sigma-1/2))>0\},\\
\nu_*(\sigma)&:=\inf\{\nu\in (1,\sigma/(\sigma-1/2))\ |\ \mu(\sigma,\nu)>0,\ \beta(\sigma,\nu)>0\}.
\end{align*}
More precisely, if $\nu=\sigma/(\sigma-1/2)$, then $\mu(\sigma,\sigma/(\sigma-1/2))=\mu_1(\sigma,\sigma/(\sigma-1/2))=7\sigma^2/2-2$ and $\beta(\sigma,\sigma/(\sigma-1/2))=3\sigma/2-1$. Thus the lower bound $2/\sqrt{7}$ is determined as the positive root of $\mu(\sigma,\sigma/(\sigma-1/2))=0$. 
This lower bound is due to technical limitations of our method. In view of \cite{HKN} and by analogy with modified scattering for the Hartree equations, when $d=1$, $\sigma=1/2$ should be the optimal lower bound for modified scattering with the above asymptotic profile \eqref{u_p}; see also Remark \ref{remark_SeWu} below. We however stress that our argument is completely different from that of \cite{HKN}. In particular, neither structural conditions nor an analyticity are required for scattering data $u_+$. Moreover, as mentioned above, it is difficult to apply directly the existing methods for the Hartree equations based on the smoothing property of the convolution to \eqref{NLS}. 
\end{remark}

\begin{remark}
\label{remark_SeWu}
During the preparation of this article, the very recent preprint \cite{SeWu} appeared, which established the existence of modified wave operators for \eqref{NLS} with $d\ge1$, $0<\sigma<1/d$,  $\lambda\in \R$ and sufficiently small, non-vanishing scattering data $u_+$ in an analytic function space. More precisely, let $A^6_\rho$ be a weighted analytic function space defined through the norm 
$$
\|f\|_{A^6_\rho}=\int_{\R^d}e^{6\<\xi\>}\<\xi\>^\rho |\widehat f(\xi)|d\xi. 
$$
The scattering data in \cite{SeWu} is assumed to be of the form $u_+=e^{\Psi}$ with smooth $\Psi$ satisfying $u_+\in L^2\cap A^6_\rho$, $\nabla \Psi\in A^6_\rho$ and the smallness condition $\|u_+\|_{L^2\cap A^6_\rho}+\|e^{\sigma \Re\Psi}\|_{A^6_\rho}\ll1$ with some $\rho>0$. In particular, $u_+$ is nowhere vanishing. Then they showed the modified scattering in $L^2$, thereby constructing the modified wave operator $W_{\sigma,+}(u_+)\in L^2$. 

 The method in \cite{SeWu} successfully constructs the modified wave operators for all $d\ge1$, $0<\sigma<1/d$ and $\lambda\in \R$ under the above assumptions on the scattering data. Notably,  they establish the relevant asymptotic profile for $0<\sigma\le 1/(2d)$ and a mechanism to determine it, both of which were previously unknown in the literature.
 Compared with their result, our admissible range of $\sigma$ is narrow and we consider the one-dimensional defocusing case only. However, the advantage of our result is that we treat general scattering data $u_+\in \mathcal F H^\nu(\R)$ without either smallness or non-vanishing conditions, besides we construct the modified wave operator not only on $L^2(\R)$ but also on $\mathcal F H^1(\R)$ which particularly yields the sharp decay estimates \eqref{remark_1_2}. Furthermore, we deal with the difficulty caused by  the low regularity at $u=0$ of the subcritical nonlinearity $|u|^{2\sigma}u$ which does not appear under non-vanishing conditions. 
 
 The two approaches pursue fundamentally different objectives and employ distinct methodologies in the construction of modified wave operators for subcritical nonlinearities. While the approach of \cite{SeWu} succeeds in treating the full range of exponents and general space dimensions at the cost of imposing above restrictions on the scattering data, our approach, by restricting the admissible dimensions and exponent range, enables us to handle a broader class of large scattering data. Thus, the two works address different aspects of the problem.
 \end{remark}

\begin{remark}We list further remarks on Theorem \ref{theorem_1}. 
\label{remark_theorem_1}
\begin{enumerate}
\item Precisely speaking, the uniqueness in Theorem \ref{theorem_1} means that if $u_1,u_2\in C(\R, L^2(\R))$ are two solutions to \eqref{NLS} satisfying $e^{-it\Delta/2}u_j\in C(\R, \mathcal F H^1(\R))$ for $j=1,2$ and \eqref{theorem_1_1} for any $\delta>0$ with the same asymptotic profile $u_{\mathrm p,+}(t)$, then $u_1\equiv u_2$. 
\item By the time reversal symmetry of \eqref{NLS}, the analogous statement also holds for the negative time direction $t\to -\infty$. Namely, for any $u_-\in \F H^{\nu}(\R)$ there exists a unique global solution $u\in C(\R,L^2(\R))$ to \eqref{NLS} satisfying $e^{-it\Delta/2}u\in C(\R, \mathcal F H^1(\R))$ and \eqref{theorem_1_1} with $u_{\mathrm p,+}$ replaced by $u_{\mathrm p,-}$ as $t\to -\infty$. In particular, the negative time modified wave operator $W_{\sigma,-}:\F H^{\nu}(\R)\ni u_-\mapsto u(0)\in \mathcal F H^1(\R)$  exists. 
\item NLS \eqref{NLS} is invariant under the scaling 
$
(0,\infty)\ni \rho\mapsto u_\rho(t,x)=\rho^{\frac1\sigma}u(\rho^2 t,\rho x),
$  
that is, $u$ satisfies \eqref{NLS} if and only if $u_\rho$ does. Moreover, we have $
\|\F(f_\rho) \|_{L^\infty}=\rho^{\frac1\sigma-1}\|\widehat{f}\|_{L^\infty}
$. Thus, it could be possible to assume without loss of generality that $\|\widehat{u_+}\|_{L^\infty}$ is sufficiently small as long as $\sigma\neq 1$. However, this does not reduce at all the difficulty of the subcritical case $\sigma<1$. To explain the difficulty, we observe that $u-u_{\mathrm p,+}$ satisfies, at least formally, 
$$
i\partial_t (u-u_{\mathrm p,+})+\frac12\Delta (u-u_{\mathrm p,+})=\lambda |u|^{2\sigma} u-\lambda |u_{\mathrm p,+}|^{2\sigma}u_{\mathrm p,+}+\text{(error)}
$$
with some error term \text{(error)} depending only on $u_{\mathrm p}$ which we neglect in the following observation for simplicity. Since $|u_{\mathrm p,+}|\le t^{-1/2}\|\widehat{u_+}\|_{L^\infty}$, one can expect
\begin{align*}
\frac{d}{ds}\|u-u_{\mathrm p,+}\|_{L^2}^2
&\sim 2\lambda \Im\<|u|^{2\sigma} u-|u_{\mathrm p,+}|^{2\sigma}u_{\mathrm p,+},u-u_{\mathrm p,+}\>\\
&\lesssim t^{-\sigma}|\lambda|\|\widehat{u_+}\|_{L^\infty}^{2\sigma}\|u-u_{\mathrm p,+}\|^2+\|u-u_{\mathrm p,+}\|_{L^{4\sigma+4}}^{2\sigma+2},
\end{align*}
where the first term $ t^{-\sigma}\|\widehat{u_+}\|_{L^\infty}^{2\sigma}\|u-u_{\mathrm p,+}\|^2$ is expected to be the worst part regarding the decay rate in $t$ as we seek a solution satisfying $\|u-u_{\mathrm p,+}\|_{H^1}\lesssim t^{-\beta}$ and $\|u\|_{L^\infty}\lesssim t^{-1/2}$ as $t\to \infty$. If either that $\sigma>1$ or that $\sigma=1$ and $|\lambda|\|\widehat{u_+}\|_{L^\infty}^2$ is small enough, then it is in fact possible to construct  a unique global solution $u$ satisfying $\|u-u_{\mathrm p,+}\|_{L^2}\lesssim t^{-\beta}$ with some $\beta>0$ by using this formulation (\cite{Ozawa_1991}). We will give a brief explanation on the argument for the small-data critical case in the beginning of Section \ref{subsection_energy}. However, this  argument does not work for the subcritical case as the first term is no longer a small perturbation regardless of the size of $\|\widehat{u_+}\|_{L^\infty}$ since the decay rate $t^{-\sigma}$ is too weak. Overcoming this difficulty is the most novel aspect of the present paper.
\item It would be interesting to investigate whether a strategy similar to the one developed in the present paper can be applied to establish modified scattering for other subcritical nonlinear dispersive equations, in particular, for the NLS \eqref{NLS} in higher dimensions and for the Hartree  equation in $d=1$ with the nonlinearity $(|x|^{-\sigma}*|u|^2)u$ for some $\sigma<1$. 
\end{enumerate}
\end{remark}

\subsection{Pseudo-conformal transformation}
In what follows, we write for short
$$u_\mathrm p=u_{\mathrm p,+},\quad \varphi=\overline{\widehat{u_+}},\quad \|f\|=\|f\|_{L^2(\R)},\quad \<f,g\>=\int_\R f(x)\overline{g(x)}dx.$$
It is known that the Cauchy problem for \eqref{NLS} is globally well-posed in $L^2(\R)$ (\cite{Tsutsumi_1987}), and that if $e^{-it_0\Delta/2}u(t_1)\in \F H^1(\R)$ for some $t_1\in \R$, then $e^{-it\Delta/2}u\in C(\R,\mathcal FH^1(\R))$; see e.g. \cite[Proposition 2.2]{Masaki_2017} where a simple proof can be found. In order to prove Theorem \ref{theorem_1}, it is therefore sufficient to show the unique existence of the solution $u\in C([t_1,\infty),L^2(\R))$ to \eqref{NLS} with some fixed $t_1>0$ that satisfies $e^{-it\Delta/2}u\in C([t_1,\infty),\mathcal FH^1(\R))$ and \eqref{theorem_1_1} uniformly in $t\ge t_1$. 

One of the primary tools in our argument is the pseudo-conformal transformation. The pseudo-conformal transforms of $u$ and $u_{\mathrm p}$ are defined by
\begin{align}
\label{v}
v(s,x)&=\mathop{\mathrm{PC}}[u](s,x):=\overline{(\mathcal M(t)\mathcal D(t))^{-1}u(t,x)}\big|_{t=\frac1s}=(is)^{-\frac 12}e^{\frac{i|x|^2}{2s}}\overline u\left(\frac1s,\frac x s\right),
\\ 
\label{v_p}
v_{\mathrm p}(s,x)&=\mathop{\mathrm{PC}}[u_{\mathrm p}](s,x)
={\varphi(x)}\exp\left(-\frac{ i \lambda |\varphi(x)|^{2\sigma}s^{\sigma-1}}{\sigma-1}\right).
\end{align}
By virtue of the formula $i\partial_t+\frac12\Delta=e^{it\Delta/2}i\partial_t e^{-is\Delta/2}$, \eqref{Dollard} and a direct calculation, \eqref{NLS} and \eqref{NLS_w_p} for $t\ge t_1$ are transformed into new equations
\begin{align}
\label{NLS_v}
(i\partial_s+\frac12\Delta)v&=\lambda s^{\sigma-2}|v|^{2\sigma}v,\quad 0<s\le s_1,\\
\label{NLS_v_p}
i\partial_s v_{\mathrm p}&=\lambda s^{\sigma-2}|v_{\mathrm p}|^{2\sigma}v_{\mathrm p},\quad 0<s\le s_1,
\end{align}
respectively, where $s_1=t_1^{-1}$.  Moreover, we have the following properties: 

\begin{lemma}
\label{lemma_PC}
Suppose $f\in C([t_1,\infty),L^2(\R))$ and $e^{-it\Delta/2}f\in C([t_1,\infty),\F H^1(\R))$. Then
\begin{align}
\label{lemma_PC_1}
\|f(t)\|&=\|\mathop{\mathrm{PC}}[f](s)\|,\\
\label{lemma_PC_2}
\|xe^{-it\Delta/2}f(t)\|&=\|\nabla \mathop{\mathrm{PC}}[f](s)\|,\\
\label{lemma_PC_3}
\left\|\left|\varphi\left(\frac xt\right)\right|^{\sigma-1}\Re[\overline{u_{\mathrm p}(t)} f(t)]\right\|&=s^{1/2}\left\||\varphi|^{\sigma-1}\Re[\overline{v_{\mathrm p}(s)}\mathop{\mathrm{PC}}[f](s)\right\|,
\end{align}
where $s=t^{-1}$. 
In particular, $\mathop{\mathrm{PC}}[f]\in C((0,s_1],H^1(\R))$. 
\end{lemma}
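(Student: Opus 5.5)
The plan is to prove all three identities by direct computation from the definition \eqref{v}, using a change of variables $x\mapsto x/s$ and the Dollard factorization \eqref{Dollard}. Write $g(t)=(\mathcal M(t)\mathcal D(t))^{-1}f(t)$, so that $\mathop{\mathrm{PC}}[f](s)=\overline{g(1/s)}$. Since $\mathcal M(t)$ is a unimodular multiplier and $\mathcal D(t)$ is unitary on $L^2(\R)$ (the factor $|it|^{-1/2}$ exactly compensates the Jacobian of $x\mapsto x/t$), $(\mathcal M\mathcal D)^{-1}$ is unitary. Complex conjugation preserves the norm, so \eqref{lemma_PC_1} follows immediately.

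For \eqref{lemma_PC_2}, I would use \eqref{Dollard} in the form $e^{-it\Delta/2}=\mathcal F^{-1}\mathcal M(t)^{-1}\cdot$ composed appropriately. It is cleaner to take $e^{-it\Delta/2}=(e^{it\Delta/2})^{-1}=\mathcal M(t)^{-1}\mathcal F^{-1}\mathcal D(t)^{-1}\mathcal M(t)^{-1}$. Then
\[
xe^{-it\Delta/2}f=\mathcal M(t)^{-1}x\mathcal F^{-1}g(t),
\]
because $x$ commutes with the multiplier $\mathcal M(t)^{-1}$. Hence $\|xe^{-it\Delta/2}f\|=\|x\mathcal F^{-1}g\|=\|\nabla g\|$ by Plancherel, since $x\mathcal F^{-1}=\mathcal F^{-1}(-i\nabla)$ up to sign convention. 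Conjugation preserves $\|\nabla\cdot\|$, which gives \eqref{lemma_PC_2} with $s=1/t$. The same identity also shows that $\mathop{\mathrm{PC}}[f](s)\in H^1$ exactly when $e^{-it\Delta/2}f\in\F H^1$. Continuity in $s\in(0,s_1]$ follows by combining the strong continuity of $t\mapsto\mathcal M(t),\mathcal D(t)$ on $L^2$ with the hypothesis $e^{-it\Delta/2}f\in C(\F H^1)$: the map $g(t)=\mathcal M(t)^{-1}\mathcal F\,\mathcal M(t)\,[e^{-it\Delta/2}f]$ (rearranged from \eqref{Dollard}) is continuous in $H^1$, since multiplication by $\mathcal M(t)^{\pm1}$ is continuous in $t$ on $\F H^1$ and $H^1$ when acting on functions with the relevant weight. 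I expect this continuity bookkeeping to be the only mildly delicate point. I would handle it by writing $\mathcal F\mathcal M(t)\mathcal F^{-1}=e^{-i\Delta/(2t)}$-type unitary groups, which are strongly continuous on $H^1$.

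For \eqref{lemma_PC_3}, I would compute pointwise. Since $\mathcal M(t)$ and the phase $i^{-1/2}$ cancel in $\overline{u_{\mathrm p}}f$, we get $\overline{u_{\mathrm p}(t,x)}f(t,x)=t^{-1}\overline{\widetilde u_{\mathrm p}}\,\widetilde f\,(x/t)$, where $\widetilde h=(\mathcal M\mathcal D)^{-1}h$. With $\widetilde u_{\mathrm p}=w_{\mathrm p}=\overline{v_{\mathrm p}}$ and $\widetilde f=\overline{\mathop{\mathrm{PC}}[f]}$, the real part of $\overline{w_{\mathrm p}}\,\overline{\mathop{\mathrm{PC}}[f]}$ equals $\Re[\overline{v_{\mathrm p}}\mathop{\mathrm{PC}}[f]]$ after conjugation. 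Moreover $|\varphi(x/t)|$ is exactly evaluation at $x/t$. Changing variables $y=x/t$ in the $L^2$ integral then yields
\[
t^{-1}\cdot t^{1/2}=s^{1/2}
\]
times the right-hand side, which is \eqref{lemma_PC_3}.
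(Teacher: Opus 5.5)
Your proposal is correct and follows the paper's proof essentially line by line: unitarity of $\mathcal M(t)\mathcal D(t)$ for \eqref{lemma_PC_1}, the inverted Dollard factorization $e^{-it\Delta/2}=\mathcal M(t)^{-1}\mathcal F^{-1}\mathcal D(t)^{-1}\mathcal M(t)^{-1}$ plus Plancherel for \eqref{lemma_PC_2}, and pointwise cancellation of the modulations followed by the change of variables $x\mapsto x/t$ for \eqref{lemma_PC_3}. The only slip is in your continuity remark: inverting your own identity $e^{-it\Delta/2}f=\mathcal M(t)^{-1}\mathcal F^{-1}g$ gives $g(t)=\mathcal F\mathcal M(t)e^{-it\Delta/2}f(t)=e^{-i\Delta/(2t)}\mathcal F e^{-it\Delta/2}f(t)$ with no leading $\mathcal M(t)^{-1}$ (multiplication by $\mathcal M(t)^{\pm1}$ is not bounded on $H^1$), and with this corrected formula your unitary-group argument does give $\mathop{\mathrm{PC}}[f]\in C((0,s_1],H^1(\R))$, which the paper only asserts.
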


\begin{proof}
\eqref{lemma_PC_1} follows by the unitarity of $\mathcal M(t)$ and $\mathcal D(t)$ on $L^2(\R)$. \eqref{lemma_PC_2} follows from \eqref{Dollard}  as
$$
\| xe^{-it\Delta/2}f(t)\|=\|x\mathcal M(t)^{-1}\mathcal F^{-1}\mathcal D(t)^{-1}\mathcal M(t)^{-1}f(t)\|=\|\nabla \mathop{\mathrm{PC}}[f](s)\|.
$$
To prove \eqref{lemma_PC_3}, we let $f(t)=\mathcal M(t)\mathcal D(t)\overline g(t)$ and $u_{\mathrm p}(t)=\mathcal M(t)\mathcal D(t)\overline{u_1}(t)$ for short. Since
$$
\overline{\mathcal M(t)}=e^{-\frac{i|x|^2}{2t}},\quad \overline{\mathcal D(t)h}=\overline{(it)^{-1/2}h(x/t)}=i^{1/2}t^{-1/2}\overline{h(x/t)}
$$
for any $h\in L^2(\R)$, we can write
\begin{align*}
&\left\|\left|\varphi\left(\frac xt\right)\right|^{\sigma-1}\Re[\overline{u_{\mathrm p}(t)}f(t)]\right\|\\
&=\left\|\left|\varphi\left(\frac xt\right)\right|^{\sigma-1}\Re\left[e^{-\frac{i|x|^2}{2t}}i^{1/2}t^{-1/2}u_1\left(\frac xt\right)e^{\frac{i|x|^2}{2t}}(it)^{-1/2}\overline g\left(\frac xt\right)\right]\right\|\\
&=t^{-1/2}\left\|\mathcal D(t)|\varphi|^{\sigma-1}\Re\left[u_1(t)\overline{g(t)}\right]\right\|
=s^{1/2}\left\||\varphi|^{\sigma-1}\Re\left[\overline{v_{\mathrm p}(s)}\mathop{\mathrm{PC}}[f](s)\right]\right\|.
\end{align*}
This completes the proof. 
\end{proof}

By \eqref{NLS_v} and this lemma, $u\in C([t_1,\infty),L^2(\R))$ satisfies $e^{-it\Delta/2}u\in C([t_1,\infty),\mathcal FH^1(\R))$ and $$u(t)=e^{i(t-t_1)\Delta/2}u(t_1)-i\int_{t_1}^te^{i(t-s)\Delta/2}\lambda |u(s)|^{2\sigma}u(s) ds,\quad t\ge t_1,$$ if and only if its pseudo-conformal transform $v\in C((0,s_1],H^1(\R))$ satisfies $$v(s)=e^{i(s-s_1)\Delta/2}v(s_1)-i\int_{s_1}^se^{i(s-r)\Delta/2}\lambda r^{\sigma-2} |v(r)|^{2\sigma}v(r) dr,\quad 0<s\le s_1.$$
Note that these Duhamel formulas are well-defined on $L^2(\R)$  under this setting  since, by using the estimate $\|e^{it\Delta/2}(1+|x|)^{-1}\|_{L^2\to L^\infty}\lesssim t^{-1/2}$ and the embedding $H^1(\R)\subset L^\infty(\R)$, we have $u\in C([t_1,\infty),L^\infty(\R))$ and $v\in C((0,s_1],L^\infty(\R))$ and hence $|u|^{2\sigma}u\in C_tL^2_x$ and $|v|^{2\sigma}v\in C_sL^2_x$. 

Now we introduce the following modified energy, which plays a crucial role in this paper: 
\begin{align}
\label{Q}
Q[f](s):=\left(\frac{\|\nabla f\|^2}{4}+ s^{-\sigma+\delta} \|f\|^2+\sigma\lambda  s^{\sigma-2}\||\varphi|^{\sigma-1}\Re[\overline {v_{\mathrm p}(s)}f]\|^2\right)^{1/2}.
\end{align}
Note that, thanks to the defocusing assumption $\lambda>0$, $Q[f]$ is positive and equivalent with the $H^1$-norm $\|f\|_{H^1}$ for each $s>0$. 
It follows from Lemma \ref{lemma_PC} that $u$ satisfies \eqref{theorem_1_1} if and only if 
\begin{align}
Q[v-v_{\mathrm p}](s)\lesssim s^{\beta-\delta},\quad 0<s\le s_1.
\label{theorem_v_1}
\end{align}
In conclusion, we see that Theorem \ref{theorem_1} follows from the following theorem for \eqref{NLS_v}:
\begin{theorem}	
\label{theorem_v}
Under the assumption in Theorem \ref{theorem_1}, there exists $s_1>0$ and a unique solution $v\in C((0,s_1],H^1(\R))$ to \eqref{NLS_v} satisfying \eqref{theorem_v_1}. 
\end{theorem}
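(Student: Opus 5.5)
We set $w=v-v_{\mathrm p}$. By \eqref{NLS_v} and \eqref{NLS_v_p}, $w$ solves
\[
(i\partial_s+\tfrac12\Delta)w=\lambda s^{\sigma-2}\big(|v_{\mathrm p}+w|^{2\sigma}(v_{\mathrm p}+w)-|v_{\mathrm p}|^{2\sigma}v_{\mathrm p}\big)-\tfrac12\Delta v_{\mathrm p}.
\]
We then linearize around $v_{\mathrm p}$. The leading part of the nonlinear difference is
\[
L w:=\lambda s^{\sigma-2}|v_{\mathrm p}|^{2\sigma}\big((\sigma+1)w+\sigma v_{\mathrm p}^2|v_{\mathrm p}|^{-2}\overline w\big)
=\lambda s^{\sigma-2}\Big(|v_{\mathrm p}|^{2\sigma}w+2\sigma|v_{\mathrm p}|^{2\sigma-2}v_{\mathrm p}\Re[\overline{v_{\mathrm p}}w]\Big),
\]
and we treat it as a time-dependent potential. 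The equation becomes $(i\partial_s+\frac12\Delta)w=Lw+N(w)+E$. Here $N(w)$ is the superlinear remainder, and $E=-\frac12\Delta v_{\mathrm p}$ is the error term. Since $|v_{\mathrm p}|=|\varphi|$, the potential is $s$-dependent only through $s^{\sigma-2}$ and the phase of $v_{\mathrm p}$.

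The core step is an energy identity for $Q[w](s)^2$ from \eqref{Q}. The term $\frac14\|\nabla w\|^2$ is differentiated using the equation. The symmetric part $\lambda s^{\sigma-2}|\varphi|^{2\sigma}w$ contributes nothing to $\frac{d}{ds}\|w\|^2$. The non-self-adjoint part $2\sigma\lambda s^{\sigma-2}|\varphi|^{2\sigma-2}v_{\mathrm p}\Re[\overline{v_{\mathrm p}}w]$ produces, in $\frac{d}{ds}\|\nabla w\|^2$, a cross term. We plan to show that this cross term is exactly cancelled, up to lower order, by $\frac{d}{ds}$ of the third term $\sigma\lambda s^{\sigma-2}\||\varphi|^{\sigma-1}\Re[\overline{v_{\mathrm p}}w]\|^2$. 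The mechanism is that $\partial_s\Re[\overline{v_{\mathrm p}}w]=\Re[\overline{v_{\mathrm p}}\,\partial_s w]+\dots$, and the identity $\partial_s w=\frac i2\Delta w-iLw+\dots$ exchanges $\Delta$ against $\nabla$ by integration by parts. This is the $\sigma$-analogue of the cubic computation in \cite{KaMi}.

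What survives from this cancellation are commutator terms where derivatives hit $v_{\mathrm p}$ or $|\varphi|^{\sigma-1}$. They carry a factor $\nabla v_{\mathrm p}\sim s^{\sigma-1}\nabla(|\varphi|^{2\sigma})\varphi$, which grows polynomially as $s\to0$. We also get $-(\sigma-2)$-type terms from differentiating $s^{\sigma-2}$, which have a good sign since the weight increases as $s\downarrow0$ and we integrate backward from $s$ to $0$. The weight $s^{-\sigma+\delta}\|w\|^2$ is chosen precisely so that the dangerous linear term $s^{\sigma-2}\|\varphi\|_\infty^{2\sigma}\|w\|^2$ is absorbed: it is bounded by the $\Re[\overline{v_{\mathrm p}}w]$ component, controlled by the third term of $Q$, plus the imaginary component, whose contribution vanishes by the antisymmetry noted above.

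For the remaining terms we aim for a differential inequality
\[
\Big|\frac{d}{ds}Q[w]^2\Big|\lesssim s^{-1+\mu}Q[w]^2+Q[w]\,\|E\|_{\text{energy}}+\text{(superlinear)},
\]
with $\mu=\mu(\sigma,\nu)>0$ as in Remark~\ref{remark_2}. For $N(w)$ we use $|N(w)|\lesssim |w|^{2\sigma}(|w|+|\varphi|)$ together with the non-smoothness structure. The key is the crucial cancellation: the $\nabla$ of the nonlinear difference, paired with $\nabla w$, splits into a piece proportional to $\Re[\overline{v_{\mathrm p}}w]$, controlled by the third term of $Q$ and the weight $|\varphi|^{\sigma-1}$ (this is why $|\varphi|^{\sigma-1}$ appears), and higher powers of $w$, estimated via $\|w\|_{L^\infty}\lesssim\|w\|^{1/2}\|\nabla w\|^{1/2}$. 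The error $E$ is estimated with $\varphi\in H^\nu$, interpolating fractional derivatives of $\exp(c|\varphi|^{2\sigma}s^{\sigma-1})$. This gives $\|E\|\lesssim s^{\beta-1}$-type bounds, which fixes $\beta(\sigma,\nu)$ through the two regimes $\nu<2$ and $\nu\ge2$. The exponents $\mu_1,\mu_2,\mu_3$ come from the three kinds of terms.

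Existence then follows by solving \eqref{NLS_v} backward from $s=\ep$ with data $v(\ep)=v_{\mathrm p}(\ep)$. This is globally well-posed in $H^1$. A bootstrap with the above inequality and Gronwall, integrated from $\ep$ to $s$, gives $Q[w_\ep](s)\lesssim s^{\beta-\delta}$ uniformly in $\ep$, provided $s_1$ is small. Taking a weak-$*$ limit $\ep\to0$ in $L^\infty H^1$, with local strong convergence by an $L^2$ difference estimate, yields the solution. Uniqueness follows by applying the same modified energy to the difference of two solutions: the linearized structure is identical, and the decay $s^{\beta-\delta}$ forces the difference to vanish via Gronwall from $s=0$.

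The main obstacle is establishing the cancellation identity for the linearized potential in the modified energy. We must check that all remaining terms gain a positive power $s^{\mu}$ despite the polynomial growth $s^{\sigma-1}$ of the phase. We must also handle the singular weight $|\varphi|^{\sigma-1}$ where $\varphi$ vanishes. There we would try writing the differences as $|\varphi|^{\sigma-1}\times$ a smooth function of $v_{\mathrm p}$, using the non-negativity of the third term.
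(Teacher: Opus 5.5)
\textbf{Verdict: genuine gap.} The overall architecture matches the paper: linearize around $v_{\mathrm p}$, absorb $(\sigma+1)|\varphi|^{2\sigma}w+\sigma|\varphi|^{2\sigma-2}v_{\mathrm p}^2\overline w$ as a potential, use the modified energy $Q$, build approximate solutions from $s=\ep$, pass to a compactness limit, and get uniqueness by Gronwall from $0$. The gap is in how you treat the error term.

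\textbf{The error term is not in the energy space.} You keep the defect of $v_{\mathrm p}$ as the forcing $E=-\frac12\Delta v_{\mathrm p}$. The modified energy estimate needs the forcing in the energy space, i.e.\ $Q[iE](r)$ finite. That involves $\|\nabla\Delta v_{\mathrm p}\|$, so essentially $\varphi\in H^3$; even $E\in L^2$ already requires $\varphi\in H^2$. Theorem \ref{theorem_1} only assumes $\varphi\in H^\nu$ with $\nu>\nu_*(\sigma)$. Here $\nu_*(\sigma)<\sigma/(\sigma-1/2)<3$ always, and $\nu_*(\sigma)<2$ for $\sigma>6/7$.

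Interpolating fractional derivatives of the phase cannot rescue this. The leading part of $E$ is $\frac12\gamma^2 e^{i\gamma\Phi}\varphi|\nabla\Phi|^2$, with $\gamma\sim s^{\sigma-1}$ and $\Phi=|\varphi|^{2\sigma}$. It has size exactly $s^{2\sigma-2}$ whatever $\nu$ is. Nothing in your formulation trades derivatives of $\varphi$ for powers of $s$, so the $\nu$-dependent rate $\beta(\sigma,\nu)$ and its two regimes cannot come out. At best your scheme covers smooth data (cf.\ the $\F H^3$ corollary in Remark \ref{remark_2}), not the stated theorem.

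\textbf{The missing idea} is to take $e^{is\Delta/2}v_{\mathrm p}=v_{\mathrm p}+e_1$ as the approximate solution, with $e_1=\mathcal R(s)v_{\mathrm p}$ and $\mathcal R(s)=e^{is\Delta/2}-I$.
\begin{itemize}
\item Since $(i\partial_s+\frac12\Delta)e^{is\Delta/2}v_{\mathrm p}=\lambda s^{\sigma-2}e^{is\Delta/2}|\varphi|^{2\sigma}v_{\mathrm p}$, the $\Delta v_{\mathrm p}$ term disappears. The defect becomes $\lambda s^{\sigma-2}e_2$ with $e_2=[\Phi,\mathcal R]v_{\mathrm p}+2\sigma|\varphi|^{2\sigma-2}v_{\mathrm p}\Re[\overline{v_{\mathrm p}}\mathcal R v_{\mathrm p}]$; see \eqref{IE_1} and \eqref{e_6}.
\item The bound $\|\mathcal R(s)f\|_{\dot H^r}\lesssim s^{\alpha/2}\|f\|_{\dot H^{r+\alpha}}$ (Lemma \ref{lemma_R}) combined with $\|e^{i\gamma\Phi}f\|_{H^\nu}\lesssim\gamma^\nu\|f\|_{H^\nu}$ (Lemma \ref{lemma_v_p}) gives the gain $s^{\nu(\sigma-1/2)}$.
\item The structural cancellations finish the job: $\Re[\overline{v_{\mathrm p}}\tfrac{is}{2}\Delta v_{\mathrm p}]$ contains no $\gamma^2$ term \eqref{e_10}, and $\Im[\overline{v_{\mathrm p}}\mathrm I_3]=\Im[\overline{v_{\mathrm p}}\mathrm I_4]=0$.
\end{itemize}
This is exactly where $\beta(\sigma,\nu)$ comes from (Proposition \ref{proposition_e}).

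\textbf{Linear energy identity.} You expect surviving terms carrying $\nabla v_{\mathrm p}\sim s^{\sigma-1}$. Such terms would be of size $s^{2\sigma-3}\|\nabla\psi\|\|\psi\|$. The derivative $\frac{d}{ds}Q^2$ has no coercive $\|\nabla\psi\|^2$ term; its only good terms are $-(\sigma-\delta)s^{-\sigma-1+\delta}\|\psi\|^2$ and $-(2-\sigma)\sigma\lambda s^{\sigma-3}\||\varphi|^{\sigma-1}\Re[\overline{v_{\mathrm p}}\psi]\|^2$. Absorbing such terms would therefore force $\sigma>4/5$.

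In fact the exact identity \eqref{eq_E_ep}, obtained by pairing the equation with $\partial_s\psi$, contains no $\gamma$ at all. Its only indefinite terms are:
\begin{itemize}
\item the cross term $-4\sigma\lambda s^{-2+\delta}\langle|\varphi|^{2\sigma-2}\Re[\overline{v_{\mathrm p}}\psi],\Im[\overline{v_{\mathrm p}}\psi]\rangle$. This does not vanish, contrary to what you say; it is absorbed by the two negative terms via a geometric mean.
\item the term $-\sigma\lambda s^{\sigma-2}\Im\langle\nabla\psi,|\varphi|^{2\sigma-2}\Re[\overline\varphi\nabla\varphi]\psi\rangle$, handled by Gagliardo--Nirenberg under $(3+2\nu)\sigma>4$.
\end{itemize}

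\textbf{Nonlinear term.} The decisive cancellation is not in the gradient but in $\Im[\overline{v_{\mathrm p}}G]$; see \eqref{proposition_nonlinear_proof_4_0}. It is needed because that component of $Q[iG]$ carries the weight $s^{\sigma/2-1}$. The gradient part instead requires the gauge $e^{-i\gamma\Phi}$ and the equivalence $\widetilde Q\sim Q$ of Lemma \ref{lemma_nonlinear}, which you do not mention.

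\textbf{Strong continuity.} An $L^2$ difference estimate plus weak-$*$ compactness does not by itself give $v\in C((0,s_1],H^1(\R))$. The paper obtains strong $H^1$-continuity from the pseudo-conformal law \eqref{law}.
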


The rest of the paper is devoted to the proof of this theorem. Before going into the details, we briefly outline the main idea. The existence and uniqueness of $v$ are shown separately. 

For the existence part, we consider a family $\{v_\ep\}_\ep$  of solutions to \eqref{NLS_v} defined for $0<\ep \le s\ll1$ with the initial datum $v_\ep(\ep,x)=v_{\mathrm p}(\ep,x)$ at $s=\ep$. The unique existence of $v_\ep$ for each $\ep$ is easily verified since we are away from the singularity at $s=0$. Then the main step is to establish \eqref{theorem_v_1} for $w_\ep:=v_{\ep}-v_{\mathrm p}$ uniformly in $\ep,s$. Once we have such a uniform energy estimate, the desired solution $v$ can be constructed by a standard energy method. 

The proof of the energy estimate for $w_\ep$ relies on two main ingredients. We linearize \eqref{NLS_v} around $v_{\mathrm p}$ to incorporate the leading part of the nonlinearity into the linear part as a linear potential term. Using the notation $\vv a=(a,\overline  a)^{\mathrm T}$, the linearization leads an integral equation for $\vv{w_\ep}$ of the form
$$
\vv{w_{\ep}}(s)=\vv{e_1}(s)+\mathcal U(s,\ep)\vv{e_1}(\ep)-\lambda \int_\ep^s  r^{\sigma-2}\mathcal  U(s,r)\left(\vv {(iG)}+\vv{(i e_2)}\right)(r)dr, 
$$
where $\mathcal U(s,r)$ is the propagator generated by the linearized equation, $G$ is the new error term and $e_j$ are some error terms arising from the linearization such that $G,e_j$ are less singular at $s=0$ than the original nonlinear term. Following our previous work \cite{KaMi} for the cubic case $\sigma=1$, we then show the following modified  energy estimate for $\mathcal U(s,r)$: 
$$
Q[\mathcal U(s,r)\vv{\psi_0}](s)\lesssim Q[\vv{\psi_0}](r),\quad\psi_0\in H^1(\R),\quad 0<\ep\le r<s\ll1,
$$
where $Q[\vv{f}]:=Q[f]$. It is worth emphasizing that the linearized potential is non-symmetric,  time-dependent and of order $s^{\rho-2}$ as $s\to 0$. In particular, it is of long-range type in the sense that $\mathcal U(s,r)$ cannot be approximated by the free propagator uniformly in $0<r\le s\ll1$. Therefore,  even the uniform-in-time $L^2$-boundedness of $\mathcal U(s,r)$ is highly non-trivial. Indeed, although the proof of this energy estimate for $\mathcal U(s,r)$ itself is a simple application of integration by parts (or a weighted energy method), the specific structure of the energy $Q[f]$ plays a crucial role in handling several terms coming from the linearized potential. 

The other main ingredient is the use of specific structures of the nonlinear term $G$ and the error terms  $e_1,e_2$ emerging after the linearization. The structures yield crucial cancellations when handling the polynomial growth in $s$ as $s\to 0$ of the derivatives of the nonlinear phase correction term $e^{i\gamma(1/s)|\varphi|^{2\sigma}}$ in the modified energy estimates for $Q[iG], Q[e_1]$ and $Q[ie_2]$. 
This step was not necessary in the critical case, as the growth rate is only logarithmic, and plays an important role to deal with the subcritical range ${2}/{\sqrt7}<\sigma<1$. This is the major technical difference from our previous work \cite{KaMi} on the critical case. 

For the proof of the uniqueness, setting $W=v_1-v_2$ with two solutions $v_1,v_2$ satisfying \eqref{theorem_v_1}, we essentially repeat the same argument as that of the existence part to derive a relevant integral equation for $W$ and then the following estimate
$$
Q[W](s)
\lesssim \int_{0}^s r^{\beta_1-1}Q[W](r)dr,\quad 0<s\ll1,
$$
with some $\beta_1>0$. This shows $W\equiv0$.

\subsection{Organization of the paper}
The proof of the existence part of Theorem \ref{theorem_v} is given in Section \ref{section_existence}. In Section \ref{subsection_energy}, we prepare the energy estimate for the linearized equation. The nonlinear and error estimates are given in Section \ref{subsection_nonlinear_estimate}. The energy estimate for $w_\ep$ is proved in Section \ref{subsection_existence_3} and we finish the proof of the existence part in Section \ref{subsection_existence_3}. The proof of the uniqueness part is given in Section \ref{section_uniqueness}. In Appendix \ref{appendix_A}, we provide proofs of two technical results. 


\section{Proof of Theorem \ref{theorem_v}: Global existence}
\label{section_existence}

This section is devoted to the proof of the existence part of Theorem \ref{theorem_v}. We start with the following Cauchy problem with the initial condition at $s=\ep>0$:
\begin{equation}
\label{NLS_v_ep}
\left\{\begin{aligned}
(i\partial_s+\frac12\Delta)v_\ep&=\lambda s^{\sigma-2}|v_\ep|^{2\sigma}v_\ep,\quad x\in \R,\quad \ep< s\le 1,\\
v_\ep(\ep,x)&=v_{\mathrm p}(\ep,x),\quad x\in \R.
\end{aligned}
\right.
\end{equation}
The following proposition is the main ingredient in the proof of Theorem \ref{theorem_1}. 

\begin{proposition}
\label{proposition_regularized_1}
For any $\varphi\in H^1(\R)$ and $0<\ep<1$, there exists a unique solution $v_\ep\in C([\ep,1],H^1(\R))$ to \eqref{NLS_v_ep}. Moreover, the following statements hold: 

\begin{itemize}
\item $\|v_\ep(s)\|=\|\varphi\|$ for all $s\in [\ep,1]$. 
\item Define
$
w_\ep:=v_\ep-v_{\mathrm p}
$. Under the assumption in Theorem \ref{theorem_1}, there exists $C,s_0>0$ depending only on $\sigma,\lambda,\delta$ and $\|\varphi\|_{H^\nu}$ such that
\begin{align}
Q[w_\ep](s)
&\le C s^{\beta-\delta},\label{proposition_regularized_1_1}
\\
\|w_\ep(s)-w_{\ep}(s')\|_{H^{-1}}
&\le C |s-s'|^{3\sigma/2-1}
\label{proposition_regularized_1_2}
\end{align}
\end{itemize}
for any $0<\ep\le s'\le s\le s_0$, where the modified energy $Q$ is defined by \eqref{Q}. 
\end{proposition}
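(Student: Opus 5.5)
The proof splits into three parts: local well-posedness with mass conservation, the uniform energy bound \eqref{proposition_regularized_1_1}, and the H\"older bound \eqref{proposition_regularized_1_2}.

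\textbf{Well-posedness and mass.} On $[\ep,1]$ the coefficient $s^{\sigma-2}$ is bounded, so \eqref{NLS_v_ep} is a one-dimensional NLS with a smooth bounded time-dependent coupling. I will use the standard $H^1$ contraction argument, with $H^1(\R)\subset L^\infty(\R)$ and the local Lipschitz bound for $|v|^{2\sigma}v$ on $H^1$ (valid since $2\sigma>1$). The initial datum $v_{\mathrm p}(\ep)$ lies in $H^1$ because $\varphi\in H^1$ and $\nabla v_{\mathrm p}$ involves only $\nabla\varphi$ times powers of $|\varphi|$. Mass conservation $\|v_\ep(s)\|=\|\varphi\|$ comes from pairing the equation with $v_\ep$ and taking imaginary parts. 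Global existence up to $s=1$ follows from the defocusing energy identity
$$\frac{d}{ds}\Big(\tfrac14\|\nabla v_\ep\|^2+\tfrac{\lambda}{2\sigma+2}s^{\sigma-2}\|v_\ep\|_{L^{2\sigma+2}}^{2\sigma+2}\Big)=\tfrac{\lambda(\sigma-2)}{2\sigma+2}s^{\sigma-3}\|v_\ep\|_{L^{2\sigma+2}}^{2\sigma+2},$$
whose right-hand side is bounded on $[\ep,1]$ after Gronwall.

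\textbf{Uniform energy bound.} This is the main step. I write $w_\ep=v_\ep-v_{\mathrm p}$, with $w_\ep(\ep)=0$, and linearize around $v_{\mathrm p}$. This gives
$$(i\partial_s+\tfrac12\Delta)w_\ep=\lambda s^{\sigma-2}\big[(\sigma+1)|v_{\mathrm p}|^{2\sigma}w_\ep+\sigma|v_{\mathrm p}|^{2\sigma-2}v_{\mathrm p}^2\overline{w_\ep}\big]+\lambda s^{\sigma-2}G-\tfrac12\Delta v_{\mathrm p},$$
where $G$ is the superlinear remainder. I split $-\tfrac12\Delta v_{\mathrm p}$ as $\partial_s e_1$ plus $s^{\sigma-2}e_2$, which is possible because $\Delta v_{\mathrm p}$ carries factors $s^{\sigma-1}$ and $s^{2\sigma-2}$. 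Duhamel in vector form then gives the integral equation displayed in the introduction. The key linear input is the estimate $Q[\mathcal U(s,r)\vv{\psi_0}](s)\lesssim Q[\vv{\psi_0}](r)$. I prove it by differentiating $Q^2$ along the linearized flow. The potential term $\sigma\lambda s^{\sigma-2}\||\varphi|^{\sigma-1}\Re[\overline{v_{\mathrm p}}f]\|^2$ in $Q$ is designed exactly so that the non-symmetric part of the potential cancels when one computes $\frac{d}{ds}\|\nabla f\|^2$. The remaining terms come with $\frac{d}{ds}s^{\sigma-2}<0$ and $\frac{d}{ds}s^{-\sigma+\delta}<0$ and have a good sign. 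The leftover commutator terms involving $\nabla v_{\mathrm p}\sim s^{\sigma-1}\nabla\varphi$ are absorbed using $s^{-\sigma+\delta}\|f\|^2$.

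I then need nonlinear and error estimates: bounds on $Q[e_1](s)$, $Q[ie_2](r)$ and $Q[iG](r)$ by powers $r^{\beta-1+\text{gain}}$. These use $\varphi\in H^\nu$ with fractional Leibniz and chain rules, and Gagliardo--Nirenberg together with the bound $\|w\|_{L^\infty}\lesssim\|w\|^{1/2}\|\nabla w\|^{1/2}$. The critical point is that derivatives falling on the phase $e^{-i\lambda|\varphi|^{2\sigma}s^{\sigma-1}/(\sigma-1)}$ produce growth $s^{\sigma-1}$. In $G$ this growth must be paired with $\Re[\overline{v_{\mathrm p}}w]$: the quadratic part of $G$ is, up to the phase, $|\varphi|^{2\sigma-2}$ times $\Re[\overline{v_{\mathrm p}}w]\,w$, and the last component of $Q$ controls precisely $s^{(\sigma-2)/2}\||\varphi|^{\sigma-1}\Re[\overline{v_{\mathrm p}}w]\|$. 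This cancellation is where I expect the real difficulty. It is also where the exponents $\mu_1,\mu_2,\mu_3$ arise, and where the conditions $\mu>0$, $\beta>0$ defining $\nu_*$ and $2/\sqrt7$ enter.

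Combining these estimates yields
$$Q[w_\ep](s)\lesssim s^{\beta-\delta}+\int_\ep^s r^{\sigma-2}\big(r^{1-\sigma+\beta+\mu-\delta}+r^{\mu'-1}Q[w_\ep](r)^{1+\theta}\big)dr .$$
I close it by a bootstrap on the maximal interval where $Q[w_\ep]\le 2Cs^{\beta-\delta}$, taking $s_0$ small and using $\mu>0$. The constants depend only on $\|\varphi\|_{H^\nu}$, $\sigma$, $\lambda$, $\delta$, not on $\ep$.

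\textbf{H\"older bound.} For \eqref{proposition_regularized_1_2} I write $w_\ep(s)-w_\ep(s')=\int_{s'}^s\partial_r w_\ep\,dr$ and bound $\|\partial_r w_\ep\|_{H^{-1}}$ from the equation. I estimate $\|\Delta w_\ep\|_{H^{-1}}\le\|\nabla w_\ep\|\lesssim r^{\beta-\delta}$. For the nonlinear difference I use $\|\,|v_\ep|^{2\sigma}v_\ep-|v_{\mathrm p}|^{2\sigma}v_{\mathrm p}\|\lesssim\|w_\ep\|$ with $r^{\sigma-2}\|w_\ep\|\lesssim r^{\sigma-2+\beta+\sigma/2-\delta/2}$, where the $L^2$ decay comes from the middle term of $Q$. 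The term $\Delta v_{\mathrm p}$ in $H^{-1}$ is $O(r^{\sigma-1})$. Integrating, each contribution is at least as good as $|s-s'|^{3\sigma/2-1}$, since $\beta>0$ and $\int_{s'}^s r^{a-1}dr\le |s-s'|^{a}/a$ for $0<a\le1$.
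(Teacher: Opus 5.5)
Your architecture is the paper's: linearize around $v_{\mathrm p}$, prove the modified energy bound for $\mathcal U(s,r)$, bound the nonlinear and error terms, then run a continuity argument. The well-posedness, mass conservation, bootstrap and H\"older parts are fine. Your derivation of \eqref{proposition_regularized_1_2} directly from the $w_\ep$-equation, using $\|\Delta v_{\mathrm p}\|_{H^{-1}}\lesssim s^{\sigma-1}$, is slightly more direct than the paper's. One small correction there: the exponent of $\|w_\ep\|$ is $\beta+\sigma/2-3\delta/2$, so you need $\delta<2\beta/3$.

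\textbf{The gap: the error estimates that determine $\beta$.} You leave these to ``fractional Leibniz and chain rules'', and term-by-term bounds are not enough.
\begin{itemize}
\item Since $\Delta v_{\mathrm p}$ contains $-\gamma^2|\nabla\Phi|^2v_{\mathrm p}$ with $\gamma\sim s^{\sigma-1}$, such bounds give only $\|e_1\|+\|e_2\|\lesssim s\gamma^2\sim s^{2\sigma-1}$, however smooth $\varphi$ is.
\item The singular third component of $Q$ then yields at best $Q[e_1](s)+Q[ie_2](s)\lesssim s^{\sigma/2-1}s^{2\sigma-1}=s^{5\sigma/2-2}$.
\item The Duhamel term $\int_\ep^s r^{\sigma-2}Q[ie_2](r)\,dr$ then forces $\beta\le 7\sigma/2-3$ (up to $\delta$), which is $\le 0$ for $\sigma\le 6/7$.
\end{itemize}
So the whole range $2/\sqrt7<\sigma\le 6/7$ depends on cancellations you do not mention:
\begin{itemize}
\item For $e_1$: $\Re[\overline{v_{\mathrm p}}\tfrac{is}{2}\Delta v_{\mathrm p}]=-\tfrac s2\Im[\overline{v_{\mathrm p}}\Delta v_{\mathrm p}]$, and the real $\gamma^2$-term drops out (see \eqref{e_10}). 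This gives $Q[e_1]\lesssim s^{3\sigma/2-1}$.
\item For $e_2$: in the form \eqref{e_6}, the commutator $(\Phi\mathcal R-\mathcal R\Phi)v_{\mathrm p}$ is only $O(s\gamma)=O(s^\sigma)$, and $\Im[\overline{v_{\mathrm p}}\,\cdot\,]$ of its leading part is free of $\gamma$. The second term of \eqref{e_6} is a real multiple of $v_{\mathrm p}$, hence invisible to $\Im[\overline{v_{\mathrm p}}\,\cdot\,]$. Together these give $Q[ie_2]\lesssim s^{\nu(\sigma-1/2)-\sigma/2}$.
\end{itemize}
You also never say what $e_1,e_2$ are. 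In the paper $e_1=(e^{is\Delta/2}-I)v_{\mathrm p}$, which absorbs $-\frac12\Delta v_{\mathrm p}$ exactly and lets Lemma~\ref{lemma_R} trade regularity for powers of $s$.

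\textbf{The cancellation you do name is misplaced.} The quadratic part of $G$ is not, up to phase, $|\varphi|^{2\sigma-2}\Re[\overline{v_{\mathrm p}}w]\,w$. For $\sigma=1$, with $z=\overline{v_{\mathrm p}}w$, one has $\overline{v_{\mathrm p}}G=2|z|^2+z^2+|w|^2z$, whose real part contains $(\Im z)^2$.
\begin{itemize}
\item Only $\Im[\overline{v_{\mathrm p}}G]$ carries the factor $\Re[\overline{v_{\mathrm p}}w]$ at quadratic order. It is needed to offset the weight $s^{\sigma/2-1}$ in the third component of $Q[iG]$, not the phase growth.
\item The $s^{\sigma-1}$ growth from derivatives of the phase is removed by a gauge instead. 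One uses the equivalent energy $\widetilde Q$ built with $\nabla(e^{-i\gamma\Phi}f)$ (Lemma~\ref{lemma_nonlinear}) and the covariance $e^{-i\gamma\Phi}G(w,v_{\mathrm p})=G(e^{-i\gamma\Phi}w,\varphi)$. Then derivatives fall only on $\varphi$ and on $e^{-i\gamma\Phi}w$.
\end{itemize}

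\textbf{The linear estimate is also misdescribed.} No $\nabla v_{\mathrm p}$-terms survive. The exact identity \eqref{eq_E_ep} leaves two terms:
\begin{itemize}
\item a cross term of order $s^{-2+\delta}$, which is absorbed by the two negative terms;
\item $\sigma\lambda s^{\sigma-2}\Im\langle\nabla\psi,|\varphi|^{2\sigma-2}\Re[\overline\varphi\nabla\varphi]\psi\rangle$, which the mass term alone cannot absorb, since there is no negative $\|\nabla\psi\|^2$ term.
\end{itemize}
The second one must be split by Gagliardo--Nirenberg and Young, and the estimate closed by Gronwall with an integrable weight. This is where the condition $\sigma>4/(3+2\nu)$ enters.
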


The key point of this proposition lies in \eqref{proposition_regularized_1_1}; the (unconditional) unique existence of the solution $v_\ep$ and its mass conservation law for each $\ep$ follow from a standard argument since the nonlinearity is defocusing, $H^1$-subcritical and we are away from the singularity at $s=0$; see \cite{Ginibre_Velo_1978} and \cite[Theorem 4.11.1]{Cazenave}. Moreover,  \eqref{proposition_regularized_1_2} is easily deduced from \eqref{proposition_regularized_1_1}; see Section \ref{subsection_existence_3}. 

\subsection{Integral equation and energy estimate}
\label{subsection_energy}
The proof strategy of \eqref{proposition_regularized_1_1} builds upon a similar idea as that of the previous paper \cite{KaMi} for the cubic case $\sigma=1$. 
We start by deriving an integral equation for 
$w_{\ep}=v_\ep-v_{\mathrm p}$. Define
$
\mathcal R(s)=e^{is\Delta/2}-I.
$
Then $w_\ep$ satisfies
\begin{align}
\label{NLS_w_ep}
\left(i\partial_s+\frac12\Delta\right)\left(w_\ep-\mathcal R(s) v_{\mathrm p}\right)=\lambda  s^{\sigma-2}\left(|v_\ep|^{2\sigma}v_\ep-|\varphi|^{2\sigma}v_{\mathrm p}-\mathcal R(s)|\varphi|^{2\sigma}v_{\mathrm p}\right).
\end{align}
Indeed, it follows from \eqref{NLS_v_p} and the formula
$$
i\partial_s+\frac12\Delta=e^{is\Delta/2}i\partial_s e^{-is\Delta/2}=(I+\mathcal R(s))i\partial_s(I+\mathcal R(s))^{-1}
$$
that
\begin{align}
\nonumber
\left(i\partial_s+\frac12\Delta\right)(I+\mathcal R(s))v_{\mathrm p}
=(I+\mathcal R(s))i\partial_sv_{\mathrm p}=\lambda s^{\sigma-2}(I+\mathcal R(s))|\varphi|^{2\sigma}v_{\mathrm p}.
\end{align}
Thus \eqref{NLS_w_ep} follows. This formulation \eqref{NLS_w_ep} of the original NLS \eqref{NLS} (without the pseudo-conformal transform) was used by Hayashi--Naumkin \cite{Hayashi_Naumkin_2006} in the $d$-dimensional critical case $\sigma=1/d$. Here we briefly explain that this formulation is enough to prove the small-data modified scattering in the critical case. Since the difference of nonlinear terms satisfies
\begin{align}
\label{nonlinear}
\lambda  s^{\sigma-2}\left||v_\ep|^{2\sigma}v_\ep-|\varphi|^{2\sigma}v_{\mathrm p}\right|\lesssim \lambda  s^{\sigma-2}\left(|\varphi|^{2\sigma}|w_\ep|+|w_\ep|^{2\sigma+1}\right),
\end{align}
we have, e.g.  for the case $d=\sigma=1$ and $\alpha>1/2$, 
\begin{align*}
&\int_0^s |\lambda| r^{-1}\left\||\varphi|^{2}|w_\ep(r)|+|w_\ep(r)|^{3}\right\|_{H^\alpha}dr\\
&\lesssim s^\beta\left(|\lambda|\|\varphi\|_{H^\alpha}^2 \sup_{0<s\le 1}r^{-\beta}\|w_\ep(s)\|_{H^\alpha}+s^{2\sigma\beta}\sup_{0<s\le 1}r^{-(2\sigma+1)\beta}\|w_\ep(s)\|_{H^\alpha}^{2\sigma+1}\right)
\end{align*}
uniformly in $0\le \ep\le1$ and $\ep<s\le1$. 
 Moreover, the two terms related with $\mathcal R(s)$ in \eqref{NLS_w_ep} can be easily handled thanks to the decay estimate (see Lemma \ref{lemma_R}) $$\|\mathcal R(s)f\|_{\dot H^r}\lesssim s^{\alpha/2}\|f\|_{\dot H^{r+\alpha}},\quad 0\le \alpha\le2,\quad r\in \R,$$  and the fact that the divergence of the phase of $v_{\mathrm p}|_{\sigma=1}=\varphi e^{-i\lambda |\varphi|^2\log s}$ is only $|\log s|$.  Based on these observations, if $d=\sigma=1$, $1/2<\alpha_1<\alpha_2\le1$ and $|\lambda|\|\varphi\|_{H^{\alpha_1}}^2$ is sufficiently small, then one can solve 
\eqref{NLS_w_ep} in the energy space
$$
\{w_\ep\in C((0,1],H^{\alpha_2}(\R))\ |\ \sup_{0<s\le s_0}s^{-\beta}\|w_\ep(s)\|_{H^{\alpha_2}}<\infty\}
$$
for some $0<\beta<\alpha_2/2$ and sufficiently small $s_0$, even for the case $\ep=0$. This will provide a unique solution $u(t)$ to \eqref{NLS} that scatters to $u_{\mathrm p}(t)$ in $\F H^{\alpha_2}$ as $t\to \infty$. Note that this argument works for both defocusing and focusing cases. 

However, this strategy completely fails for the subcritical case $\sigma<1$ regardless of the size of $\|\varphi\|_{L^\infty}$, due to the strong singularity of $ s^{\sigma-2}|w_\ep|$ at $s=0$. To overcome this difficulty, we first extract the worst term of the RHS of \eqref{NLS_w_ep} by rewriting  the difference $|v_\ep|^{2\sigma}v_\ep-|\varphi|^{2\sigma}v_{\mathrm p}$ as
\begin{align}
\nonumber
|v_\ep|^{2\sigma}v_\ep-|\varphi|^{2\sigma}v_{\mathrm p}
\nonumber
=
(\sigma+1)|\varphi|^{2\sigma}w_\ep+\sigma|\varphi|^{2\sigma-2}v_{\mathrm p}^2\overline{w_\ep}+G(w_\ep,v_{\mathrm p}),
\end{align}
where
\begin{align}
\label{G_ep}
G(w_\ep,v_{\mathrm p})
= \int_0^1\left((\sigma+1)w_\ep(|w_{[\theta]}|^{2\sigma}-|v_{\mathrm p}|^{2\sigma})
+\sigma \overline{w_\ep} (|w_{[\theta]}|^{2\sigma-2}w_{[\theta]}^2-|v_{\mathrm p}|^{2\sigma-2}v_{\mathrm p}^2)\right)d\theta
\end{align}
and $w_{[\theta]}=v_{\mathrm p}+\theta w_\ep$. Note that $|v_{\mathrm p}|=|\varphi|$. This formula follows by applying Taylor's formula
$$
f(z_1)=f(z_0)+\int_0^1\left((z_1-z_0)f_z(z_0+\theta(z_1-z_0))+\overline{(z_1-z_0)}f_{\overline z}(z_0+\theta(z_1-z_0)\right)d\theta
$$
to $f(z)=|z|^{2\sigma}z$. For short we set
\begin{align}
e_1(s)&=\mathcal R(s) v_{\mathrm p}(s),\label{e1}\\
e_2(s)&=-\mathcal R(s)|\varphi|^{2\sigma}v_{\mathrm p}(s)+(\sigma+1)|\varphi|^{2\sigma}e_1(s)+\sigma|\varphi|^{2\sigma-2}v_{\mathrm p}(s)^2\overline{e_1(s)}\label{e2}.
\end{align}
Then \eqref{NLS_w_ep} is equivalent to
\begin{align}
\nonumber
&\left(i\partial_s +\frac 12\Delta\right)(w_{\ep}-e_1)\\
\label{NLS_v*}
&= \lambda  s^{\sigma-2}\left\{(\sigma+1) |\varphi|^{2\sigma}(w_{\ep}-e_1)+\sigma|\varphi|^{2\sigma-2}v_{\mathrm p}^2\overline{(w_{\ep}-e_1)}+G (w_{\ep},v_{\mathrm p})+e_2\right\},
\end{align}
where the first two terms of the RHS are most singular at $s=0$ as we will construct the solution $w_\ep$ satisfying $\|w_\ep\|\lesssim s^\beta$ as $s\to 0$ with some $\beta>0$ and the new nonlinear term satisfies $G(w_\ep,v_{\mathrm p})=O(|w_\ep|^2)$ at least. Then, the idea of our previous study \cite{KaMi} is that, instead of dealing with these first two terms as a nonlinear term, we incorporate them into the linear part as a linear potential term as follows. 
Define the time-dependent Hamiltonian $$
\mathcal H(s)=\mathcal H_0+\lambda  s^{\sigma-2}\mathcal  V(s),
$$ where
\begin{align}
\label{H}
\mathcal H_0=-\frac12\begin{pmatrix}\Delta&0\\0&-\Delta\end{pmatrix},\quad \mathcal  V(s,x)=\begin{pmatrix}(\sigma+1) |\varphi|^{2\sigma}&\sigma |\varphi|^{2\sigma-2}v_{\mathrm p}(s)^2\\-\sigma |\varphi|^{2\sigma-2}\overline{v_{\mathrm p}(s)}^2&-(\sigma+1) |\varphi|^{2\sigma}\end{pmatrix}. 
\end{align}
We set $\vv{w_{\ep}}=(w_{\ep},\overline{w_{\ep}})^{\mathrm T}$, $\vv{e_j}=(e_j,\overline{e_j})^{\mathrm T}$ and $\vv G (w_{\ep},v_{\mathrm p})=(G (w_{\ep},v_{\mathrm p}),\overline{G (w_{\ep},v_{\mathrm p})})^{\mathrm T}$, where $\bvec a^{\mathrm T}$ is the transposed vector of $\bvec a$. 
Then the system of \eqref{NLS_v*} and its complex conjugate is written as
\begin{align}
\left(\partial_s +i\mathcal H(s)\right)\left(\vv{w_{\ep}}-\vv{e_1}\right)=-\lambda  s^{\sigma-2}\left(\vv{(i G)} (w_{\ep},v_{\mathrm p})+\vv{(ie_2)}\right). 
\end{align}
Since $w_\ep(\ep)=v_{\ep}(\ep)-v_{\mathrm p}(\ep)=0$ we arrive at the following Duhamel formula:
\begin{align}
\label{IE_1}
\vv{w_{\ep}}(s)-\vv{e_1}(s)=\mathcal U(s,\ep)\vv{e_1}(\ep)-\lambda \int_\ep^s  r^{\sigma-2}\mathcal  U(s,r)\left(\vv {(iG)} (w_{\ep},v_{\mathrm p})+\vv{(i e_2)}\right)(r)dr
\end{align}
for $0<\ep\le s\le1$, where $\mathcal  U(s,r)$ is the linear propagator generated by $\mathcal H(s)$, namely, the solution to the linearized system 
\begin{equation}
\label{linearized_equation}
\left\{\begin{aligned}
&\left(i\partial_s -\mathcal H(s)\right)\bvec \Psi(s,x)=0,\quad 0< s\le1,\ x\in \R,\\
&\bvec\Psi(r,x)=\bvec \Psi_0(x),\quad x\in \R,
\end{aligned}
\right.
\end{equation}
is given by 
$
\bvec \Psi(s,x)=\mathcal  U(s,r)\bvec \Psi_0(x)
$. 
Basic properties for $\mathcal  U(s,r)$ are summarized as follows. Let $\mathbb B(X)$ denote the space of bounded operators on $X$. 
\begin{lemma}
\label{lemma_propagator}
Suppose $\sigma>0$ and $\varphi\in H^1(\R)$. Then there exists a unique propagator 
$$\mathcal  U(s,r):(0,1]\ni s,r\mapsto \mathbb B(H^1(\R)^2)$$ generated by $\mathcal H(s)$ satisfying the following properties:
\begin{itemize}
\item[(1)] $\mathcal  U(s,r)=\mathcal  U (s,s')\mathcal  U (s',s)$ and $\mathcal  U (s,s)=I$ for all $0< r,s,s' \le1$. 
\item[(2)] $(0,1]^2\ni (s,r)\mapsto \mathcal  U(s,r)$ is strongly continuous on $H^1(\R)^2$. 
\item[(3)] For all $\bvec \Psi_0\in  H^1(\R)^2$ and $s,r\in (0,1]$, $\bvec\Psi(s,x)=\mathcal  U(s,r)\bvec\Psi_0(x)$ solves \eqref{linearized_equation} in $H^{-1}(\R)^2$. 
\item[(4)] If $\bvec \Psi_0=\vv{\psi_0}$ with $\psi_0\in H^1(\R)$, then $\mathcal  U(s,r)\vv{\psi_0}=\vv \psi(s)$, where $\psi\in C((0,1],H^1(\R))$ satisfies 
\begin{equation}
\label{psi}
\left\{\begin{aligned}
&i\partial_s\psi +\frac12\Delta \psi=\lambda  s^{\sigma-2}\left(\sigma |\varphi|^{2\sigma}\psi +\sigma |\varphi|^{2\sigma-2}v_{\mathrm p}^2\overline{\psi}\right),\quad 0< s\le1,\\
&\psi(r)=\psi_0.
\end{aligned}
\right.
\end{equation}
\end{itemize}
\end{lemma}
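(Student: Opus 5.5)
The plan is to treat $\mathcal H(s)=\mathcal H_0+\lambda s^{\sigma-2}\mathcal V(s)$ as a bounded, time-dependent perturbation of the self-adjoint generator $\mathcal H_0$. Since we only work on $(0,1]$, the singular factor $s^{\sigma-2}$ stays bounded on every compact subinterval $[a,1]$, $a>0$. We then build $\mathcal U(s,r)$ by a Duhamel/Picard iteration, as in Kato's theory for bounded perturbations. The free group $e^{-is\mathcal H_0}=\mathrm{diag}(e^{is\Delta/2},e^{-is\Delta/2})$ is unitary on $H^k(\R)^2$ for every $k$. So what we need is:

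\begin{itemize}
\item[(a)] $\mathcal V(s)\in\mathbb B(H^1(\R)^2)\cap\mathbb B(L^2(\R)^2)$ for each $s\in(0,1]$;
\item[(b)] $s\mapsto\mathcal V(s)$ is continuous (in norm, or at least strongly) on these spaces.
\end{itemize}

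For (a), in one dimension a multiplication operator by $W$ is bounded on $H^1$ as soon as $W\in L^\infty$ and $W'\in L^2$; then $\|Wf\|_{H^1}\lesssim(\|W\|_{L^\infty}+\|W'\|_{L^2})\|f\|_{H^1}$, using $H^1\subset L^\infty$. The entries are $|\varphi|^{2\sigma}$ and
$$|\varphi|^{2\sigma-2}v_{\mathrm p}(s)^2=|\varphi|^{2\sigma-2}\varphi^2\exp\Bigl(-\tfrac{2i\lambda|\varphi|^{2\sigma}s^{\sigma-1}}{\sigma-1}\Bigr),$$
together with its conjugate. Each is bounded by $\|\varphi\|_{L^\infty}^{2\sigma}$. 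Their $x$-derivatives are pointwise $\lesssim(1+s^{\sigma-1}|\varphi|^{2\sigma})|\varphi|^{2\sigma-1}|\varphi'|$, which lies in $L^2$ whenever $2\sigma-1\ge0$; this holds in the range of Theorem \ref{theorem_1}. If one insists on all $\sigma>0$, the $H^1$ statement needs an additional argument, for instance regularising $|\varphi|^{2\sigma-2}$ near the zeros of $\varphi$. This is the point I expect to need the most care. The $L^2$-boundedness, by contrast, is trivial for all $\sigma>0$.

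For (b), note that $\partial_s$ of the phase is $O(s^{\sigma-2}|\varphi|^{2\sigma})$, and $\partial_s\partial_x$ of the entries is $O(s^{\sigma-2}(1+s^{\sigma-1}))|\varphi|^{2\sigma-1}|\varphi'|$. Hence $\mathcal V\in C^1((0,1];\mathbb B(H^1(\R)^2))$.

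Given (a) and (b), we solve
$$\bvec\Psi(s)=e^{-i(s-r)\mathcal H_0}\bvec\Psi_0-i\lambda\int_r^s e^{-i(s-\tau)\mathcal H_0}\tau^{\sigma-2}\mathcal V(\tau)\bvec\Psi(\tau)\,d\tau$$
in $C([a,1];H^1(\R)^2)$ for any $a\le r$. The Volterra series converges because the kernel is bounded by $\lambda a^{\sigma-2}\sup_{[a,1]}\|\mathcal V\|_{\mathbb B(H^1)}$. Gronwall gives uniqueness and the bound $\|\bvec\Psi(s)\|_{H^1}\le e^{C_a|s-r|}\|\bvec\Psi_0\|_{H^1}$. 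Since $a>0$ is arbitrary, this defines $\mathcal U(s,r)$ on all of $(0,1]^2$.

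Properties (1)--(3) then follow in the standard way:
\begin{itemize}
\item[(1)] The cocycle identity follows from uniqueness of the Duhamel solution.
\item[(2)] Strong continuity in $(s,r)$ follows from strong continuity of $e^{-is\mathcal H_0}$, continuity of $\mathcal V$, and the locally uniform bounds.
\item[(3)] Differentiating the integral equation in $H^{-1}$ shows that $\bvec\Psi$ solves \eqref{linearized_equation} in $H^{-1}(\R)^2$, because $\mathcal H_0:H^1\to H^{-1}$ is bounded.
\end{itemize}

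For (4) we use the conjugation symmetry of the system. Let $\mathcal C(a,b)=(\overline b,\overline a)$. From the form \eqref{H}, the second row of $(i\partial_s-\mathcal H(s))$ is exactly the complex conjugate of the first row. Hence $\mathcal C$ commutes with the solution map: if $\bvec\Psi$ solves \eqref{linearized_equation}, so does $\mathcal C\bvec\Psi$. Take $\bvec\Psi_0=\vv{\psi_0}$; then $\mathcal C\bvec\Psi_0=\bvec\Psi_0$, and uniqueness gives $\mathcal U(s,r)\vv{\psi_0}=\mathcal C\,\mathcal U(s,r)\vv{\psi_0}$. This means the solution has the form $\vv\psi$. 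Reading off its first component shows that $\psi\in C((0,1],H^1)$ solves the scalar equation given by the first row of the system, which is \eqref{psi}. Conversely, a solution of \eqref{psi} gives a solution of the system, so the two descriptions agree.
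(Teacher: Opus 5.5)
Your proposal is correct and follows the paper's proof. The paper likewise bounds $\|\mathcal V(s)\bvec{\Psi}_0\|_{H^1}\lesssim (1+s^{\sigma-1})\|\bvec{\Psi}_0\|_{H^1}$ via $\|\mathcal V(s)\|_{L^\infty}$ and $\|\nabla\mathcal V(s)\|$, solves the Duhamel equation \eqref{lemma_propagator_proof_3} by successive approximation using the unitarity of $e^{-is\mathcal H_0}$ on $H^1(\R)^2$, and obtains (1)--(4) from uniqueness; your conjugation-symmetry argument makes this explicit for (4).

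Your caveat about $\sigma<1/2$ is legitimate: the paper's bound on $\|\nabla \mathcal V(s)\|$ tacitly needs $2\sigma\ge 1$ too, which is harmless since only $\sigma>2/\sqrt7$ is used later. Also, the first row of the system gives the coefficient $(\sigma+1)|\varphi|^{2\sigma}\psi$, consistent with \eqref{proposition_energy_proof_2}, so the factor $\sigma$ in \eqref{psi} is a typo and your phrase ``which is \eqref{psi}'' should be read with that correction.
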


\begin{proof}
The proof is essentially the same as that of \cite[Lemma 2.2]{KaMi}, so we give its brief outline only. 
Observe that $v_{\mathrm p}= \varphi e^{-\frac{i\lambda}{\sigma-1}|\varphi|^{2\sigma} s^{\sigma-1}}$ satisfies
\begin{align*}
|\varphi|=|\varphi|,\quad |\nabla v_{\mathrm p}|\lesssim (1+ s^{\sigma-1}|\varphi|^{2\sigma})|\nabla\varphi|,
\end{align*}
which implies
\begin{align}
\nonumber
\|\mathcal  V(s)\bvec \Psi_0\|_{H^1}
&\lesssim \|\mathcal  V(s)\|_{L^\infty}\|\bvec \Psi_0\|_{H^1}+\|\nabla \mathcal  V(s)\|\|\bvec \Psi_0\|_{L^\infty}\\
&\lesssim (1+ s^{\sigma-1})\|\bvec \Psi_0\|_{H^1}. 
\label{lemma_propagator_proof_2}
\end{align}
With this bound and the unitarity of $e^{-it\mathcal H_0}$ on $H^1(\R)^2$ at hand, one can construct a unique solution $\mathcal  U(s,r)\bvec \Psi_0$ to \eqref{linearized_equation} by solving the Duhamel integral equation
\begin{align}
\label{lemma_propagator_proof_3}
\mathcal  U(s,r)\bvec \Psi_0=e^{-i(s-r)\mathcal H_0}\bvec \Psi_0-i\lambda \int_r^s \tau^{\sigma-2}e^{-i(s-\tau)\mathcal H_0}\mathcal  V(\tau,x)\mathcal  U(\tau,r)\bvec \Psi_0d\tau
\end{align}
in $C((0,1],H^1(\R)^2)$ via the standard successive approximation. The items (1)--(4) then follow from this Duhamel formula and the uniqueness of the solution to \eqref{linearized_equation}. 
\end{proof}

It follows from \eqref{lemma_propagator_proof_2}, \eqref{lemma_propagator_proof_3} and Gronwall's lemma that there exists $C>0$ such that
$$
\|\mathcal  U(s,r)\bvec \Psi_0\|_{H^1(\R)^2}\le C e^{C\{ r^{\sigma-1}- s^{\sigma-1}\}}\|\bvec \Psi_0\|_{{H^1(\R)^2}}
$$
for all $0<r\le s\le1$. This estimate is too rough to obtain uniform estimates for $\vv w_\ep$. Instead, we  prove the following modified energy estimate. In what follows, we denote 
$
Q[\vv f]=Q[f].
$

\begin{proposition}
\label{proposition_energy}
Let $\lambda>0$, $1\le \nu\le 3/2$, $4/(3+2\nu)<\sigma\le1$, $\varphi\in H^1(\R)$ and $\nabla\varphi\in L^{\frac{2}{3-2\nu}}(\R)$. Suppose $\delta>0$ is so small that $(3+2\nu)\sigma-4-(2\nu-1)\delta>0$.  Then, there exists $s_0>0$ such that
\begin{align}
\label{proposition_energy_estimate_1}
Q\big[\mathcal  U(s,r)\vv{\psi_0}\big](s)\lesssim Q[\psi_0](r),\quad \vv{\psi_0}=(\psi_0,\overline{\psi_0})^{\mathrm T}
\end{align}
uniformly in  $\psi_0\in H^1(\R)$ and $0<r\le s\le s_0$. In particular, for sufficiently small $\delta>0$, \eqref{proposition_energy_estimate_1} holds for $4/5<\sigma \le1$ if $\varphi\in H^1(\R)$ and for $2/3<\sigma\le4/5$ if in addition $\nabla \varphi \in L^\infty(\R)$. 
\end{proposition}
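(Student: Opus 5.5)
The plan is a direct weighted energy computation for the scalar equation \eqref{psi}, followed by Gronwall's lemma with an integrable coefficient. By Lemma \ref{lemma_propagator} (4) it suffices to consider $\psi(s)$ solving \eqref{psi} with $\psi(r)=\psi_0$, and to show
$$\frac{d}{ds}Q[\psi](s)^2\le C s^{-1+\kappa}Q[\psi](s)^2,\qquad 0<r\le s\le s_0,$$
for some $\kappa>0$. Gronwall then gives $Q[\psi](s)\le e^{C s_0^\kappa/\kappa}Q[\psi_0](r)$, which is \eqref{proposition_energy_estimate_1}. I would argue formally first, for smooth $\varphi$ and $\psi_0$, and pass to the general case by density using the strong continuity in Lemma \ref{lemma_propagator} (2).

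To organise the computation, write $R:=\Re[\overline{v_{\mathrm p}}\psi]$. The potential then splits into a real scalar part and a part that sees only $R$:
$$\sigma|\varphi|^{2\sigma}\psi+\sigma|\varphi|^{2\sigma-2}v_{\mathrm p}^2\overline\psi = a|\varphi|^{2\sigma}\psi+2\sigma|\varphi|^{2\sigma-2}v_{\mathrm p}R,$$
where $a=0$ as \eqref{psi} is written, and $a=1$ for the $(\sigma+1)$ convention of \eqref{H}. I will carry $a$ throughout. Differentiating each of the three pieces of $Q^2$ gives the following.

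\begin{enumerate}
\item \emph{Gradient piece.} In $\frac14\frac{d}{ds}\|\nabla\psi\|^2$ the kinetic part vanishes. The real scalar part contributes only $\lambda s^{\sigma-2}\Im\langle(\nabla|\varphi|^{2\sigma})\psi,\nabla\psi\rangle$. The dangerous contribution is $-\sigma\lambda s^{\sigma-2}\Im\langle|\varphi|^{2\sigma-2}v_{\mathrm p}\nabla R,\nabla\psi\rangle$, together with lower-order terms carrying $\nabla(|\varphi|^{2\sigma-2}v_{\mathrm p})$.
\item \emph{Third piece.} Differentiating the weight gives $(\sigma-2)\sigma\lambda s^{\sigma-3}\||\varphi|^{\sigma-1}R\|^2\le0$, which is a good sign. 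Next, $\partial_sR=\Re[\overline{\partial_sv_{\mathrm p}}\psi+\overline{v_{\mathrm p}}\partial_s\psi]$. By \eqref{NLS_v_p}, $\Re[\overline{\partial_sv_{\mathrm p}}\psi]=-\lambda s^{\sigma-2}|\varphi|^{2\sigma}\Im[\overline{v_{\mathrm p}}\psi]$. This is exactly cancelled by the scalar part of $\overline{v_{\mathrm p}}\partial_s\psi$ when $a=1$; when $a=0$ it is what is left. The $R$-part contributes $\Re[-2i\sigma\lambda s^{\sigma-2}|\varphi|^{2\sigma}R]=0$. The kinetic part is $-\tfrac12\Im[\overline{v_{\mathrm p}}\Delta\psi]$. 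After integrating by parts against $2\sigma\lambda s^{\sigma-2}|\varphi|^{2\sigma-2}R$, it produces $+\sigma\lambda s^{\sigma-2}\Im\langle|\varphi|^{2\sigma-2}v_{\mathrm p}\nabla R,\nabla\psi\rangle$, which cancels the dangerous term from the gradient piece. It also produces remainders in which the derivative falls on $|\varphi|^{2\sigma-2}\overline{v_{\mathrm p}}$.
\item \emph{Mass piece.} Differentiating $s^{-\sigma+\delta}$ gives the negative term $-(\sigma-\delta)s^{-\sigma-1+\delta}\|\psi\|^2$. The second contribution is $s^{-\sigma+\delta}\frac{d}{ds}\|\psi\|^2=4\sigma\lambda s^{\delta-2}\Im\langle|\varphi|^{2\sigma-2}v_{\mathrm p}R,\psi\rangle$; the scalar part contributes nothing, by mass conservation. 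It is bounded by $C s^{\delta-2}\||\varphi|^{\sigma-1}R\|\|\psi\|$. Since the geometric mean of the two available negative weights is $s^{-2+\delta/2}\gg s^{-2+\delta}$, Cauchy--Schwarz absorbs it into the two negative terms for $s\le s_0$ small.
\end{enumerate}
This is the reason the weight $s^{-\sigma+\delta}$ and the coefficient $\sigma\lambda>0$ appear in \eqref{Q}.

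What remains are error terms, and I expect them to be the main obstacle. They involve $\nabla|\varphi|^{2\sigma}$, $\nabla(|\varphi|^{2\sigma-2})$, which is singular at zeros of $\varphi$, and $\nabla v_{\mathrm p}$, which grows like $s^{\sigma-1}|\varphi|^{2\sigma}|\nabla\varphi|$.
\begin{itemize}
\item I would always pair the singular factor $|\varphi|^{2\sigma-2}$ with one factor $|\varphi|^{\sigma-1}R$, which is controlled by $s^{1-\sigma/2}Q$. The leftover power $|\varphi|^{\sigma-1}|\nabla\varphi|$ is then integrable, because $\sigma>1/2$.
\item I would place $\nabla\varphi\in L^{2/(3-2\nu)}$ by Hölder, and bound $\psi$ in $L^p$ with $p$ finite or infinite via Gagliardo--Nirenberg, $\|\psi\|_{L^\infty}\lesssim\|\psi\|^{1/2}\|\nabla\psi\|^{1/2}$. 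Here $\|\psi\|\le s^{(\sigma-\delta)/2}Q$ and $\|\nabla\psi\|\le 2Q$.
\end{itemize}
Collecting the powers of $s$, the worst term, with the factor $s^{\sigma-2}\cdot s^{\sigma-1}$ from $\nabla v_{\mathrm p}$, should be bounded by $s^{((3+2\nu)\sigma-4-(2\nu-1)\delta)/2-1}Q^2$. So $\kappa>0$ is precisely the hypothesis $(3+2\nu)\sigma-4-(2\nu-1)\delta>0$. The endpoint cases $\nu=1$ ($\varphi\in H^1$, giving $\sigma>4/5$) and $\nu=3/2$ ($\nabla\varphi\in L^\infty$, giving $\sigma>2/3$) yield the last sentence of the proposition. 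Checking this exponent bookkeeping term by term is the step I would expect to require the most care.
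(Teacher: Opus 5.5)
\textbf{There is a genuine gap: the error terms you plan to estimate cannot be closed, and you miss that they cancel exactly.}

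Your skeleton is the paper's, and those parts are correct:
\begin{itemize}
\item differentiate $Q[\psi]^2$;
\item keep the two negative terms $-(\sigma-\delta)s^{-\sigma-1+\delta}\|\psi\|^2$ and $-(2-\sigma)\sigma\lambda s^{\sigma-3}\||\varphi|^{\sigma-1}R\|^2$;
\item absorb the mass cross term $4\sigma\lambda s^{\delta-2}\Im\langle|\varphi|^{2\sigma-2}v_{\mathrm p}R,\psi\rangle$ by Cauchy--Schwarz;
\item close with Gronwall.
\end{itemize}

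You must commit to $a=1$. The first coefficient $\sigma$ in \eqref{psi} should be $\sigma+1$, as in \eqref{H} and \eqref{proposition_energy_proof_2}. With $a=0$ there is a leftover $-2\sigma\lambda^2 s^{2\sigma-4}\langle|\varphi|^{4\sigma-2}R,\Im[\overline{v_{\mathrm p}}\psi]\rangle$. Its size $s^{2\sigma-4}$ is far larger than the geometric mean $s^{-2+\delta/2}$ of the good weights, so it cannot be absorbed.

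The real gap is what you do after the principal cancellation. You cancel only $\Im\langle|\varphi|^{2\sigma-2}v_{\mathrm p}\nabla R,\nabla\psi\rangle$ and then plan to estimate the remainders carrying $\nabla(|\varphi|^{2\sigma-2}v_{\mathrm p})$ and $\nabla(|\varphi|^{2\sigma-2}\overline{v_{\mathrm p}})$. Do not expand the product. The $R$-part of the gradient piece is $\sigma\lambda s^{\sigma-2}\Im\langle\nabla(|\varphi|^{2\sigma-2}v_{\mathrm p}R),\nabla\psi\rangle$. The kinetic part of the third piece is
$-\sigma\lambda s^{\sigma-2}\int|\varphi|^{2\sigma-2}R\,\Im[\overline{v_{\mathrm p}}\Delta\psi]\,dx=-\sigma\lambda s^{\sigma-2}\Im\langle\nabla(|\varphi|^{2\sigma-2}v_{\mathrm p}R),\nabla\psi\rangle$.
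These cancel identically, remainders included. The only non-signed term left besides the mass cross term is $\frac{\lambda}{2}s^{\sigma-2}\Im\langle(\nabla|\varphi|^{2\sigma})\psi,\nabla\psi\rangle$. This is exactly the last term of \eqref{eq_E_ep}. The paper reaches it by pairing the equation with $\partial_s\psi$, giving \eqref{eq_E_ep_1}, and then subtracting \eqref{eq_E_ep_1_2}.

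This matters because the remainders you keep do not close under the hypothesis.
\begin{itemize}
\item \emph{Phase-derivative terms.} Those from the phase derivative of $v_{\mathrm p}$ carry the prefactor $s^{\sigma-2}\gamma\sim s^{2\sigma-3}$. Using H\"older with $\nabla\varphi\in L^{\frac{2}{3-2\nu}}$, $|R|\le|\varphi||\psi|$ and Gagliardo--Nirenberg gives only $s^{2\sigma-3+(2\nu-1)(\sigma-\delta)/4}Q^2$. This is integrable only if $(7+2\nu)\sigma>8$, e.g.\ $\sigma>8/9$ for $\nu=1$. Using the $L^2$ control of $|\varphi|^{\sigma-1}R$ instead gives roughly $\sigma>6/7$ at $\nu=1$, still not $4/5$.
\item \emph{Your proposed pairing.} Pairing $|\varphi|^{2\sigma-2}$ with $|\varphi|^{\sigma-1}R$ for the $\nabla(|\varphi|^{2\sigma-2})$ terms leaves the weight $|\varphi|^{\sigma-1}|\nabla\varphi|$. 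That weight would have to be in $L^\infty$, and it is unbounded near zeros of $\varphi$.
\end{itemize}

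So the exponent you assert for the ``worst term with $s^{\sigma-2}\cdot s^{\sigma-1}$'' is not justified. The threshold actually comes from the $\gamma$-free $\nabla|\varphi|^{2\sigma}$ term:
$s^{\sigma-2}\|\varphi\|_{L^\infty}^{2\sigma-1}\|\nabla\varphi\|_{L^{\frac{2}{3-2\nu}}}\|\psi\|_{L^{\frac{1}{\nu-1}}}\|\nabla\psi\|\lesssim s^{\sigma-2}\|\nabla\psi\|^{\frac52-\nu}\|\psi\|^{\nu-\frac12}\lesssim s^{\frac{(3+2\nu)\sigma-4-(2\nu-1)\delta}{4}-1}Q[\psi]^2$.
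This is integrable exactly under the stated condition. The paper's Young step only improves the denominator $4$ to $5-2\nu$.

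A smaller point: Lemma \ref{lemma_propagator}(2) gives continuity in $(s,r)$ only, so it does not supply the density argument in $\varphi$ that you invoke. The paper instead justifies the identity by regularizing $\psi$ with $(I-m^{-1}\Delta)^{-1}$.
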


\begin{remark}
If $\varphi\in H^\nu(\R)$ for $\nu<3/2$ or $\varphi\in B^{3/2}_{2,1}(\R)$ for $\nu=3/2$, then $\nabla \varphi\in L^{\frac{2}{3-2\nu}}(\R)$ by the Sobolev embeddings $H^{\nu-1}(\R)\subset L^{\frac{2}{3-2\nu}}(\R)$ for $\nu<3/2$ and $B^{1/2}_{2,1}(\R)\subset L^\infty(\R)$. Here $B^{s}_{p,q}(\R)$ denotes the inhomogeneous Besov space. It is also known that $H^{\nu}(\R)\subset B^{3/2}_{2,1}(\R)$ if $\nu>3/2$. 
\end{remark}


\begin{proof}[Proof of Proposition \ref{proposition_energy}]
By Lemma \ref{lemma_propagator} (4), it suffices to prove
\begin{align}
\label{proposition_energy_proof_0}
Q[\psi(s)](s)\lesssim Q[\psi_0](r),
\end{align}
where  $\psi\in C((0,1],H^1(\R))$ is the solution to \eqref{psi}. 
The key ingredient is the following identity: 
\begin{align}
\nonumber
\frac{d}{ds}\Big( Q[\psi](s)\Big)^2&=-(\sigma-\delta)  s^{-\sigma-1+\delta}\|\psi\|^2-(2-\sigma)\sigma\lambda  s^{\sigma-3} \||\varphi|^{\sigma-1}\Re[\overline{v_{\mathrm p}}\psi]\|^2\\
\nonumber
&\quad -4\sigma\lambda  s^{-2+\delta}\<|\varphi|^{2\sigma-2}\Re[\overline{v_{\mathrm p}}\psi],\Im[\overline{v_{\mathrm p}}\psi]\>\\
\label{eq_E_ep}
&\quad-\sigma \lambda  s^{\sigma-2}\Im\<\nabla \psi,|\varphi|^{2\sigma-2}\Re[\overline{\varphi}\nabla\varphi]\psi\>.
\end{align}
To prove this identity, we observe that the first equation of \eqref{psi} is written in the form
\begin{align}
\label{proposition_energy_proof_2}
i\partial_s\psi +\frac12\Delta\psi
=\lambda  s^{\sigma-2}\left\{|\varphi|^{2\sigma}\psi+2\sigma  |\varphi|^{2\sigma-2}\Re[\overline{v_{\mathrm p}}\psi]v_{\mathrm p}\right\}.
\end{align}
Then \eqref{eq_E_ep} is obtained by the combination of the following three identities: 
\begin{align}
&\frac14\frac{d}{ds}\|\nabla \psi\|^2+\lambda  s^{\sigma-2} \frac{d}{ds}\left(\frac12\||\varphi|^{\sigma}\psi\|^2+\sigma\left\||\varphi|^{\sigma-1}\Re[\overline{v_{\mathrm p}}\psi]\right\|^2\right)\nonumber\\
&\quad =-2\sigma\lambda^2  s^{2\sigma-4} \<|\varphi|^{4\sigma-2}\Re[\overline{v_{\mathrm p}}\psi],\Im[\overline{v_{\mathrm p}}\psi]\>,\label{eq_E_ep_1}\\
&\frac{d}{ds}( s^{-\sigma+\delta}\|\psi\|^2)
=-(\sigma-\delta)  s^{-\sigma-1+\delta}\|\psi\|^2-4\sigma\lambda  s^{-2+\delta}\<|\varphi|^{2\sigma-2}\Re[\overline{v_{\mathrm p}}\psi],\Im[\overline{v_{\mathrm p}}\psi]\>\label{eq_E_ep_1_1},\\
&\frac{\lambda s^{\sigma-2}}{2}\frac{d}{ds}\||\varphi|^\sigma \psi\|^2
=\sigma \lambda s^{\sigma-2}\Im\<\nabla \psi,|\varphi|^{2\sigma-2}\Re[\overline{\varphi}\nabla\varphi]\psi\>\nonumber\\
&\quad\qquad\qquad\qquad\qquad-2\sigma\lambda^2 s^{2\sigma-4}\<|\varphi|^{4\sigma-2}\Re[\overline{v_{\mathrm p}}\psi],\Im[\overline{v_{\mathrm p}}\psi]\>\label{eq_E_ep_1_2}.
\end{align}
Indeed, \eqref{eq_E_ep} is obtained by calculating both sides of \eqref{eq_E_ep_1}+\eqref{eq_E_ep_1_1}-\eqref{eq_E_ep_1_2} as follows: 
\begin{align*}
&\text{LHS of }\eqref{eq_E_ep_1}+\eqref{eq_E_ep_1_1}-\eqref{eq_E_ep_1_2}\\
&=\frac14\frac{d}{ds}\|\nabla \psi\|^2
+\sigma\lambda  s^{\sigma-2} \frac{d}{ds}\left\||\varphi|^{\sigma-1}\Re[\overline{v_{\mathrm p}}\psi]\right\|^2
+\frac{d}{ds}( s^{-\sigma+\delta}\|\psi\|^2)\\
&=\frac{d}{ds}\Big( Q[\psi](s)\Big)^2-(\sigma-2)\sigma\lambda s^{\sigma-3}\left\||\varphi|^{\sigma-1}\Re[\overline{v_{\mathrm p}}\psi]\right\|^2
\end{align*}
and
\begin{align*}
&\text{RHS of }\eqref{eq_E_ep_1}+\eqref{eq_E_ep_1_1}-\eqref{eq_E_ep_1_2}\\
&=
-(\sigma-\delta)  s^{-\sigma-1+\delta}\|\psi\|^2-4\sigma\lambda  s^{-2+\delta}\<|\varphi|^{2\sigma-2}\Re[\overline{v_{\mathrm p}}\psi],\Im[\overline{v_{\mathrm p}}\psi]\>\\
&\quad -\sigma \lambda s^{\sigma-2}\Im\<\nabla \psi,|\varphi|^{2\sigma-2}\Re[\overline{\varphi}\nabla\varphi]\psi\>.
\end{align*}
 Identities \eqref{eq_E_ep_1_1} and \eqref{eq_E_ep_1_2} are obtained by direct calculations. Precisely, we have 
\begin{align*}
\frac{d}{ds}( s^{-\sigma+\delta}\|\psi\|^2)
=-(\sigma-\delta)  s^{-\sigma-1+\delta}\|\psi\|^2
+2s^{-\sigma+\delta}\Re\<\partial_s\psi,\psi\>,
\end{align*}
where $\<\cdot,\cdot\>$ is identified with the duality coupling $\<\cdot,\cdot\>_{H^{-1},H^1}$ and \eqref{proposition_energy_proof_2} implies
\begin{align*}
\Re\<\partial_s\psi,\psi\>
&=2\sigma \lambda s^{\sigma-2}\Im\<|\varphi|^{2\sigma-2}\Re[\overline{v_{\mathrm p}}\psi]v_{\mathrm p},\psi\>\\
&=-2\sigma \lambda s^{\sigma-2}\<|\varphi|^{2\sigma-2}\Re[\overline{v_{\mathrm p}}\psi],\Im[\overline{v_{\mathrm p}}\psi]\>.
\end{align*}
Similarly, 
\begin{align*}
&\frac{\lambda s^{\sigma-2}}{2}\frac{d}{ds}\||\varphi|^\sigma \psi\|^2\\
&=\frac{\lambda s^{\sigma-2}}{2}\left(-\Im\<\Delta \psi,|\varphi|^{2\sigma}\psi\>
+4\sigma \lambda s^{\sigma-2}\Im\<|\varphi|^{2\sigma-2}\Re[\overline{v_{\mathrm p}}\psi]v_{\mathrm p},|\varphi|^{2\sigma}\psi\>\right)\\
&=\sigma\lambda s^{\sigma-2} \Im\<\nabla \psi,|\varphi|^{2\sigma-2}\Re[\overline{\varphi}\nabla \varphi]\psi\>
-2\sigma \lambda^2 s^{2\sigma-4}\<|\varphi|^{4\sigma-2}\Re[\overline{v_{\mathrm p}}\psi],\Im[\overline{v_{\mathrm p}}\psi]\>.
\end{align*}
On the other hand, \eqref{eq_E_ep_1} follows at least formally by calculating both sides of $$\Re \<\text{LHS of }\eqref{proposition_energy_proof_2},\partial_s \psi\>=\Re \<\text{RHS of }\eqref{proposition_energy_proof_2},\partial_s \psi\>.$$
Indeed, we obtain
\begin{align*}
&\Re \<i\partial_s\psi +\frac12\Delta\psi,\partial_s \psi\>
=-\frac14\frac{d}{ds}\left\|\nabla \psi\right\|^2
\end{align*}
and, by using \eqref{NLS_v_p}, 
\begin{align*}
&\Re\< |\varphi|^{2\sigma}\psi+2\sigma |\varphi|^{2\sigma-2}\Re[\overline{v_{\mathrm p}}\psi],\partial_s\psi\>\\
&=\frac{1}{2}\frac{d}{ds}\||\varphi|^{\sigma}\psi\|^2+2\sigma \Re\< |\varphi|^{2\sigma-2}\Re[\overline{v_{\mathrm p}}\psi],\partial_s(\overline{v_{\mathrm p}}\psi)\>\\
&\quad -2\sigma\Re\< |\varphi|^{2\sigma-2}\Re[\overline{v_{\mathrm p}}\psi],i\lambda s^{\sigma-2}|\varphi|^{2\sigma}\overline{v_{\mathrm p}}\psi \>\\
&=\frac{1}{2}\frac{d}{ds}\||\varphi|^{\sigma}\psi\|^2
+\sigma \frac{d}{ds}\||\varphi|^{\sigma-1}\Re[\overline{v_{\mathrm p}}\psi]\|^2
+2\sigma\lambda s^{\sigma-2}\< |\varphi|^{4\sigma-2}\Re[\overline{v_{\mathrm p}}\psi],\Im[\overline{v_{\mathrm p}}\psi] \>. 
\end{align*}
Identity \eqref{eq_E_ep_1} thus follows. 
Note however that the term $\<i\partial_s\psi +\frac12\Delta\psi,\partial_s \psi\>$ does not make sense if $\psi\in C((0,1].H^1(\R))\cap C^1((0,1],H^{-1}(\R))$ only. To be more precise, if we set $\psi^m=(I-m^{-1}\Delta)^{-1}\psi$, then \eqref{eq_E_ep_1} is obtained by calculating both sides of
\begin{align}
\Re \<\text{LHS of }\eqref{proposition_energy_proof_2},\partial_s \psi^m\>
&=\Re \<\text{RHS of }\eqref{proposition_energy_proof_2},\partial_s \psi^m\>,\label{eq_E_ep_2}
\end{align}
and  taking $m\to \infty$. This is possible since $(I-m^{-1}\Delta)^{-1}$ commutes with $i\partial_s+\frac12\Delta$. Since the computation is rather involved, we give the technical details in Appendix \ref{appendix_A} below.

Now we prove \eqref{proposition_energy_proof_0}. Note that the first two terms of the RHS of \eqref{eq_E_ep} are negative and thus good terms. Other two terms  are not sign definite, but can be controlled by the first two terms as follows. For the third term, taking $s_0>0$ further small if necessary, we obtain
\begin{align}
\nonumber
&\left|-4\sigma\lambda  s^{-2+\delta}\<|\varphi|^{2\sigma-2}\Re[\overline{v_{\mathrm p}}\psi],\Im[\overline{v_{\mathrm p}}\psi]\>\right|\\
\nonumber
&\le 4\sigma\lambda  s^{\frac{\delta}{2}}\|\varphi\|_{L^\infty}^{\sigma}
\cdot  s^{-\frac{\sigma+1-\delta}{2}}\|\psi\|
\cdot  s^{\frac{\sigma-3}{2}}\|\varphi|^{\sigma-1}\Re[\overline{v_{\mathrm p}}\psi]\|\\
\label{proposition_energy_estimate_1_proof_4}
&\le \frac{(\sigma-\delta)  s^{-\sigma-1+\delta}\|\psi\|^2}{2}+\frac{(2-\sigma)\sigma\lambda  s^{\sigma-3} \||\varphi|^{\sigma-1}\Re[\overline{v_{\mathrm p}}\psi]\|^2}{2}
\end{align}
for all $s\in (0,s_0]$ with sufficiently small $s_0=s_0(\sigma,\lambda,\delta,\varphi)$. 
To deal with the last term of \eqref{eq_E_ep}, we use Gagliardo--Nirenberg's  inequality
\begin{align}
\|f\|_{L^{\frac{1}{\nu-1}}(\R)}\lesssim \|\nabla f\|_{L^2(\R)}^{\frac32-\nu}\|f\|_{L^2(\R)}^{\nu-\frac12},\quad 1\le \nu\le 3/2,
\label{GN}
\end{align}
and Young's inequality \begin{align}
ab\le \frac{a^p}{p}+\frac{b^q}{q}
\label{Young}
\end{align} with $p=\frac{2}{5/2-\nu}$, $q=\frac{2}{\nu-1/2}$, $a=\sqrt{m}^{-(\nu-1/2)}\|\nabla \psi\|^{5/2-\nu}$ and $b=(\sqrt{m}\|\psi\|)^{\nu-1/2}$ to obtain\begin{align*}|\Im\<\nabla \psi,|\varphi|^{2\sigma-2}\Re[\overline{\varphi}\nabla\varphi]\psi\>|&\lesssim \|\nabla \psi\|_{L^2}\|\varphi\|_{L^\infty}^{2\sigma-1}\|\nabla \varphi\|_{L^{\frac{2}{3-2\nu}}}\|\psi\|_{L^{\frac{1}{\nu-1}}}\\&\lesssim \|\nabla \psi\|^{\frac52-\nu}\|\psi\|^{\nu-\frac12}\\&\le C_0 (m^{-\frac{\nu-1/2}{5/2-\nu}}\|\nabla \psi\|^2+m\|\psi\|^2)\end{align*}with some $C_0>0$ uniformly in $m>0$. By choosing $$m=\frac{(\sigma-\delta) s^{1-2\sigma+\delta}}{2C_0 \sigma\lambda}$$so that $C_0\sigma\lambda m s^{\sigma-2}=(\sigma-\delta) s^{-\sigma-1+\delta}/2$ and $\sigma\lambda C_0 s^{\sigma-2}m^{-\frac{\nu-1/2}{5/2-\nu}}\sim  s^{\frac{(3+2\nu)\sigma-4-(2\nu-1)\delta}{5-2\nu}-1}$, we have
\begin{align}
\nonumber
&|\sigma \lambda  s^{\sigma-2}\Im\<\nabla \psi,|\varphi|^{2\sigma-2}\Re[\overline{\varphi}\nabla\varphi]\psi\>|\\
\label{proposition_energy_estimate_1_proof_5}
&\le C s^{\frac{(3+2\nu)\sigma-4-(2\nu-1)\delta}{5-2\nu}-1}\|\nabla \psi\|^2+\frac{(\sigma-\delta)  s^{-\sigma-1+\delta}\|\psi\|^2}{2}
\end{align}
with some $C>0$. It follows from \eqref{eq_E_ep}, \eqref{proposition_energy_estimate_1_proof_4} and \eqref{proposition_energy_estimate_1_proof_5} that
\begin{align*}
\frac{d}{ds}\Big(Q[\psi](s)\Big)^2
\lesssim  s^{\frac{(3+2\nu)\sigma-4-(2\nu-1)\delta}{5-2\nu}-1}\|\nabla \psi\|^2
\lesssim  s^{\frac{(3+2\nu)\sigma-4-(2\nu-1)\delta}{5-2\nu}-1}Q[\psi](s)^2.
\end{align*}
Assuming $(3+2\nu)\sigma-4-(2\nu-1)\delta>0$, we obtain \eqref{proposition_energy_proof_0} as follows: 
$$Q[\psi](s)^2\le Q[\psi_0](r)^2\exp\left(C \int_r^s \tau^{\frac{(3+2\nu)\sigma-4-(2\nu-1)\delta}{5-2\nu}-1}d\tau\right)\lesssim Q[\psi_0](r)^2$$uniformly in $0< r<s\le s_0$. 
\end{proof}


\subsection{Nonlinear and error estimates}
\label{subsection_nonlinear_estimate}
In this section we prove the modified energy estimates for the nonlinear term $G(w_{\ep},v_{\mathrm p})$ and error terms $e_1,e_2$. We start with the nonlinear term. 
\begin{proposition}
\label{proposition_nonlinear}
Under the same conditions on $\sigma,\nu$ and $\varphi$ in Proposition \ref{proposition_energy}, we have 
\begin{align}
Q[iG(w_\ep,v_{\mathrm p})](s)
\nonumber
&\lesssim s^{\frac{\sigma^2}{2}+\sigma-1-\frac{(\sigma+1)\delta}{2}}Q[w_\ep](s)^{2\sigma+1}+ s^{\frac{\sigma}{4}-\frac{\delta}{4}}Q[w_\ep](s)^2\\
&\quad+ s^{\frac{\sigma^2}{2}+\frac{(\nu-1)\sigma}{2}-\frac{(\sigma-\nu+1)\delta}{2}}Q[w_\ep](s)^{2\sigma}
\label{proposition_nonlinear_2}
\end{align}
uniformly in $0<\ep\le s\le s_0$ provided $s_0=s_0(\sigma,\nu,\varphi)$ is sufficiently small.  
\end{proposition}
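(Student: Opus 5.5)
The plan is to estimate the three pieces of $Q[iG]^2$ separately: the gradient part $\frac14\|\nabla G\|^2$, the mass part $s^{-\sigma+\delta}\|G\|^2$, and the weighted part $\sigma\lambda s^{\sigma-2}\||\varphi|^{\sigma-1}\Re[\overline{v_{\mathrm p}}\,iG]\|^2=\sigma\lambda s^{\sigma-2}\||\varphi|^{\sigma-1}\Im[\overline{v_{\mathrm p}}G]\|^2$. Throughout, $w_\ep$ is measured only through the three quantities controlled by $Q[w_\ep](s)$:
\[
\|\nabla w_\ep\|\lesssim Q,\qquad \|w_\ep\|\lesssim s^{\frac{\sigma-\delta}{2}}Q,\qquad \||\varphi|^{\sigma-1}\Re[\overline{v_{\mathrm p}}w_\ep]\|\lesssim s^{\frac{2-\sigma}{2}}Q.
\]
We also use the Gagliardo--Nirenberg bound $\|w_\ep\|_{L^\infty}\lesssim\|\nabla w_\ep\|^{1/2}\|w_\ep\|^{1/2}\lesssim s^{\frac{\sigma-\delta}{4}}Q$. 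I expect this last bound to be the source of the factor $s^{\sigma/4-\delta/4}$ in the quadratic term.

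First I record the pointwise bounds for $G=f(v_{\mathrm p}+w_\ep)-f(v_{\mathrm p})-Df(v_{\mathrm p})w_\ep$, where $f(z)=|z|^{2\sigma}z$. Since $\sigma>1/2$, the derivative $Df$ is Hölder of order $2\sigma$ with a $|z|^{2\sigma-1}$-type local Lipschitz constant. Hence
\[
|G|\lesssim \min\{|w_\ep|^{2\sigma+1},\ |\varphi|^{2\sigma-1}|w_\ep|^2+|w_\ep|^{2\sigma+1}\}.
\]
The mass part then gives $s^{-\frac{\sigma-\delta}{2}}\|G\|\lesssim s^{-\frac{\sigma-\delta}{2}}\|w_\ep\|_{L^\infty}\|w_\ep\|+\cdots$, which fits the $Q^2$ and $Q^{2\sigma+1}$ terms. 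For the weighted part I use the structural cancellation, which is the key point. Because $\Im[\overline z f(z)]=0$, and because a direct computation gives $\Im[\overline{v_{\mathrm p}}Df(v_{\mathrm p})w]=|\varphi|^{2\sigma}\Im[\overline{v_{\mathrm p}}w]$, we get
\[
\Im[\overline{v_{\mathrm p}}G]=\big(|v_{\mathrm p}+w_\ep|^{2\sigma}-|\varphi|^{2\sigma}\big)\Im[\overline{v_{\mathrm p}}w_\ep].
\]
Next, $|v_{\mathrm p}+w_\ep|^2-|\varphi|^2=2\Re[\overline{v_{\mathrm p}}w_\ep]+|w_\ep|^2$. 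Splitting into the regions $|w_\ep|\le|\varphi|/2$ and $|w_\ep|>|\varphi|/2$ gives
\[
|\varphi|^{\sigma-1}|\Im[\overline{v_{\mathrm p}}G]|\lesssim |\varphi|^{2\sigma-1}|w_\ep|\,|\varphi|^{\sigma-1}|\Re[\overline{v_{\mathrm p}}w_\ep]|+|\varphi|^{\sigma}|w_\ep|^{2\sigma+1}\cdot(\text{bounded factor}).
\]
Here the dangerous weight $s^{\frac{\sigma-2}{2}}$ is absorbed by the factor $\Re[\overline{v_{\mathrm p}}w_\ep]$, which costs exactly $s^{\frac{2-\sigma}{2}}$. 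The cruder remainder terms are interpolated with $|\varphi|^{\sigma}\le|\varphi|^{\sigma-1}\cdot|\varphi|$ and the $L^\infty$ bound on $w_\ep$. The exponents should produce the first and third terms of \eqref{proposition_nonlinear_2}.

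The gradient part is where I expect the main obstacle. Differentiating $G$ produces two kinds of terms. The first kind has $\nabla w_\ep$ multiplied by differences $Df(v_{\mathrm p}+\theta w_\ep)-Df(v_{\mathrm p})$, which are bounded by $\min\{|w_\ep|^{2\sigma},|\varphi|^{2\sigma-1}|w_\ep|\}$; these are harmless via $\|w_\ep\|_{L^\infty}$. The second kind has $\nabla v_{\mathrm p}$, which contains the polynomially growing factor $s^{\sigma-1}|\varphi|^{2\sigma}\nabla\varphi$ coming from the phase. To handle these I write $\nabla v_{\mathrm p}=\big(\frac{\nabla\varphi}{\varphi}-i\frac{\lambda}{\sigma-1}s^{\sigma-1}\nabla|\varphi|^{2\sigma}\big)v_{\mathrm p}$. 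The large phase part is then $i\times(\text{real})\times v_{\mathrm p}$, and its contribution to $\nabla G$ equals $(\text{real scalar})\cdot\big(\partial_\theta$-type derivative along $iv_{\mathrm p}\big)$ applied to the second-order Taylor remainder. Using the gauge invariance $f(e^{i\alpha}z)=e^{i\alpha}f(z)$, the derivative of $G$ in the direction $iv_{\mathrm p}$ rewrites as $iG$ minus terms involving $iw_\ep$. This cancels the worst part and leaves $s^{\sigma-1}|\varphi|^{2\sigma-1}|\nabla\varphi|$ multiplied by differences of the same form as in the weighted estimate, that is, again involving $\Re[\overline{v_{\mathrm p}}w_\ep]$ or $|w_\ep|^2$. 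Those are then bounded by Hölder, with $\nabla\varphi\in L^{\frac{2}{3-2\nu}}$ and the Gagliardo--Nirenberg inequality \eqref{GN} applied to $w_\ep$ in $L^{\frac1{\nu-1}}$. This accounts for the $\nu$-dependence $s^{\frac{(\nu-1)\sigma}{2}}$ in the third term. Collecting the exponents and taking $s_0$ small finishes the proof; most of the work is checking that the phase-growth terms cancel exactly as claimed.
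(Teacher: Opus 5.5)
Your route is valid and differs from the paper's in the gradient estimate. As written, though, your pointwise bounds and the bookkeeping there are off, and this hides where the hypothesis of Proposition \ref{proposition_energy} enters.

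\textbf{Pointwise bounds.} Your bounds written with $\min$ should be sums. For $|w_\ep|\ll|\varphi|$ one has $|G|\sim|\varphi|^{2\sigma-1}|w_\ep|^2\gg|w_\ep|^{2\sigma+1}$. Likewise $|Df(v_{\mathrm p}+\theta w_\ep)-Df(v_{\mathrm p})|\lesssim|w_\ep|^{2\sigma}+|\varphi|^{2\sigma-1}|w_\ep|$. You only ever use the sum form, so nothing breaks.

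\textbf{The phase term.} The gauge identity $\partial_vG[iv]+\partial_wG[iw]=iG$ is correct, but it does not remove $\gamma\sim s^{\sigma-1}$. The phase part of $\nabla G$ is $\gamma\nabla\Phi\,\{iG-[Df(v_{\mathrm p}+w_\ep)-Df(v_{\mathrm p})](iw_\ep)\}=O\big(s^{\sigma-1}|\nabla\Phi|(|w_\ep|^2+|w_\ep|^{2\sigma+1})\big)$. By H\"older with $\nabla\Phi\in L^{2/(3-2\nu)}$ and \eqref{GN},
$s^{\sigma-1}\||\nabla\Phi||w_\ep|^2\|\lesssim s^{\sigma-1}\|w_\ep\|_{L^\infty}\|w_\ep\|_{L^{1/(\nu-1)}}\lesssim s^{\sigma-1+\nu(\sigma-\delta)/2}Q[w_\ep]^2$.
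This is $\lesssim s^{\sigma/4-\delta/4}Q[w_\ep]^2$ exactly when $(3+2\nu)\sigma-4-(2\nu-1)\delta\ge0$. So it belongs to the second term of \eqref{proposition_nonlinear_2}, and this is precisely where the hypothesis is used; the $|w_\ep|^{2\sigma+1}$ piece fits the first term. It does not produce the third term.

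\textbf{Where the third term comes from.} It comes from the piece of $\nabla v_{\mathrm p}$ you never treat, namely $e^{i\gamma\Phi}\nabla\varphi$ times the remainder $Df(v_{\mathrm p}+w_\ep)-Df(v_{\mathrm p})-D^2f(v_{\mathrm p})w_\ep=O(|w_\ep|^{2\sigma})$. This gives
$\|\nabla\varphi\|_{L^{2/(3-2\nu)}}\|w_\ep\|_{L^\infty}^{2\sigma-1}\|w_\ep\|_{L^{1/(\nu-1)}}\lesssim s^{(\sigma+\nu-1)(\sigma-\delta)/2}Q[w_\ep]^{2\sigma}$.
Likewise, your weighted estimate yields the first and second terms, not the first and third. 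The algebraic cancellation is the easy part; the exponent count is the real content.

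\textbf{Comparison with the paper.} The paper never differentiates the phase. It measures gradients via $\widetilde Q$, i.e.\ through $\|\nabla(e^{-i\gamma\Phi}f)\|$. It uses gauge invariance in integrated form, $e^{-i\gamma\Phi}G(w_\ep,v_{\mathrm p})=G(e^{-i\gamma\Phi}w_\ep,\varphi)$, so $\nabla(e^{-i\gamma\Phi}G)$ involves only $\nabla\widetilde w_\ep$ and $\nabla\varphi$, with no power of $\gamma$. The cost of the phase is paid once, in Lemma \ref{lemma_nonlinear} ($\widetilde Q\sim Q$), by the same Gagliardo--Nirenberg/Young argument under the same hypothesis.

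Since $\nabla G=e^{i\gamma\Phi}\nabla(e^{-i\gamma\Phi}G)+i\gamma(\nabla\Phi)G$, your infinitesimal identity is the unconjugated version of this. It avoids the auxiliary energy but forces you to estimate the $\gamma$-term by hand. The paper's conjugation is reusable: it appears again in the uniqueness proof with $e^{-i\gamma\Phi}v_1$.

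For the weighted part, your exact factorization $\Im[\overline{v_{\mathrm p}}G]=(|v_{\mathrm p}+w_\ep|^{2\sigma}-|\varphi|^{2\sigma})\Im[\overline{v_{\mathrm p}}w_\ep]$ is correct. It is shorter than the paper's second-order Taylor expansion with remainder $G_{2,\theta}$. After the case split $|w_\ep|\lessgtr|\varphi|/2$ it gives exactly \eqref{proposition_nonlinear_proof_4_0}.
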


To prove this proposition, the following modification of the energy $Q$ plays an important role: 
\begin{align}
\label{widetilde_Q}
\widetilde Q[f]=\left(\frac14 \|\nabla(e^{-i\gamma \Phi}f)\|^2+s^{-\sigma+\delta} \|f\|^2+\sigma\lambda s^{\sigma-2}\||\varphi|^{\sigma-1}\Re[\overline{v_{\mathrm p}} f]\|^2\right)^{1/2}
\end{align}
where $\Phi=|\varphi|^{2\sigma}$. 
\begin{lemma}
\label{lemma_nonlinear}
Under the same conditions on $\sigma,\nu$ and $\varphi$ in Proposition \ref{proposition_energy}, we have 
$$\widetilde Q[f](s)\sim Q[f](s)$$ uniformly in $s\in (0,s_0]$. 
\end{lemma}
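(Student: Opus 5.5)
The two energies differ only in their gradient terms, so the plan is to compare $\|\nabla f\|$ and $\|\nabla(e^{-i\gamma\Phi}f)\|$ up to terms controlled by the $L^2$-part $s^{-\sigma+\delta}\|f\|^2$ of the energy. Here $\gamma=\gamma(1/s)$ is the phase factor in $v_{\mathrm p}$, so that $|\gamma|\sim s^{\sigma-1}$ as $s\to0$. Writing $g=e^{-i\gamma\Phi}f$, we have
$$
\nabla g=e^{-i\gamma\Phi}\big(\nabla f-i\gamma(\nabla\Phi) f\big),\qquad \nabla f=e^{i\gamma\Phi}\big(\nabla g+i\gamma(\nabla\Phi) f\big),
$$
with $\nabla\Phi=2\sigma|\varphi|^{2\sigma-2}\Re[\overline\varphi\nabla\varphi]$, hence $|\nabla\Phi|\lesssim|\varphi|^{2\sigma-1}|\nabla\varphi|$. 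Since $2\sigma-1>0$, this needs no differentiability of $|\varphi|^{2\sigma}$ at the zeros of $\varphi$, and $\varphi\in L^\infty$ because $\varphi\in H^1(\R)$. The third terms of $Q$ and $\widetilde Q$ coincide. Everything therefore reduces to bounding the commutator term $\|\gamma(\nabla\Phi)f\|$.

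For that term I would use exactly the H\"older/Gagliardo--Nirenberg scheme from the proof of Proposition \ref{proposition_energy}. First,
$$
\|\gamma(\nabla\Phi)f\|\lesssim s^{\sigma-1}\|\varphi\|_{L^\infty}^{2\sigma-1}\|\nabla\varphi\|_{L^{\frac2{3-2\nu}}}\|f\|_{L^{\frac1{\nu-1}}}\lesssim s^{\sigma-1}\|\nabla f\|^{\frac32-\nu}\|f\|^{\nu-\frac12},
$$
by \eqref{GN}; for $\nu=1$ this is just $L^\infty\subset H^1$. Next I insert $\|f\|\le s^{(\sigma-\delta)/2}Q[f]$ and $\|\nabla f\|\le 2Q[f]$. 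This gives
$$
\|\gamma(\nabla\Phi)f\|\lesssim s^{\kappa}Q[f],\qquad \kappa=\sigma-1+\frac{(\sigma-\delta)(2\nu-1)}{4}=\frac{(3+2\nu)\sigma-4-(2\nu-1)\delta}{4}.
$$
By the standing assumption, $\kappa>0$, which is exactly the condition appearing in Proposition \ref{proposition_energy}. Hence $\|\nabla g\|\lesssim Q[f]$, and so $\widetilde Q[f]\lesssim Q[f]$.

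The reverse inequality is where care is needed. The bound above involves $\|\nabla f\|$, which is not yet controlled by $\widetilde Q[f]$, so I would absorb it. By Young's inequality \eqref{Young} with exponents $\frac{1}{3/2-\nu}$ and $\frac1{\nu-1/2}$,
$$
s^{\sigma-1}\|\nabla f\|^{\frac32-\nu}\|f\|^{\nu-\frac12}\le\eta\|\nabla f\|+C_\eta s^{\frac{\sigma-1}{\nu-1/2}}\|f\|.
$$
Now write $s^{\frac{\sigma-1}{\nu-1/2}}\|f\|= s^{\frac{\sigma-1}{\nu-1/2}+\frac{\sigma-\delta}{2}}\cdot s^{-\frac{\sigma-\delta}2}\|f\|$. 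The exponent $\frac{\sigma-1}{\nu-1/2}+\frac{\sigma-\delta}{2}$ equals $\frac{4\kappa}{2(2\nu-1)}>0$, so this term is $\lesssim\widetilde Q[f]$ for $s\le s_0\le1$. Taking $\eta$ small, $\|\nabla f\|\le\|\nabla g\|+\eta C\|\nabla f\|+C\widetilde Q[f]$ yields $\|\nabla f\|\lesssim\widetilde Q[f]$. Hence $Q[f]\lesssim\widetilde Q[f]$, with constants depending only on $\sigma,\nu,\delta,\lambda$ and the norms of $\varphi$.

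The main obstacle is exactly this absorption step together with the exponent bookkeeping: one has to check that the polynomial growth $s^{\sigma-1}$ of the phase is beaten by the extra decay $s^{(\sigma-\delta)/2}$ of $\|f\|$ built into $Q$. The computation above shows this holds precisely under $(3+2\nu)\sigma-4-(2\nu-1)\delta>0$.
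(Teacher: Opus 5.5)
Your proof is correct and is essentially the paper's argument: both control the commutator term $\gamma f\nabla\Phi$ by H\"older and \eqref{GN}, under exactly the condition $(3+2\nu)\sigma-4-(2\nu-1)\delta>0$; the paper expands $\|\nabla(e^{-i\gamma\Phi}f)\|^2$ and uses a weighted Young inequality, whereas you use the triangle inequality and substitute the energy bounds directly. For the converse, the absorption step is not needed, which is how the paper argues: since $|g|=|f|$, the same H\"older--Gagliardo--Nirenberg bound applied to $g=e^{-i\gamma\Phi}f$ gives $\|\gamma(\nabla\Phi)f\|\lesssim s^{\kappa}\widetilde Q[f]$ immediately, with your $\kappa$.
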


\begin{proof}
We compute $\nabla\Phi =2\sigma |\varphi|^{2\sigma-2}\Re[\overline{\varphi}\nabla\varphi]$ and 
$$
\|\nabla (e^{-i\gamma \Phi }f)\|^2=\|\nabla f-i\gamma f \nabla \Phi\|^2=\|\nabla f\|^2+2\Re\<\nabla f,-i\gamma f\nabla \Phi\>+\gamma^2\|f \nabla \Phi \|^2.
$$
It follows from the same computations as in the proof of \eqref{proposition_energy_estimate_1_proof_5} that
\begin{align*}
|\<\nabla f,f \nabla \Phi \>|
\lesssim m^{-\frac{\nu-1/2}{5/2-\nu}}\|\nabla f\|^2+m\|f\|^2
\end{align*}
for all $m>0$. Choosing $m=s^{1-2\sigma+\delta_1}$ with some $\delta_1>\delta$ sufficiently close to $\delta$ so that $\gamma m\sim s^{-\sigma+\delta_1}$ and $\gamma m^{-\frac{\nu-1/2}{5/2-\nu}}\sim s^{\alpha}$ with $\alpha=\frac{(3+2\nu)\sigma-4+(2\nu-1)\delta_1}{5-2\nu}>0$ and $s_0$ is small enough, we have
\begin{align*}
2\gamma |\<\nabla f, f \nabla \Phi\>|
\le C s^{\alpha} \|\nabla f\|^2+s^{-\sigma+\delta_1}\|f\|^2
\le \frac14(\|\nabla f\|^2+s^{-\sigma+\delta}\|f\|^2)
\end{align*}
for any $s\in (0,s_0]$. Note that $s_0$ can be taken uniformly in $s,f$. Similarly, we use \eqref{GN} and \eqref{Young} with $p=\frac{2}{3-2\nu}$, $q=\frac{2}{2\nu-1}$, $a=\sqrt{m}^{-(2\nu-1)}\|\nabla f \|^{3-2\nu}$ and $b=(\sqrt{m}\|f\|)^{2\nu-1}$ to obtain
\begin{align*}
\|f\|_{L^{\frac{1}{\nu-1}}}^2
\le \|\nabla f\|^{3-2\nu}\|f\|^{2\nu-1}\lesssim m^{-\frac{2\nu-1}{3-2\nu}}\|\nabla f\|^2+m\|f\|^2.
\end{align*}
Choosing $m=s^{2-3\sigma+\delta_1}$ and $\delta_1>\delta$ sufficiently close to $\delta$ so that $\gamma^2m\sim s^{-\sigma+\delta_1}$ and $\gamma^2m^{-\frac{2\nu-1}{3-2\nu}}\sim s^\alpha$, we have
\begin{align*}
\gamma^2\|f \nabla \Phi \|^2
&\lesssim \gamma^2\|\varphi\|_{L^\infty}^{2\sigma-1}\|\nabla\varphi\|_{L^{\frac{2}{3-2\nu}}}^2\|f\|_{L^{\frac{1}{\nu-1}}}^2\\
&\le Cs^{\alpha} \|\nabla f\|^2+s^{-\sigma+\delta_1}\|f|^2
\le \frac14(\|\nabla f\|^2+s^{-\sigma+\delta}\|f\|^2).
\end{align*}
Hence, $\widetilde Q[f](s)\lesssim  Q[f](s)$. Conversely, if we set $g=e^{-i\gamma\Phi}f$, then
 $$\|\nabla f\|^2=\|\nabla (e^{i\gamma\Phi }g)\|^2=\|\nabla g\|^2+2\Re\<\nabla g,i\gamma g\nabla \Phi\>+\gamma^2\|g\nabla \Phi\|^2.$$
By the same argument with $f$ replaced by $g$ then yields $ Q[f](s)\lesssim  \widetilde Q[f](s)$ for $s\in (0,s_0]$. 
\end{proof}

\begin{proof}[Proof of Proposition \ref{proposition_nonlinear}]
Note that $Q$ can be replaced by $\widetilde Q$ by the above lemma. The following elementary inequalities are used frequently: 
\begin{align}
\label{proposition_nonlinear_proof_1}
\big||z_1|^{2\sigma-k} z_1^k-|z_2|^{2\sigma-k} z_2^k\big|
&\lesssim |z_1-z_2|^{2\sigma}+|z_2|^{2\sigma-1}|z_1-z_2|,\\
\big||z_1|^{2\sigma-k-1} z_1^k-|z_2|^{2\sigma-k-1} z_2^k\big|
\label{proposition_nonlinear_proof_2}
&\lesssim |z_1-z_2|^{2\sigma-1}
\end{align}
for $z_1,z_2\in \C$ and $k=0,1,2,...$. We also use the following estimates: 
\begin{align*}
\|w_\ep\|&
\lesssim  s^{\sigma/2-\delta/2}\widetilde Q[w_\ep](s),\\
\|\nabla (e^{-i\gamma \Phi}w_\ep)\|&\lesssim \widetilde Q[w_\ep](s),\\
\|w_\ep\|_{L^\infty}&\lesssim \|w_\ep\|^{1/2}\|\nabla (e^{-i\gamma \Phi}w_\ep)\|^{1/2}\lesssim  s^{\sigma/4-\delta/4 }\widetilde Q[w_\ep](s),\\
\||\varphi|^{\sigma-1}\Re[\overline{v_{\mathrm p}}w_\ep]\|&\lesssim  s^{1-\sigma/2}\widetilde Q[w_\ep](s).
\end{align*}
These follow from the definition of the energy norm $\widetilde Q[w]$. Let $G=G(w_\ep,v_{\mathrm p})$ for short. To prove \eqref{proposition_nonlinear_2}, we need to estimate three quantities $$ s^{-\sigma/2+\delta/2}\|G\|,\quad  s^{\sigma/2-1}\||\varphi|^{\sigma-1}\Im[\overline{v_{\mathrm p}}G]\|,\quad\|\nabla (e^{-i\gamma \Phi}G)\|.$$ 
Recall that $G$ is given by $\int_0^1G_{1,\theta}d\theta,
$
where 
$$
G_{1,\theta}= (\sigma+1)w_\ep\left(|w_{[\theta]}|^{2\sigma}-|\varphi|^{2\sigma}\right)
+\sigma \overline{w_\ep}\left(|w_{[\theta]}|^{2\sigma-2}w_{[\theta]}^2-|\varphi|^{2\sigma-2}v_{\mathrm p}^2\right)
$$
and $w_{[\theta]}=v_{\mathrm p}+\theta w_\ep$. We start with dealing with $\|G\|$. It follows from \eqref{proposition_nonlinear_proof_1} and $|v_{\mathrm p}|=|\varphi|$ that 
\begin{align*}
\left||w_{[\theta]}|^{2\sigma-k}w_{[\theta]}^k-|\varphi|^{2\sigma-k}v_{\mathrm p}^k\right|
&\lesssim |w_{[\theta]}-v_{\mathrm p}|^{2\sigma}+|\varphi|^{2\sigma-1}|w_{[\theta]}-v_{\mathrm p}|\\
&\lesssim |w_\ep|^{2\sigma}+\|\varphi\|_{L^\infty}^{2\sigma-1}|w_\ep|
\end{align*}
and hence
$
|G_{1,\theta}|\lesssim |w_\ep|^{2\sigma+1}+|w_\ep|^2
$ uniformly in $\theta\in [0,1]$. Then
\begin{align}
\nonumber
 s^{-\sigma/2+\delta/2}\|G\|
&\lesssim   s^{-\sigma/2+\delta/2}\|w_\ep\|(\|w_\ep\|_{L^\infty}^{2\sigma}+\|w_\ep\|_{L^\infty})\\
&\lesssim  s^{\sigma^2/2-\sigma\delta/2}\widetilde Q[w_\ep](s)^{2\sigma+1}+ s^{\sigma/4-\delta/4}\widetilde Q[w_\ep](s)^{2}.
\label{proposition_nonlinear_proof_3}
\end{align}

For the term $|\varphi|^{\sigma-1}\Im[\overline{v_{\mathrm p}}G]$, the same argument implies a rough bound $$|\varphi|^{\sigma-1}\Im[\overline{v_{\mathrm p}}G]\lesssim |\varphi|^{2\sigma-1}|w_\ep|^2++|\varphi||w_\ep|^{2\sigma+1}. $$
However, this is not enough to prove \eqref{proposition_nonlinear_2}. Instead, we utilize the structure of $\Im[\overline{v_{\mathrm p}}G]$ to show \begin{align}
\label{proposition_nonlinear_proof_4_0}
|\Im[\overline{v_{\mathrm p}}G_{1,\theta}]|\lesssim |\varphi|^{2\sigma-1}\left|\Re[\overline{v_{\mathrm p}}w_\ep]\right||w_\ep|+|\varphi||w_\ep|^{2\sigma+1},
\end{align}
which implies the desired estimate as 
\begin{align}
\nonumber
& s^{\sigma/2-1} \||\varphi|^{\sigma-1}\Im[\overline{v_{\mathrm p}}G]\|\\
\nonumber
&\lesssim  s^{\sigma/2-1}\|\varphi\|_{L^\infty}^{2\sigma-1}\left(\||\varphi|^{\sigma-1}\Re[\overline{v_{\mathrm p}}w_\ep]\|\|w_\ep\|_{L^\infty}
+\|w_\ep\|\|w_\ep\|_{L^\infty}^{2\sigma}\right)\\
&\lesssim   s^{\sigma/4-\delta/4}\widetilde Q[w_\ep](s)^2+ s^{\sigma^2/2+\sigma-1-(\sigma+1)\delta/2}\widetilde Q[w_\ep](s)^{2\sigma+1}.
\label{proposition_nonlinear_proof_4}
\end{align}
In order to show \eqref{proposition_nonlinear_proof_4_0}, we first calculate 
\begin{align*}
\partial_z|z|^{2\sigma}&=\sigma |z|^{2\sigma-2}\overline z,\quad  
\partial_{\overline{z}}|z|^{2\sigma}=\sigma |z|^{2\sigma-2} z,\\
\partial_z(|z|^{2\sigma-2}z^2)&=(\sigma+1) |z|^{2\sigma-2}z,\quad
\partial_{\overline z}(|z|^{2\sigma-2}z^2)=(\sigma-1) |z|^{2\sigma-4}z^3,
\end{align*}
and use the Taylor expansions of $|w_{[\theta]}|^{2\sigma}$ and $|w_{[\theta]}|^{2\sigma-2}w_{[\theta]}^2$ around $v_{\mathrm p}$ to observe
\begin{align*}
\overline{v_{\mathrm p}}G_{1,\theta}
&=(\sigma+1)  \overline{v_{\mathrm p}}w_\ep\left\{\sigma|\varphi|^{2\sigma-2}\overline{v_{\mathrm p}}\theta w_\ep
+\sigma|\varphi|^{2\sigma-2}v_{\mathrm p}\theta \overline{w_\ep}\right\}\\
&\quad +\sigma \overline{v_{\mathrm p}}\overline{w_\ep}\left\{(\sigma+1) |\varphi|^{2\sigma-2}v_{\mathrm p}\theta w_\ep+(\sigma-1)|\varphi|^{2\sigma-4}v_{\mathrm p}^3\theta \overline{w_\ep}\right\}+\overline{v_{\mathrm p}}G_{2,\theta}\\
&=\theta (\sigma+1)\sigma |\varphi|^{2\sigma-2}\overline{v_{\mathrm p}}^2w_\ep^2
+2\theta (\sigma+1)\sigma|\varphi|^{2\sigma}|w_\ep|^2\\
&\quad +\theta \sigma(\sigma-1)|\varphi|^{2\sigma-2}v_{\mathrm p}^2\overline{w_\ep}^2
+\overline{v_{\mathrm p}}G_{2,\theta}
\end{align*}
where, setting $w_{[\theta,\rho]}=v_{\mathrm p}+\rho (w_{[\theta]}-v_{\mathrm p})=v_{\mathrm p}+\rho\theta w_\ep$ for short, $G_{2,\theta}$ is given by
\begin{align*}
G_{2,\theta}
&=\theta (\sigma+1)\sigma w_\ep^2 \int_0^1 \left(|w_{[\theta,\rho]}|^{2\sigma-2}\overline{w_{[\theta,\rho]}}-|\varphi|^{2\sigma-2}\overline{v_{\mathrm p}}\right)d\rho\\
&\quad+2\theta (\sigma+1)\sigma |w_\ep|^2 \int_0^1 \left(|w_{[\theta,\rho]}|^{2\sigma-2}w_{[\theta,\rho]}-|\varphi|^{2\sigma-2}v_{\mathrm p}\right)d\rho
\\
&\quad+\theta \sigma(\sigma-1) \overline{w_\ep}^2 \int_0^1 \left(|w_{[\theta,\rho]}|^{2\sigma-4}w_{[\theta,\rho]}^3-|\varphi|^{2\sigma-4}v_{\mathrm p}^3\right)d\rho. 
\end{align*}
Since $\Im \overline z=-\Im z$ and $\Im[z^2]=2\Re z\Im z$, we have 
\begin{align*}
\Im[\overline{v_{\mathrm p}}G_{1,\theta}]
=4\theta\sigma |\varphi|^{2\sigma-2}\Re[\overline{v_{\mathrm p}}w_\ep]\Im[\overline{v_{\mathrm p}}w_\ep]+\Im[\overline{v_{\mathrm p}}G_{2,\theta}].
\end{align*}
Moreover, \eqref{proposition_nonlinear_proof_2} implies
$
|G_{2,\theta}|\lesssim |w_\ep|^{2\sigma+1}
$ 
and thus \eqref{proposition_nonlinear_proof_4_0} follows. 

It remains to deal with $\|\nabla G\|$. To this end, we first calculate, for any $h\in H^1(\R)$ 
\begin{align}
\nabla (|h|^{2\sigma-2}h^{2})
\label{proposition_nonlinear_proof_6}
&=(\sigma+1) |h|^{2\sigma-2}h\nabla h+(\sigma-1)|h|^{2\sigma-4} h^3\overline{\nabla h}.
\end{align}
For short we set $\widetilde w_\ep=e^{-i\gamma \Phi}w_\ep$. Then 
$$
e^{-i\gamma \Phi}G_{1,\theta}=(\sigma+1)\widetilde w_\ep\left(|\widetilde w_{[\theta]}|^{2\sigma}-|\varphi|^{2\sigma}\right)
+\sigma \overline{\widetilde w_\ep}\left(|\widetilde w_{[\theta]}|^{2\sigma-2}\widetilde w_{[\theta]}^2-|\varphi|^{2\sigma-2}\varphi^2\right),
$$
where 
$\nabla \{\overline {\widetilde w_\ep}(|\widetilde w_{[\theta]}|^{2\sigma-2}\widetilde w_{[\theta]}^2-|\varphi|^{2\sigma-2}\varphi^2)\}$ is given by a linear combination of seven terms
\begin{align*}
&(\overline{\nabla  \widetilde w_\ep})\left(|\widetilde w_{[\theta]}|^{2\sigma-2}w_{[\theta]}^{2}-|\varphi|^{2\sigma-2}\varphi^{2}\right),\\
&\overline{\widetilde w_\ep}\left(|\widetilde w_{[\theta]}|^{2\sigma-2}\widetilde w_{[\theta]}-|\varphi|^{2\sigma-2}\varphi\right)\nabla \varphi,\\
&\overline{\widetilde w_\ep}\left(|\widetilde w_{[\theta]}|^{2\sigma-4}\widetilde w_{[\theta]}^3-|\varphi|^{2\sigma-4}\varphi^3\right)\overline{\nabla \varphi},\\
&\overline{\widetilde w_\ep}\left(|\widetilde w_{[\theta]}|^{2\sigma-2}\widetilde w_{[\theta]}-|\varphi|^{2\sigma-2}\varphi\right)\nabla (\widetilde w_{[\theta]}-\varphi),\\
&\overline{\widetilde w_\ep}\left(|\widetilde w_{[\theta]}|^{2\sigma-4}\widetilde w_{[\theta]}^3-|\varphi|^{2\sigma-4}\varphi^3\right)\overline{\nabla (\widetilde w_{[\theta]}-\varphi)},\\
&\overline{\widetilde w_\ep}|\varphi|^{2\sigma-2}\varphi\nabla (\widetilde w_{[\theta]}-\varphi),\\
&\overline{\widetilde w_\ep}|\varphi|^{2\sigma-4}\varphi^3\overline{\nabla (\widetilde w_{[\theta]}-\varphi)}.
\end{align*}
Applying \eqref{proposition_nonlinear_proof_1}, \eqref{proposition_nonlinear_proof_2} to these terms shows
\begin{align*}
&|\nabla \{\overline {\widetilde w_\ep}(|\widetilde w_{[\theta]}|^{2\sigma-2}\widetilde w_{[\theta]}^2-|\varphi|^{2\sigma-2}\varphi^2)\}|\\
&\lesssim |\nabla \widetilde w_\ep|\left(|w_\ep|^{2\sigma}+\|\varphi\|_{L^\infty}^{2\sigma-1}|w_\ep|\right)+|w_\ep|^{2\sigma}|\nabla \varphi|
+|w_\ep|^{2\sigma}|\nabla w_\ep|+\|\varphi\|_{L^\infty}^{2\sigma-1}|w_\ep||\nabla w_\ep|\\
&\lesssim |\nabla \widetilde w_\ep|(|w_\ep|^{2\sigma}+|w_\ep|)+ |\nabla \varphi||w_\ep|^{2\sigma}.
\end{align*}
By a similar argument, we also obtain the same estimate for $|\nabla \{ {\widetilde w_\ep}(|\widetilde w_{[\theta]}|^{2\sigma}-|\varphi|^{2\sigma})\}|$. 
Therefore, 
\begin{align}
\nonumber
\|\nabla   ( e^{-i\gamma \Phi}G)\|
&\lesssim \|\nabla \widetilde w_\ep\|\left(\|w_\ep\|_{L^\infty}^{2\sigma}+\|w_\ep\|_{L^\infty}\right)
+ \left\||\nabla\varphi||w_\ep|^{2\sigma}\right\|
\end{align}
By H\"older's inequality $L^{\frac{2}{3-2\nu}}(\R)\cdot L^{\frac{1}{\nu-1}}(\R)\subset L^2(\R)$, Sobolev's embedding $H^{s_\nu}(\R)\subset L^{\frac{2\sigma}{\nu-1}}(\R)$ with $s_\nu=1/2-(\nu-1)/(2\sigma)=(\sigma-\nu+1)/(2\sigma)\in (0,1)$ and the interpolation, we obtain
\begin{align}
\left\||\nabla\varphi||w_\ep|^{2\sigma}\right\|
&\le \|\nabla \varphi\|_{L^{\frac{2}{3-2\nu}}}\|w_\ep\|_{L^{\frac{2\sigma}{\nu-1}}}^{2\sigma}\nonumber\\
&\lesssim \left(\|w_\ep\|^{1-s_\nu}\|w_\ep\|_{H^1}^{s_\nu}\right)^{2\sigma}\nonumber\\
&\lesssim s^{(\sigma+\nu-1)(\sigma-\delta)/2}\widetilde Q[w_\ep](s)^{2\sigma}.
\label{proposition_nonlinear_proof_7}
\end{align}
Thus
\begin{align*}
\|\nabla (  e^{-i\gamma \Phi}G)\|
\lesssim  s^{\sigma^2/2-\sigma\delta/2}\widetilde Q[w_\ep](s)^{2\sigma+1}+ s^{ \sigma/4-\delta/4}
\widetilde Q[w_\ep](s)^2+s^{(\sigma+\nu-1)(\sigma-\delta)/2}\widetilde Q[w_\ep](s)^{2\sigma}.
\end{align*}
Since $\sigma/2^2-\sigma\delta/2>\sigma^2/2+\sigma-1-(\sigma+1)\delta/2$ for $0<\sigma<1$ and $\delta>0$, this estimate, combined with \eqref{proposition_nonlinear_proof_3} and \eqref{proposition_nonlinear_proof_4}, imply \eqref{proposition_nonlinear_2}. 
This completes the proof. 
\end{proof}

\begin{remark}It seems to be difficult to improve the bound \eqref{proposition_nonlinear_proof_4_0}.  Indeed, at least for the critical case $\sigma=1$, $\Im[\overline{v_{\mathrm p}}G]$ can be written explicitly by$$\Im[\overline{v_{\mathrm p}}G]=2\Re[\overline{v_{\mathrm p}}w_\ep]\Im[\overline{v_{\mathrm p}}w_\ep]+|w_\ep|^2\Im[\overline{v_{\mathrm p}}w_\ep]. $$Therefore, $\Im[\overline{v_{\mathrm p}}G]$ necessarily includes the contribution that cannot be expressed in the form $\Re[\overline{v_{\mathrm p}}w_\ep]f(|w_\ep|^2)$. As a consequence, it seems to be also difficult to  improve \eqref{proposition_nonlinear_2}. 
\end{remark}


We next state the energy estimates for the error terms $e_1,e_2$ given by \eqref{e1} and \eqref{e2}. 

\begin{proposition}
\label{proposition_e}
Let $3/4<\sigma<1$, $0< s\le1$ and $\varphi\in H^{\nu}(\R)$. Then
\begin{align}
\label{proposition_e_1}
Q[e_1](s)+Q[ie_2](s)
\lesssim s^{\nu(\sigma-1/2)+\sigma/2-1}
\end{align}
for $1\le \nu<2$. Moreover, we have
\begin{align}
\label{proposition_e_2}
Q[e_1](s)&\lesssim s^{3\sigma/2-1}
\end{align}
provided $\nu=2$, and \begin{align}
\label{proposition_e_3}
Q[ie_2](s)&\lesssim s^{\nu(\sigma-1/2)-\sigma/2}
\end{align}
provided $2\le \nu\le \sigma/(\sigma-1/2)$. 
\end{proposition}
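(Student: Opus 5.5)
The plan is to estimate separately the three pieces of $Q[e_1]^2$ and $Q[ie_2]^2$: the $\dot H^1$-part, the weighted $L^2$-part $s^{-\sigma+\delta}\|\cdot\|^2$, and the "real part" term $\sigma\lambda s^{\sigma-2}\||\varphi|^{\sigma-1}\Re[\overline{v_{\mathrm p}}\,\cdot\,]\|^2$. For $ie_2$ the last piece involves $\Im[\overline{v_{\mathrm p}}e_2]$. Two inputs drive everything:
\begin{itemize}
\item[(a)] the decay estimate $\|\mathcal R(s)f\|_{\dot H^r}\lesssim s^{\alpha/2}\|f\|_{\dot H^{r+\alpha}}$ of Lemma \ref{lemma_R};
\item[(b)] growth bounds for $v_{\mathrm p}=\varphi e^{-i\gamma\Phi}$, with $\gamma\sim s^{\sigma-1}$ and $\Phi=|\varphi|^{2\sigma}$, in $H^\mu$ for $0\le\mu\le\nu$.
\end{itemize}

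\textbf{Step 1 (growth of the profile).} For integer orders, differentiating $e^{-i\gamma\Phi}$ produces factors $\gamma\nabla\Phi=2\sigma\gamma|\varphi|^{2\sigma-2}\Re[\overline\varphi\nabla\varphi]\varphi^{0}$. Expanding $\nabla^k v_{\mathrm p}$, each term carries at most $\gamma^k$ times products of derivatives of $\varphi$ weighted by powers of $|\varphi|$. The worst terms are $\gamma^k|\varphi|^{(2\sigma-1)k}|\nabla\varphi|^k$, and these are harmless since $2\sigma-1>0$. A second derivative of $\Phi$ contains $|\varphi|^{2\sigma-2}|\nabla\varphi|^2$, which is singular at zeros of $\varphi$. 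To handle it I pair it with the prefactor $\varphi$ and with $\gamma$, so that it appears as $\gamma|\varphi|^{2\sigma-1}|\nabla\varphi|^2$. For non-integer $\nu$ I would interpolate between integer orders, or use a fractional Leibniz/chain rule for Hölder functions $z\mapsto|z|^{2\sigma}$. The target bound is
\begin{align*}
\|v_{\mathrm p}(s)\|_{\dot H^\mu}\lesssim s^{-\mu(1-\sigma)}\quad\text{for}\ 1\le\mu\le\nu,
\end{align*}
possibly with a worse power coming from the singular terms. That worse power is presumably the origin of the loss $s^{\sigma/2-1/2}$ in the exponent $\nu(\sigma-1/2)+\sigma/2-1$.

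\textbf{Step 2 (the $H^1$-type parts).} I apply (a) with $r=1$ and $\alpha=\nu-1$, and with $r=0$ and $\alpha=\min\{\nu,2\}$, to $v_{\mathrm p}$ and to $\Phi v_{\mathrm p}$. This gives $\|\nabla e_1\|$ and $s^{-\sigma/2}\|e_1\|$ bounded by the claimed powers. The same works for $e_2$, since multiplication by the bounded functions $\Phi$ and $|\varphi|^{2\sigma-2}v_{\mathrm p}^2$ costs only derivatives of $\varphi$. For $\nu\ge2$ the available smoothing saturates at $\alpha=2$, which gives \eqref{proposition_e_2} with $s^{3\sigma/2-1}$.

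\textbf{Step 3 (the singular weighted term; main obstacle).} Here the factor $s^{\sigma/2-1}$ must be absorbed, and this requires cancellations rather than direct bounds. I write $\mathcal R(s)=\frac{is}{2}\Delta+\mathcal R_2(s)$, where $\mathcal R_2(s)=O(s^2\Delta^2)$, or more precisely satisfies the fractional version $s^{\alpha/2}$ with $\alpha\le\nu$.
\begin{itemize}
\item For $e_1$, the main term is $\Re[\overline{v_{\mathrm p}}\tfrac{is}{2}\Delta v_{\mathrm p}]=-\tfrac s2\nabla\cdot\Im[\overline{v_{\mathrm p}}\nabla v_{\mathrm p}]=\tfrac s2\nabla\cdot(\gamma|\varphi|^2\nabla\Phi)$. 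This contains only one power of $\gamma$ instead of two.
\item For $ie_2$, I use the identity $(\sigma+1)\Phi e_1+\sigma|\varphi|^{2\sigma-2}v_{\mathrm p}^2\overline{e_1}=\Phi e_1+2\sigma|\varphi|^{2\sigma-2}v_{\mathrm p}\Re[\overline{v_{\mathrm p}}e_1]$. Its second summand has vanishing $\Im[\overline{v_{\mathrm p}}\,\cdot\,]$. The remainder is $-\mathcal R(s)(\Phi v_{\mathrm p})+\Phi\mathcal R(s)v_{\mathrm p}=-[\mathcal R(s),\Phi]v_{\mathrm p}$, and this commutator again loses one power of $\gamma$.
\end{itemize}
The leading parts are then estimated by Hölder and Gagliardo--Nirenberg \eqref{GN}, with the weight $|\varphi|^{\sigma-1}$ compensated by the factors $|\varphi|^{2\sigma-1}$ from $\nabla\Phi$. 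The $\mathcal R_2$ remainders are estimated with (a), using the maximal $\alpha\le\nu$ allowed by the regularity of $\varphi$.

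I expect the main difficulty to be making the commutator/remainder expansions rigorous at fractional regularity $\nu\in[1,\sigma/(\sigma-1/2)]$. The zeros of $\varphi$ limit the smoothness of $|\varphi|^{2\sigma}$, so I would first try to do the expansions on the Fourier side with $\mathcal R(s)=e^{is\Delta/2}-I$ treated via $|e^{-is\xi^2/2}-1-\tfrac{-is\xi^2}{2}|\lesssim(s\xi^2)^{\alpha/2}$ for $2\le\alpha\le4$. The exponents of $s$ would then be tracked through Step 1 to reach \eqref{proposition_e_1} and \eqref{proposition_e_3}.
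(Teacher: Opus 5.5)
Your handling of $1\le\nu<2$ is essentially the paper's: Lemma \ref{lemma_R} applied to $v_{\mathrm p}$ and $\Phi v_{\mathrm p}$, with $\gamma^{\nu}$ growth from Lemma \ref{lemma_v_p}. Two corrections there. First, no cancellation is needed in this range. The exponent $\nu(\sigma-1/2)+\sigma/2-1$ is just the trivial bound $s^{\sigma/2-1}\|\varphi\|_{L^\infty}^{\sigma}\|e_1\|$ with $\|e_1\|\lesssim s^{\nu(\sigma-1/2)}$; it is not a loss from singular terms in Step 1. Second, $\nabla(|\varphi|^{2\sigma-2}v_{\mathrm p}^2)$ does cost a factor $\gamma$, which you absorb using $\|e_1\|_{L^\infty}$.

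\textbf{The genuine gap is in \eqref{proposition_e_2} and \eqref{proposition_e_3}.} Your mechanism is ``$\|v_{\mathrm p}\|_{\dot H^\mu}\lesssim\gamma^\mu$ plus Lemma \ref{lemma_R} with smoothing order capped by $\nu$''. It loses at least one factor $\gamma\sim s^{\sigma-1}$ exactly in the decisive terms:
\begin{itemize}
\item At $\nu=2$ it gives $\|\nabla e_1\|\lesssim s^{1/2}\|v_{\mathrm p}\|_{\dot H^2}\sim s^{2\sigma-3/2}$. This is larger than $s^{3\sigma/2-1}$ for $\sigma<1$.
\item In your Step 3 at $\nu=2$, $\|(\mathcal R-\frac{is}{2}\Delta)v_{\mathrm p}\|\lesssim s\|v_{\mathrm p}\|_{\dot H^2}\sim s^{2\sigma-1}$. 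So the weighted term is only $O(s^{5\sigma/2-2})$, not $O(s^{3\sigma/2-1})$.
\item For $e_2$ with $\nu>2$, the bound $s^{\nu/2}\|v_{\mathrm p}\|_{\dot H^\nu}\sim s^{\nu(\sigma-1/2)}$ for $(\mathcal R-\frac{is}{2}\Delta)v_{\mathrm p}$ leaves the weighted part of $[\Phi,\mathcal R]v_{\mathrm p}$ short of $s^{\nu(\sigma-1/2)-\sigma/2}$ by a factor $\gamma$.
\item ``The commutator loses one power of $\gamma$'' yields only $s^{3\sigma/2-1}$ for $[\Phi,\frac{is}{2}\Delta]v_{\mathrm p}$. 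You also need that the $O(s\gamma)$ part of $\overline{v_{\mathrm p}}[\Phi,\frac{is}{2}\Delta]v_{\mathrm p}$, namely $s\gamma|\varphi|^2|\nabla\Phi|^2$, is real, so its imaginary part is $O(s)$.
\item Your Step 2 bound $\|\nabla\mathcal R(\Phi v_{\mathrm p})\|\lesssim s^{\nu(\sigma-1/2)-1/2}$ is worse than the required $s^{\nu(\sigma-1/2)-\sigma/2}$. The commutator structure must therefore also be exploited inside $\nabla e_2$, as in the paper's terms $\mathrm{II}_j$ and $\mathrm{III}_j$.
\end{itemize}

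\textbf{The missing idea.} The highest powers of $\gamma$ in $v_{\mathrm p}$ and its derivatives multiply only $\varphi$ and powers of $\nabla\Phi$. These are smoother than the top derivatives of $\varphi$, so they can absorb more smoothing from $\mathcal R$ than $\varphi\in H^\nu$ alone would allow. The paper implements this by iterating $e^{i\gamma\Phi}f=\nabla^{-1}e^{i\gamma\Phi}\{i\gamma f\nabla\Phi+\nabla f\}$, giving \eqref{e_2}, \eqref{e_4}, \eqref{e_8}. It then uses $\|\mathcal R\nabla^{-j}\|_{L^2\to L^2}\lesssim s^{j/2}$ for $j\le2$ and $\|(\mathcal R-\frac{is}{2}\Delta)\nabla^{-k}\|_{L^2\to L^2}\lesssim s^{k/2}$ for $2\le k\le4$. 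For $\varphi\in H^2$ this gives $\|(\mathcal R-\frac{is}{2}\Delta)v_{\mathrm p}\|\lesssim s^2\gamma^4+s^2\gamma^3+s^{3/2}\gamma^2+s\gamma+s\lesssim s^{\sigma}$. It also gives $\|\nabla e_1\|\lesssim s\gamma^3+s\gamma^2+s^{1/2}\gamma+s^{1/2}$, which is what \eqref{proposition_e_2} needs.

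For $2<\nu\le\sigma/(\sigma-1/2)$, the paper further splits the remaining $\gamma^1$-term $e^{i\gamma\Phi}(2\nabla\varphi\nabla\Phi+\varphi\Delta\Phi)$ into two parts:
\begin{itemize}
\item The part quadratic in $\nabla\varphi$ is placed in $H^{2-2\sigma}$ via Lemma \ref{lemma_Holder}, after removing the phase through $e^{i\gamma\Phi}=1+(e^{i\gamma\Phi}-1)$. This step uses $2-2\sigma<2\sigma-1$, i.e.\ $\sigma>3/4$.
\item The part containing $\Delta\varphi$ lies in $H^{\nu-2}$, at cost $\gamma^{\nu-2}$ by Lemma \ref{lemma_v_p}.
\end{itemize}
This yields $\|(\mathcal R-\frac{is}{2}\Delta)v_{\mathrm p}\|\lesssim s^{\nu(\sigma-1/2)+1-\sigma}$, one power of $\gamma$ better than your bound. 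Your Step 1 growth estimate for $\mu>2$ would require control of high Sobolev norms of $|\varphi|^{2\sigma}$ near zeros of $\varphi$. It is never needed, and by itself it cannot produce the stated rates.
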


\begin{remark}In particular, if $\varphi\in H^{\sigma/(\sigma-1/2)}(\R)$ then $Q[ie_2](s)\lesssim s^{\sigma/2}$. It will be seen in the proof that decay rate $s^{\sigma/2}$ in this bound and $s^{3\sigma/2-1}$ in \eqref{proposition_e_2} are optimal. 
\end{remark}

For the proof of this proposition, we prepare a few preliminary lemmas. 

\begin{lemma}
\label{lemma_R}
For any $r\in \R$, $0\le \nu\le2$ and $s\ge0$, 
\begin{align*}
\|\mathcal R(s)f\|_{\dot H^r}&\lesssim s^{\nu/2} \|f\|_{\dot H^{r+\nu}},\\ 
\|(\mathcal R(s)-is\Delta/2)f\|_{\dot H^r}&\lesssim s^{1+\nu/2} \|f\|_{\dot H^{2+r+\nu}}.
\end{align*}
\end{lemma}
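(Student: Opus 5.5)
The plan is to work entirely on the Fourier side, since $\mathcal R(s)=e^{is\Delta/2}-I$ is the Fourier multiplier $e^{-is\xi^2/2}-1$ and $\dot H^r$ norms are weighted $L^2$ norms of $\widehat f$. By Plancherel,
$$
\|\mathcal R(s)f\|_{\dot H^r}=\big\||\xi|^r(e^{-is\xi^2/2}-1)\widehat f(\xi)\big\|_{L^2_\xi},
$$
so the first estimate follows from a pointwise bound on the symbol: $|e^{-is\xi^2/2}-1|\lesssim (s\xi^2)^{\nu/2}=s^{\nu/2}|\xi|^\nu$ for all $0\le\nu\le2$, $s\ge0$.

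For this symbol bound we use the two elementary inequalities $|e^{i\theta}-1|\le 2$ and $|e^{i\theta}-1|\le|\theta|$ for real $\theta$. Both are trivial; the second is the mean value theorem applied to $e^{i\theta}$. Interpolating them gives $|e^{i\theta}-1|\le \min\{2,|\theta|\}\le 2^{1-\alpha}|\theta|^{\alpha}$ for any $0\le\alpha\le1$. We apply this with $\theta=-s\xi^2/2$ and $\alpha=\nu/2$, which gives $|e^{-is\xi^2/2}-1|\le C s^{\nu/2}|\xi|^{\nu}$. Multiplying by $|\xi|^r|\widehat f|$ and taking $L^2_\xi$ norms yields the first inequality for every $r\in\R$.

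The second estimate works the same way. The symbol of $\mathcal R(s)-is\Delta/2$ is $m(\xi)=e^{-is\xi^2/2}-1+is\xi^2/2$. With $g(\theta)=e^{i\theta}-1-i\theta$, Taylor's theorem gives $|g(\theta)|\le\theta^2/2$, and the first-order bound gives $|g(\theta)|\le|e^{i\theta}-1|+|\theta|\le2|\theta|$. Interpolating yields $|g(\theta)|\lesssim|\theta|^{1+\alpha}$ for $0\le\alpha\le1$. Taking $\alpha=\nu/2$ and $\theta=-s\xi^2/2$ gives $|m(\xi)|\lesssim s^{1+\nu/2}|\xi|^{2+\nu}$. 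Plancherel then gives $\|(\mathcal R(s)-is\Delta/2)f\|_{\dot H^r}\lesssim s^{1+\nu/2}\|f\|_{\dot H^{2+r+\nu}}$.

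There is no real obstacle here. The only point needing care is the interpolation $\min\{A,B\}\le A^{1-\alpha}B^{\alpha}$, and confirming that the exponent ranges $0\le\nu\le 2$ correspond exactly to $\alpha\in[0,1]$. Implicit constants depend only on $\nu$, uniformly in $r$ and $s$.
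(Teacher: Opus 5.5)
Your proposal is correct and is the paper's argument: Plancherel reduces both bounds to the pointwise symbol estimates $|e^{-ia}-1|\lesssim a^{\nu/2}$ and $|e^{-ia}-1+ia|\lesssim a^{1+\nu/2}$ with $a=s\xi^2/2\ge0$, and you justify these by interpolating the trivial bound with the Taylor bound. Your sign in the second symbol, $e^{-is\xi^2/2}-1+is\xi^2/2$, is the right one; the paper's proof writes $e^{-ia}-1-ia$, which is a harmless typo.
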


\begin{proof}
Since $\mathcal R(s)=\F^{-1}(e^{-is|\xi|^2/2}-1)\F$, the lemma follows from the unitarity of $\F$ in $\dot H^r$ and the estimates
$|e^{-ia}-1|\lesssim a^{\nu/2}$ and $|e^{-ia}-1-ia|\lesssim a^{1+\nu/2}$
for $a\ge0$ and $0\le \nu\le2$. \end{proof}

\begin{lemma}
\label{lemma_v_p}
Let $0\le \nu\le1$, $\gamma\ge1$, $\Phi \in H^{1}(\R)$ be real-valued and $f\in H^{\nu}(\R)$. Then
\begin{align*}
\|e^{i\gamma\Phi }f\|_{H^{\nu}}\lesssim \gamma^\nu(1+\|\Phi \|_{H^{1}})^\nu\|f\|_{H^{\nu}}. 
\end{align*}
\end{lemma}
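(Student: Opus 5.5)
The plan is to prove the two endpoint cases $\nu=0$ and $\nu=1$ directly and then obtain $0<\nu<1$ by complex interpolation. Define the linear operator $T_\gamma f:=e^{i\gamma\Phi}f$. It is an isometry on $L^2(\R)$, since $|e^{i\gamma\Phi}|=1$ because $\Phi$ is real-valued. This gives the case $\nu=0$ with constant $1$.

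For $\nu=1$, compute $\nabla(e^{i\gamma\Phi}f)=e^{i\gamma\Phi}(\nabla f+i\gamma f\nabla\Phi)$. In one space dimension $H^1(\R)\subset L^\infty(\R)$, so
\[
\|f\nabla\Phi\|\le\|f\|_{L^\infty}\|\nabla\Phi\|\lesssim\|f\|_{H^1}\|\Phi\|_{H^1}.
\]
Together with $\gamma\ge1$ this yields
\[
\|e^{i\gamma\Phi}f\|_{H^1}\le\|f\|_{H^1}+\gamma\|f\nabla\Phi\|\lesssim\gamma(1+\|\Phi\|_{H^1})\|f\|_{H^1}.
\]

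Since $T_\gamma$ is a single linear operator that is bounded $L^2\to L^2$ with norm $1$ and bounded $H^1\to H^1$ with norm $\lesssim\gamma(1+\|\Phi\|_{H^1})$, complex interpolation applies with $[L^2,H^1]_\nu=H^\nu$. It gives
\[
\|T_\gamma\|_{H^\nu\to H^\nu}\le 1^{1-\nu}\bigl(C\gamma(1+\|\Phi\|_{H^1})\bigr)^\nu,
\]
which is the claim.

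There is no real obstacle. The only point needing care is that the interpolation constant is exactly the product of powers of the endpoint norms; this holds for the complex method on the Hilbert scale $H^\nu$, for instance by Stein interpolation or by viewing $H^\nu$ as the domain of $(1-\Delta)^{\nu/2}$. If one prefers to avoid abstract interpolation, there is an alternative for $0<\nu<1$. Use the Gagliardo seminorm $\iint|F(x)-F(y)|^2|x-y|^{-1-2\nu}\,dx\,dy$ and the splitting
\[
e^{i\gamma\Phi(x)}f(x)-e^{i\gamma\Phi(y)}f(y)=e^{i\gamma\Phi(x)}(f(x)-f(y))+f(y)\bigl(e^{i\gamma\Phi(x)}-e^{i\gamma\Phi(y)}\bigr).
\]
Then bound $|e^{i\gamma\Phi(x)}-e^{i\gamma\Phi(y)}|\le\min\{2,\gamma|\Phi(x)-\Phi(y)|\}\le\min\{2,\gamma\|\nabla\Phi\||x-y|^{1/2}\}$. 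Cutting the $y$-integral at $|x-y|\sim(\gamma\|\nabla\Phi\|)^{-2}$ gives the factor $(\gamma\|\nabla\Phi\|)^{2\nu}$ times $\|f\|^2$, which again yields the stated bound.
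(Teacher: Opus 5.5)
Your main argument is correct and is essentially the paper's proof: multiplication by $e^{i\gamma\Phi}$ is an $L^2$ isometry, the $H^1$ bound follows from $\nabla(e^{i\gamma\Phi}f)=e^{i\gamma\Phi}(\nabla f+i\gamma f\nabla\Phi)$ and $H^1(\R)\subset L^\infty(\R)$, and you interpolate between the two. Your optional Gagliardo-seminorm alternative should be dropped, because with only the bound $|\Phi(x)-\Phi(y)|\le\|\nabla\Phi\||x-y|^{1/2}$ it fails in two ways: the short-range piece $\int_{|h|\lesssim(\gamma\|\nabla\Phi\|)^{-2}}\gamma^2\|\nabla\Phi\|^2|h|^{-2\nu}\,dh$ diverges for $\nu\ge 1/2$, and for $\nu<1/2$ the squared seminorm picks up $(\gamma\|\nabla\Phi\|)^{4\nu}$, not $(\gamma\|\nabla\Phi\|)^{2\nu}$, giving a loss of $\gamma^{2\nu}$ instead of $\gamma^{\nu}$ (this matters in the paper, where $\gamma\sim s^{\sigma-1}$).
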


\begin{proof}
Identifying $e^{i \gamma\Phi }$ with the multiplication operator $f\mapsto e^{i \gamma\Phi }f$, the lemma follows by interpolating between the following two bounds: $\|e^{i \gamma\Phi }f\|=\|f\|$ and
\begin{align*}
\|e^{i \gamma\Phi }f\|_{H^1}
\lesssim \|f\|+ \gamma\|\nabla\Phi \|\|f\|_{L^\infty}+\|\nabla f\|
\lesssim \gamma\left(1+\|\Phi \|_{H^{1}}\right)\|f\|_{H^1}.
\end{align*}
\end{proof}

\begin{lemma}
\label{lemma_Holder}
Let $m$ be an integer, $F(z)=|z|^{\alpha-m}z^m$ and $f\in H^\beta(\R)$. Then 
$$
\|F(\varphi)f\|_{H^\beta}\lesssim \begin{cases}\|\varphi\|_{H^1}^\alpha \|f\|_{H^\beta}&\text{if}\quad 0\le \beta\le1<\alpha,\ \varphi\in H^1(\R),\\\|\varphi\|_{H^{3/2}}^\alpha \|f\|_{H^\beta}&\text{if}\quad 0\le \beta<\alpha<1,\ \varphi\in H^{3/2}(\R).\end{cases}
$$
\end{lemma}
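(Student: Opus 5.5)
We interpolate between $\beta=0$ and $\beta=1$ in the first case, and use a direct fractional Leibniz (Kato--Ponce type) estimate in the second. The tool throughout is that $F$ is homogeneous of degree $\alpha$ and hence locally Hölder/Lipschitz: $|F(z)|=|z|^\alpha$ and $|\nabla_z F(z)|\lesssim |z|^{\alpha-1}$ away from $z=0$.

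\emph{Case $0\le\beta\le1<\alpha$.} For $\beta=0$ we use $\|F(\varphi)f\|\le\|\varphi\|_{L^\infty}^\alpha\|f\|\lesssim\|\varphi\|_{H^1}^\alpha\|f\|$ by $H^1(\R)\subset L^\infty(\R)$. For $\beta=1$ we note that, since $\alpha>1$, $F\in C^1(\C)$ with $|\partial_zF|+|\partial_{\bar z}F|\lesssim|z|^{\alpha-1}$, so the chain rule gives $\nabla(F(\varphi))=\partial_zF(\varphi)\nabla\varphi+\partial_{\bar z}F(\varphi)\overline{\nabla\varphi}$, and therefore
$$
\|\nabla(F(\varphi)f)\|\lesssim \|\varphi\|_{L^\infty}^{\alpha-1}\|\nabla\varphi\|\,\|f\|_{L^\infty}+\|\varphi\|_{L^\infty}^\alpha\|\nabla f\|\lesssim\|\varphi\|_{H^1}^\alpha\|f\|_{H^1}.
$$
The multiplication operator $f\mapsto F(\varphi)f$ is thus bounded on $L^2$ and on $H^1$ with norm $\lesssim\|\varphi\|_{H^1}^\alpha$, and complex interpolation yields all $0\le\beta\le1$.

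\emph{Case $0\le\beta<\alpha<1$.} Now $F$ is only $\alpha$-Hölder, $|F(z_1)-F(z_2)|\lesssim|z_1-z_2|^\alpha$ (this is \eqref{proposition_nonlinear_proof_2} with $2\sigma-1$ replaced by $\alpha$), and the chain rule is not available. We use the Gagliardo (difference-quotient) characterization of $H^\beta$ for $0<\beta<1$,
$$
\|g\|_{\dot H^\beta}^2\sim\int_\R\int_\R\frac{|g(x+y)-g(x)|^2}{|y|^{1+2\beta}}\,dx\,dy,
$$
and write $F(\varphi)f(x+y)-F(\varphi)f(x)=F(\varphi(x+y))\{f(x+y)-f(x)\}+\{F(\varphi(x+y))-F(\varphi(x))\}f(x)$. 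The first piece is bounded by $\|\varphi\|_{L^\infty}^\alpha\|f\|_{\dot H^\beta}$. For the second, the Hölder bound together with $\varphi\in H^{3/2}\subset C^{0,\theta}$ for every $\theta<1$ (indeed $\nabla\varphi\in H^{1/2}\subset\bigcap_{p<\infty}L^p$, and one has the sharper $|\varphi(x+y)-\varphi(x)|\le|y|\,M(\nabla\varphi)(x)$) gives
$$
|F(\varphi(x+y))-F(\varphi(x))|\lesssim\min\{|y|^{\alpha}(M\nabla\varphi)(x)^\alpha,\ \|\varphi\|_{L^\infty}^\alpha\}.
$$
The $y$-integral of $\min\{|y|^{2\alpha}A^{2\alpha},B^{2\alpha}\}|y|^{-1-2\beta}$ converges because $\beta<\alpha$. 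It equals $A^{2\beta}B^{2\alpha-2\beta}$ with $A=M\nabla\varphi(x)$ and $B=\|\varphi\|_{L^\infty}$, so the second piece is bounded by
$$
\|\varphi\|_{L^\infty}^{\alpha-\beta}\,\big\|(M\nabla\varphi)^{\beta}f\big\|\le\|\varphi\|_{L^\infty}^{\alpha-\beta}\,\|M\nabla\varphi\|_{L^{p}}^{\beta}\,\|f\|_{L^q}.
$$
Here Hölder is applied with $\beta/p+1/q=1/2$, and we pick $q$ with $H^\beta\subset L^q$, e.g. $1/q=1/2-\beta+\ep$ and $p<\infty$ large. Hardy--Littlewood maximal boundedness and $\nabla\varphi\in H^{1/2}\subset L^p$ then give $\lesssim\|\varphi\|_{H^{3/2}}^\alpha\|f\|_{H^\beta}$ (the powers add up to $\alpha$). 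Adding the $L^2$ bound from the first case completes this case.

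The main obstacle is this second piece in the case $\alpha<1$: the non-smoothness of $F$ at $z=0$ forbids the chain rule, and the argument hinges on two matching requirements. The condition $\beta<\alpha$ is exactly what makes the $y$-integral converge, and the extra $1/2$ derivative on $\varphi$ is exactly what supplies $\nabla\varphi\in L^p$ for large $p$. If the maximal-function splitting proves awkward near $\beta\to\alpha$, the first fallback is to interpolate $\varphi$'s regularity via Besov spaces $B^{\alpha}_{\infty,\infty}$, using $F(\varphi)\in C^{0,\alpha}$ and the multiplier bound $C^{0,\alpha}\cdot H^\beta\subset H^\beta$ for $\beta<\alpha$. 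This route needs only $\varphi$ Lipschitz-like, and $H^{3/2}$ nearly provides that.
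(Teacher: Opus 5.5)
Your argument is correct, but it takes a genuinely different route from the paper.

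\textbf{The paper's route.} The paper handles both cases with the Kenig--Ponce--Vega fractional Leibniz rule, which reduces matters to bounding $\||\nabla|^\beta F(\varphi)\|_{L^p}$ for large $p$. It then controls this quantity by the Christ--Weinstein fractional chain rule when $\alpha>1$, and by Visan's chain rule for H\"older continuous $F$ when $\alpha<1$. The latter step needs $|\nabla|^{s}\varphi$ in a high $L^r$ space with $s$ close to $1$, and this is where the hypothesis $\varphi\in H^{3/2}$ comes from.

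\textbf{Your route.} In the first case you interpolate between the elementary $L^2$ and $H^1$ multiplier bounds. This is cleaner and avoids the paper's split into $\beta>1/2$ and $\beta\le 1/2$. In the second case you use the Gagliardo difference-quotient norm together with the pointwise bound $|\varphi(x+y)-\varphi(x)|\le |y|\,M(\nabla\varphi)(x)$, which needs no fractional calculus at all.

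\textbf{A bookkeeping slip.} Under the constraint $\beta/p+1/q=1/2$, your choice $1/q=1/2-\beta+\varepsilon$ forces $p=\beta/(\beta-\varepsilon)\approx 1$, not $p$ large. It also makes no sense for $\beta>1/2$. The right choice is $1/q=1/2-\beta/p$ with any $2\le p<\infty$, which satisfies $H^\beta\subset L^q$. In particular $p=2$, $q=2/(1-\beta)$ already gives
\[
\|F(\varphi)f\|_{\dot H^\beta}\lesssim \|\varphi\|_{L^\infty}^{\alpha}\|f\|_{\dot H^\beta}+\|\varphi\|_{L^\infty}^{\alpha-\beta}\|\nabla\varphi\|^{\beta}\|f\|_{H^\beta}\lesssim \|\varphi\|_{H^1}^\alpha\|f\|_{H^\beta}.
\]

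\textbf{Consequence.} Contrary to your closing remark, the extra half derivative is not what makes your argument work. Your route proves the second case under $\varphi\in H^1$ alone, which is stronger than the lemma as stated. The paper's approach has the advantage of quoting standard fractional-calculus results directly. Yours is self-contained and gives a sharper hypothesis.
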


\begin{proof}
We first observe $\|F(\varphi)f\|\lesssim \|\varphi\|_{H^1}^{\alpha}\|f\|$ by the embedding $H^1(\R)\subset L^\infty(\R)$. When $\alpha>1$ and $\beta>1/2$ in which case $F\in C^1(\C)$ and $H^\beta(\R)$ is algebra, we have
\begin{align*}
\|F(\varphi)f\|_{\dot H^\beta}
\lesssim \|F'(\varphi)\|_{L^\infty}\|\varphi\|_{H^\beta}\|f\|_{H^\beta}
&\lesssim \|\varphi\|_{H^\beta}^\alpha\|f\|_{H^\beta},
\end{align*}
where  $F'=(\partial_zF,\partial_{\overline z}F)$. In order to prove $F(\varphi)f\in \dot H^\beta(\R)$ for $\alpha>1$ and $\beta\le1/2$ or $0\le \beta<\alpha<1$, we first employ the fractional Leibniz rule by Kenig--Ponce--Vega  \cite{KPV} to observe
$$
\|F(\varphi)f\|_{\dot H^\beta}\lesssim \|f|\nabla|^\beta F(\varphi)\|_{L^2}+\|F(\varphi)|\nabla|^\beta f\|_{L^2}+\||\nabla|^{\beta-\beta_1}F(\varphi)\|_{L^{p_1}}\||\nabla|^{\beta_1}f\|_{L^{p_2}},
$$
where $1<p_1,p_2<\infty$, $1/2=1/p_1+1/p_2$ and $0<\beta_1<\beta$. Let $p>\max\{2,1/\beta\}$ be specified later. For the first term $\|f|\nabla|^\beta F(\varphi)\|_{L^2}$, H\"older's inequality $L^p(\R)\cdot L^{\frac{2p}{p-2}}(\R)\subset L^2(\R)$ and Sobolev's embedding $H^\beta(\R) \subset L^{\frac{2p}{p-2}}(\R)$ imply
\begin{align*}
\|f|\nabla|^\beta F(\varphi)\|_{L^2}\le \|f\|_{L^{\frac{2p}{p-2}}}\||\nabla|^\beta F(\varphi)\|_{L^p}\lesssim \|f\|_{H^\beta} \||\nabla|^\beta F(\varphi)\|_{L^p}.
\end{align*}
The second term $\|F(\varphi)|\nabla|^\beta f\|_{L^2}$ is simply dominated by
$ \|\varphi\|_{H^1}^{\alpha}\|f\|_{H^\beta}$. 
For the third term, if we take $p_1>p$ close to $p$ so that $\beta_1:=1/p-1/p_1\in (0,\beta)$, then
$$
\||\nabla|^{\beta-\beta_1}F(\varphi)\|_{L^{p_1}}\||\nabla|^{\beta_1}f\|_{L^{p_2}}\lesssim \||\nabla|^\beta F(\varphi)\|_{L^p}\||\nabla|^{\beta_1+1/p_1}f\|_{L^2}\lesssim \||\nabla|^\beta F(\varphi)\|_{L^p}\|f\|_{H^\beta}
$$
by the embeddings $\dot W^{\beta_1,p}(\R)\subset L^{p_1}(\R)$ and $\dot H^{1/p_1}(\R)\subset L^{p_2}(\R)$. 

It remains to deal with $\||\nabla|^\beta F(\varphi)\|_{L^p}$. When $\alpha>1$ and $\beta\le1/2$, we use the fractional chain rule by Christ--Weinstein \cite[Proposition 3.1]{ChWe} to obtain
$$
\||\nabla|^\beta F(\varphi)\|_{L^p}\lesssim \|F'(\varphi)\|_{L^{q_1}}\||\nabla|^\beta \varphi\|_{L^{q_2}}\lesssim \|\varphi\|_{L^{(\alpha-1)q_1}}^{\alpha-1}\||\nabla|^\beta \varphi\|_{L^{q_2}},
$$
where $1/p=1/q_1+1/q_2$ and $1<q_1,q_2<\infty$. If $(\alpha-1)q_1>2$, then
$$
\|\varphi\|_{L^{(\alpha-1)q_1}}^{\alpha-1}\||\nabla|^\beta \varphi\|_{L^{q_2}}\lesssim \|\varphi\|_{H^{1/2}}^{\alpha-1}\|\varphi\|_{H^{\beta+1/2}}\lesssim \|\varphi\|_{H^1}^\alpha.
$$
On the other hand, when $0\le \beta<\alpha<1$,  the fractional chain rule for H\"older continuous  functions by Visan \cite[Proposition A.1]{Visan} implies
$$
\||\nabla|^\beta F(\varphi)\|_{L^p}\lesssim \|\varphi\|_{L^{(\alpha-\beta/s)q_1}}^{\alpha-\beta/s}\||\nabla|^s \varphi\|_{L^{{\beta q_2}/{s}}}^{\beta/s}
$$
provided $\beta/\alpha<s<1$, $1<p<\infty$, $1/p=1/q_1+1/q_2$ and $(1-\frac{\beta}{\alpha s})q_1>1$. Fixing such an $s$, we can choose  $p>\max\{2,1/\beta\}$ sufficiently large so that $(\alpha-{\beta}/{ s})q_1>2$ and $\beta q_2/s>2$. Then
$$
\|\varphi\|_{L^{(\alpha-\beta/s)q_1}}^{\alpha-\beta/s}\||\nabla|^s \varphi\|_{L^{{\beta q_2}/{s}}}^{\beta/s}\lesssim \|\varphi\|_{H^{1/2}}^{\alpha-\beta/s}\|\varphi\|_{H^{s+1/2}}^{\beta/s}\lesssim \|\varphi\|_{H^{3/2}}^{\alpha}.
$$
This completes the proof.
\end{proof}

\begin{proof}[Proof of Proposition \ref{proposition_e}]In the following argument, $C_\nu$ denotes constants depending on $\|\varphi\|_{H^\nu}$ but not on $\ep,s$, which may vary line to line. 
We denote for short $$\Phi=|\varphi|^{2\sigma},\quad \gamma=\gamma(1/s)=\frac{\lambda  s^{\sigma-1}}{1-\sigma},\quad \mathcal R_1=\mathcal R-\frac{is}{2}\Delta=e^{is\Delta/2}-I-\frac{is}{2}\Delta.$$
The following  formulas are used frequently in the proof:
\begin{align}
e^{i\gamma\Phi }f&=\nabla^{-1}e^{i\gamma\Phi }\left\{i\gamma f \nabla \Phi+\nabla f\right\}\label{e_1},\\
&=\nabla^{-4}e^{i\gamma\Phi}\left\{\gamma^4 f(\nabla\Phi)^4-i\gamma^3\nabla\left( f(\nabla\Phi)^3\right)\right\}\nonumber\\
&\quad -\gamma^2\nabla^{-3}e^{i\gamma\Phi}\nabla\left(f(\nabla\Phi)^2\right)\nonumber\\
&\quad +\nabla^{-2}e^{i\gamma\Phi}\left\{2i\gamma (\nabla f)\nabla \Phi+i\gamma f\Delta\Phi+\Delta f\right\},
\label{e_2}\\
\nabla(e^{i\gamma\Phi }f)&=e^{i\gamma\Phi }\left\{i\gamma f \nabla \Phi+\nabla f\right\}\label{e_3},\\
&=\nabla^{-2}e^{i\gamma\Phi}\left\{-i\gamma^3 f(\nabla\Phi)^3-\gamma^2\nabla\left( f(\nabla\Phi)^2\right)\right\}\nonumber\\
&\quad +\nabla^{-1}e^{i\gamma\Phi}\left\{2i\gamma (\nabla f)\nabla \Phi+i\gamma f\Delta\Phi+\Delta f\right\}.
\label{e_4}
\end{align}
\eqref{e_1} is easily verified. Then
\begin{align*}
e^{i\gamma\Phi}f=\nabla^{-2}e^{i\gamma\Phi}\{-\gamma^2f(\nabla\Phi)^2+2i\gamma (\nabla f)\nabla \Phi+i\gamma f\Delta \Phi+\Delta f\},
\end{align*}
where the term $-\gamma^2e^{i\gamma\Phi}f(\nabla\Phi)^2$ is further rewritten as
\begin{align*}
-\gamma^2e^{i\gamma\Phi}f(\nabla\Phi)^2
&=\nabla^{-1}e^{i\gamma\Phi}\left\{-i\gamma^3 f(\nabla\Phi)^3-\gamma^2\nabla\left(f(\nabla\Phi)^2\right)\right\}\\
&=\nabla^{-2}e^{i\gamma\Phi}\left\{\gamma^4f(\nabla\Phi)^4-i\gamma^3 \nabla\left(f(\nabla\Phi)^3\right)\right\}-\gamma^2 \nabla^{-1}e^{i\gamma\Phi}\nabla\left(f(\nabla\Phi)^2\right).
\end{align*}Hence \eqref{e_2} follows. \eqref{e_3} and \eqref{e_4} can be verified similarly. Note that $\mathcal R(s) \nabla^{-j}$ for $0\le j\le 2$ and $\mathcal R_1(s) \nabla^{-k}$ for $2\le k\le 4$ are bounded on $L^2(\R)$ thanks to Lemma \ref{lemma_R}, so the singularities of the inverse power $\nabla^{-j}$ at the zero frequency appeared in these formulas do not cause a problem. 

Now we prove \eqref{proposition_e_1} for $1\le \nu<2$. Lemma \ref{lemma_Holder} with $\alpha=2\sigma>1$ and $\beta=\nu-1<1$ implies
$$
\|\varphi\nabla\Phi \|_{H^{\nu-1}}=2\sigma\||\varphi|^{2\sigma-2}\varphi\Re[\overline{\varphi}\nabla\varphi]\|_{H^{\nu-1}}\lesssim \|\varphi\|_{H^\nu}^{2\sigma+1}.
$$
Recalling that $e_1=\mathcal R v_{\mathrm p}=\mathcal R e^{i\gamma\Phi }\varphi$, \eqref{e_1} with $f=\varphi$ and Lemmas \ref{lemma_R} and \ref{lemma_v_p} then imply
\begin{align}
\|e_1\|
\lesssim s^{\nu/2}\left(s^{\sigma-1}\|e^{i\gamma \Phi}\varphi\nabla \Phi\|_{\dot H^{\nu-1}}+\|e^{i\gamma \Phi}\nabla \varphi\|_{\dot H^{\nu-1}}\right)
\le C_\nu s^{\nu(\sigma-1/2)}.
\label{e_5}
\end{align}
Similarly, it follows from \eqref{e_3} and Lemmas \ref{lemma_R} and \ref{lemma_v_p} that
\begin{align*}
\|\nabla e_1\|\lesssim s^{(\nu-1)/2}\left(s^{\sigma-1}\|e^{i\gamma \Phi}\varphi\nabla \Phi\|_{\dot H^{\nu-1}}+\|e^{i\gamma \Phi}\nabla \varphi\|_{\dot H^{\nu-1}}\right)
\le C_\nu s^{\nu(\sigma-1/2)-1/2}. 
\end{align*}
Since $0<\sigma<1$, these estimates imply \eqref{proposition_e_1} for $e_1$ as
\begin{align*}
Q[e_1](s)
&\lesssim \|\nabla e_1\|+ s^{-\sigma/2+\delta/2}\|e_1\|+ s^{\sigma/2-1}\|\varphi\|_{L^\infty}^\sigma\|e_1\|\\
&\le C_\nu s^{\nu(\sigma-1/2)}(s^{-1/2}+s^{-\sigma/2+\delta/2}
+s^{\sigma/2-1}
)\\
&\le C_\nu s^{\nu(\sigma-1/2)+\sigma/2-1}.
\end{align*}
To deal with $e_2$, we first rewrite \eqref{e2} as
\begin{align}
\nonumber
e_2&=- \mathcal R |\varphi|^{2\sigma}v_{\mathrm p}+(\sigma+1)|\varphi|^{2\sigma}e_1+\sigma |\varphi|^{2\sigma-2}v_{\mathrm p}^2\overline{e_1}\\
&=\left(\Phi \mathcal R-\mathcal R\Phi \right)v_{\mathrm p}+2\sigma |\varphi|^{2\sigma-2}v_{\mathrm p}\Re\left[\overline{v_{\mathrm p}}\mathcal Rv_{\mathrm p}\right]
\label{e_6}
\end{align}
and compute
\begin{align}
\nonumber
\nabla e_2
&=\left\{(\nabla \Phi )\mathcal R-\mathcal R(\nabla \Phi )\right\}v_{\mathrm p}+\left(\Phi \mathcal R-\mathcal R\Phi \right)\nabla v_{\mathrm p}
+2\sigma \nabla(|\varphi|^{2\sigma-2}v_{\mathrm p})\Re\left[\overline{v_{\mathrm p}}\mathcal Rv_{\mathrm p}\right]\\
&\quad +2\sigma |\varphi|^{2\sigma-2}v_{\mathrm p}\Re\left[\overline{\nabla v_{\mathrm p}}\mathcal Rv_{\mathrm p}\right]
+2\sigma |\varphi|^{2\sigma-2}v_{\mathrm p}\Re\left[\overline{v_{\mathrm p}}\mathcal R\nabla v_{\mathrm p}\right]. 
\label{e_7}
\end{align}
Since $\Phi \varphi\in H^{\nu}(\R)$ and $\varphi\nabla\Phi ,\varphi\Phi \nabla\Phi ,\Phi \nabla\varphi \in H^{\nu-1}(\R)$ by Lemma \ref{lemma_Holder}, the same argument as above based on Lemmas \ref{lemma_R} and \ref{lemma_v_p} implies
\begin{align*}
\|\mathcal R\Phi v_{\mathrm p}\|
&\le C_\nu   s^{\nu(\sigma-1/2)},\\
\|\mathcal R(\nabla \Phi )v_{\mathrm p}\|
&\lesssim s^{(\nu-1)/2}\|e^{i\gamma \Phi }\varphi\nabla \Phi \|_{\dot H^{\nu-1}}
\le C_\nu   s^{(\nu-1)(\sigma-1/2)},\\
\|\mathcal R\nabla v_{\mathrm p}\|
&\lesssim s^{(\nu-1)/2}\left(s^{\sigma-1}\|e^{i\gamma \Phi }\varphi\nabla \Phi \|_{\dot H^{\nu-1}}+\|e^{i\gamma \Phi }\nabla \varphi\|_{\dot H^{\nu-1}}\right)
\le C_\nu   s^{\nu(\sigma-1/2)-1/2},\\
\|\mathcal R\Phi \nabla v_{\mathrm p}\|
&\lesssim s^{(\nu-1)/2}\left(s^{\sigma-1}\|e^{i\gamma \Phi }\varphi|\nabla \Phi |^2\|_{\dot H^{\nu-1}}+\|e^{i\gamma \Phi }\Phi \nabla \varphi\|_{\dot H^{\nu-1}}\right)
\le C_\nu   s^{\nu(\sigma-1/2)-1/2},\\
\|\mathcal Rv_{\mathrm p}\|_{L^\infty}
&\lesssim \|\mathcal Rv_{\mathrm p}\|^{1/2}\|\mathcal R\nabla v_{\mathrm p}\|^{1/2}\le C_\nu   s^{\nu(\sigma-1/2)-1/4}.
\end{align*}
Hence
\begin{align*}
\|e_2\|
&\lesssim \|\Phi \|_{L^\infty}\|\mathcal Rv_{\mathrm p}\|+\|\mathcal R\Phi v_{\mathrm p}\|\le C_\nu   s^{\nu(\sigma-1/2)},\\
\|\nabla e_2\|
&\lesssim \|\nabla\Phi \|\|\mathcal Rv_{\mathrm p}\|_{L^\infty}+\|\mathcal R(\nabla \Phi )v_{\mathrm p}\|\\
\nonumber
&\quad
+\|\Phi \|_{L^\infty}\|\mathcal R\nabla v_{\mathrm p}\|+\|\mathcal R\Phi \nabla v_{\mathrm p}\|+
\|\nabla(|\varphi|^{2\sigma-2}v_{\mathrm p})\varphi\| \|\mathcal Rv_{\mathrm p}\|_{L^\infty}\\
&\le C_\nu  (s^{\nu(\sigma-1/2)-1/4}+s^{(\nu-1)(\sigma-1/2)}+s^{\nu(\sigma-1/2)-1/2}+s^{\nu(\sigma-1/2)+\sigma-5/4})\\
&\le C_\nu  s^{\nu(\sigma-1/2)-1/2}
\end{align*}
provided $3/4<\sigma<1$. Then \eqref{proposition_e_1} for $e_2$ follows as in the case of $e_1$. 

Next we assume $\varphi\in H^2(\R)$ and prove \eqref{proposition_e_2}. To obtain the desired bound for $\|e_1\|$, \eqref{e_5} with $\nu=2$ is sufficient. 
To deal with the term $|\varphi|^{\sigma-1}\Re[\overline{v_{\mathrm p}}e_1]$, we decompose $\Re[\overline{v_{\mathrm p}}e_1]$ into $\Re[\overline{v_{\mathrm p}}\mathcal R_1v_{\mathrm p}]$ and $\Re[\overline{v_{\mathrm p}}\frac{is}{2}\Delta v_{\mathrm p}]$, where $\mathcal R_1=\mathcal R-\frac{is}{2}\Delta$. For the first term, \eqref{e_2} with $f=\varphi$ implies
\begin{align}
v_{\mathrm p}
&=\nabla^{-4}e^{i\gamma\Phi}\left\{\gamma^4 \varphi(\nabla\Phi)^4-i\gamma^3\nabla\left( \varphi(\nabla\Phi)^3\right)\right\}\nonumber
 -\gamma^2\nabla^{-3}e^{i\gamma\Phi}\nabla\left(\varphi(\nabla\Phi)^2\right)\nonumber\\
&\quad +\nabla^{-2}e^{i\gamma\Phi}\left\{2i\gamma (\nabla \varphi)\nabla \Phi+i\gamma \varphi\Delta\Phi+\Delta \varphi\right\}.
\label{e_8}
\end{align}
Here all of $\varphi(\nabla\Phi)^4$, $\nabla( \varphi(\nabla\Phi)^3)$, $\nabla(\varphi(\nabla\Phi)^2)$, $(\nabla \varphi)\nabla \Phi$ and $\varphi\Delta\Phi$ belong to $L^2(\R)$. Indeed, since 
\begin{align}
\nabla \Phi&=2\sigma|\varphi|^{2\sigma-2}\Re[\overline \varphi \nabla \varphi],\nonumber\\
\Delta \Phi&=2\sigma(2\sigma-2)|\varphi|^{2\sigma-4}\left(\Re[\overline \varphi \nabla \varphi]\right)^2+2\sigma |\varphi|^{2\sigma-2}\left(|\nabla\varphi|^2+\Re[\overline \varphi \Delta \varphi]\right),
\label{e_8_0}
\end{align}
we have 
\begin{align*}
\|\nabla \Phi\|_{L^\infty}&\lesssim  \|\varphi\|_{L^\infty}^{2\sigma-1}\|\nabla\varphi\|_{L^\infty}\le C_2,\\
\|\varphi\Delta\Phi\|&\lesssim \|\varphi\|_{L^\infty}^{2\sigma-1}\||\nabla\varphi|^2\|+\|\varphi\|_{L^\infty}^{2\sigma}\|\Delta\varphi\|\le C_2,\\
\|\varphi(\nabla\Phi)^4\|
&\lesssim \|\varphi\|\|\nabla \Phi\|_{L^\infty}^4\le C_2,\\
\|\nabla\left( \varphi(\nabla\Phi)^j\right)\|
&\lesssim \|\nabla\varphi\|\|\nabla\Phi\|_{L^\infty}^j+\|\nabla \Phi\|_{L^\infty}^{j-1}\|\varphi\Delta\Phi\|\le C_2,\quad j=2,3.
\end{align*}
Moreover, 
$
\|\mathcal R_1\nabla^{-j}\|_{L^2\to L^2}\lesssim s^{j/2}$ for $ 2\le j\le 4$ by Lemma \ref{lemma_R}. 
Hence, these estimates and \eqref{e_8} imply
\begin{align}
\|\mathcal R_1v_{\mathrm p}\|
&\lesssim s^2\left(s^{4(\sigma-1)}\|\varphi(\nabla \Phi)^4\|+s^{3(\sigma-1)}\|\nabla(\varphi (\nabla \Phi)^3)\|\right)+s^{3/2+2(\sigma-1)}\|\nabla(\varphi (\nabla \Phi)^2)\|\nonumber\\
&\quad+s\left(s^{\sigma-1}(\|(\nabla\varphi)\nabla\Phi\|+\|\varphi\Delta\Phi\|)+\|\Delta\varphi\|\right)\nonumber\\
&\le C_1  s^{4\sigma-2}+C_2s^{3\sigma-1}+C_2 s^{2\sigma-1/2}+C_2s^\sigma
\le C_2 s^\sigma
\label{e_9}
\end{align}
provided $2/3\le\sigma\le1$. 
Besides, the second term $\Re[\overline{v_{\mathrm p}}\frac{is}{2}\Delta v_{\mathrm p}]$ can be computed explicitly as
\begin{align}
-\frac{s}{2}\Im[\overline{v_{\mathrm p}}\Delta v_{\mathrm p}]
&=-\frac{s}{2}\Im\left[-\gamma^2|\varphi|^2|\nabla\Phi |^2+2i\gamma\overline\varphi\nabla\varphi\nabla\Phi +i\gamma|\varphi|^2\Delta\Phi +\overline\varphi\Delta\varphi\right]\nonumber
\\
&=-s\gamma\Re[\overline\varphi\nabla\varphi\nabla\Phi]-\frac{s\gamma}{2}|\varphi|^2\Delta\Phi-\frac s2\Im[\overline \varphi\Delta\varphi],
\label{e_10}
\end{align}
which implies
\begin{align}
\label{e_11}
\left\||\varphi|^{\sigma-1}\Re\left[\overline{v_{\mathrm p}}\frac{is}{2}\Delta v_{\mathrm p}\right]\right\|\le C_2   s^{\sigma}.
\end{align}
These estimates \eqref{e_9} and \eqref{e_11} imply
\begin{align}
 s^{\sigma/2-1}\||\varphi|^{\sigma-1}\Re[\overline{v_{\mathrm p}}e_1]\|\le C_2  s^{3\sigma/2-1}.
\label{e_12}
\end{align}
To estimate $\|\nabla e_1\|$, we use the formula \eqref{e_4} with $f=\varphi$ and Lemma \ref{lemma_R} to observe
\begin{align}
\|\nabla e_1\|
\le  C_1s^{1+3(\sigma-1)}+C_2s^{1+2(\sigma-1)}+C_2s^{1/2+\sigma-1}+C_2s^{1/2}
\label{e_13}
\end{align}
Now \eqref{proposition_e_2} follows from \eqref{e_5} with $s=2$, \eqref{e_12} and \eqref{e_13} since
$$\min\left\{2\sigma-1-\frac \sigma2+\frac\delta2,\ \frac{3\sigma}{2}-1,\ 2\sigma-1,\ \sigma-\frac12, \frac12\right\}=\frac{3\sigma}{2}-1$$
for $2/3<\sigma<1$ and $\delta>0$. Note that the decay rate $s^{\sigma}$ in \eqref{e_11} is optimal due to the formula \eqref{e_10}, so is the rate $s^{3\sigma/2-1}$ in \eqref{proposition_e_2}. 

Finally, we prove \eqref{proposition_e_3}. 
To show the desired estimates for $e_2$ and $|\varphi|^{\sigma-1}\Re[\overline{v_{\mathrm p}}i e_2]$, we write
\begin{align*}
e_2
&=\left(\Phi \frac{is}{2}\Delta-\frac{is}{2}\Delta\Phi \right)v_{\mathrm p}
+\left(\Phi \mathcal R_1-\mathcal R_1\Phi \right)v_{\mathrm p}\\
&\quad -s\sigma |\varphi|^{2\sigma-2}v_{\mathrm p}\Im \left[\overline{v_{\mathrm p}} \Delta v_{\mathrm p}\right]
+2\sigma |\varphi|^{2\sigma-2}v_{\mathrm p}\Re\left[\overline{v_{\mathrm p}}\mathcal R_1v_{\mathrm p}\right]\\
&=:\mathrm{I}_1+\mathrm{I}_2+\mathrm{I}_3+\mathrm{I}_4.
\end{align*}
The first term $\mathrm{I}_1=\left(\Phi \frac{is}{2}\Delta-\frac{is}{2}\Delta\Phi \right)v_{\mathrm p}$ is further  calculated as
\begin{align*}
\mathrm{I}_1
=-is(\nabla\Phi )\nabla v_{\mathrm p}-\frac{is}{2}(\Delta\Phi )v_{\mathrm p}
=se^{i\gamma \Phi }\left(\gamma \varphi|\nabla\Phi |^2-i(\nabla\Phi) \nabla \varphi\right)-\frac{is}{2}(\Delta\Phi )v_{\mathrm p},
\end{align*}
which implies 
$\Im[\overline{v_{\mathrm p}}\mathrm{I}_1]=-s\Im[i\overline{\varphi}\nabla\Phi \nabla \varphi]-\frac s2\Im[i|\varphi|^2\Delta\Phi ]$. Hence
\begin{align}
\|\mathrm{I}_1\|
&\lesssim
s\left(s^{\sigma-1}\|\varphi|\nabla\Phi |^2\|+\|(\nabla\Phi )\nabla\varphi\|\right)
+s\|\varphi\Delta\Phi \|
\le C_2  s^{\sigma},\label{e_14}\\
\||\varphi|^{\sigma-1}\Im[\overline{v_{\mathrm p}}\mathrm{I}_1]\|
&\lesssim s\left(\||\varphi|^\sigma|\nabla\Phi |\nabla\varphi\|+\||\varphi|^{\sigma+1}\Delta\Phi \|\right)
\le C_2  s.
\label{e_15}
\end{align}
By \eqref{e_10}, the third term $\mathrm{I}_3=-s\sigma |\varphi|^{2\sigma-2}v_{\mathrm p}\Im \left[\overline{v_{\mathrm p}} \Delta v_{\mathrm p}\right]$ satisfies 
\begin{align}
\|\mathrm{I}_3\|\le C_2  s^\sigma,\quad \Im [\overline{v_{\mathrm p}}\mathrm{I}_3]=0.
\label{e_16}
\end{align}
For the fourth term $\mathrm{I}_4=2\sigma |\varphi|^{2\sigma-2}v_{\mathrm p}\Re\left[\overline{v_{\mathrm p}}\mathcal R_1v_{\mathrm p}\right]$, \eqref{e_9} implies
\begin{align}
\label{e_17}
\|\mathrm{I}_4\|\le C_2 s^\sigma,\quad \Im [\overline{v_{\mathrm p}}\mathrm{I}_4]=0.
\end{align}
To deal with the second term $\mathrm{I}_2=\left(\Phi \mathcal R_1-\mathcal R_1\Phi \right)v_{\mathrm p}$, we observe by \eqref{e_8} and \eqref{e_9} that
\begin{align*}
\|\mathcal R_1v_{\mathrm p}\|\le C_2(s^{4\sigma-2}+s^{3\sigma-1}+s^{2\sigma-1/2}+s)+\gamma \|\mathcal R_1\nabla^{-2}e^{i\gamma\Phi}\left(2 (\nabla\varphi)\nabla\Phi+\varphi\Delta\Phi\right)\|,
\end{align*}
where the term $C_2(s^{4\sigma-2}+s^{3\sigma-1}+s^{2\sigma-1/2}+s)$ is dominated by $C_2s$ provided $\sigma\ge 3/4$. On the other hand, by \eqref{e_8_0}, the term $2 (\nabla\varphi)\nabla\Phi+\varphi\Delta\Phi$  can  be written in the form
\begin{align*}
&|\varphi|^{2\sigma-2}\left\{a_1\overline \varphi (\nabla\varphi)^2+a_2\varphi|\nabla\varphi|^2\right\}
+a_3|\varphi|^{2\sigma-4}\varphi^3(\overline{\nabla\varphi})^2
+a_4|\varphi|^{2\sigma-2}\varphi\Re[\overline{\varphi}{\Delta\varphi}]
\end{align*}
with some constants $a_1,...a_4$. For short, we set 
$$
\mathrm{I}_{21}:=|\varphi|^{2\sigma-2}\left\{a_1\overline \varphi (\nabla\varphi)^2+a_2\varphi|\nabla\varphi|^2\right\}
+a_3|\varphi|^{2\sigma-4}\varphi^3(\overline{\nabla\varphi})^2,\quad 
\mathrm{I}_{22}:=a_4|\varphi|^{2\sigma-2}\varphi\Re[\overline{\varphi}{\Delta\varphi}],
$$
which satisfy
$
|\mathrm{I}_{21}|\lesssim |\varphi|^{2\sigma-1}|\nabla\varphi|^2$ and $|\mathrm{I}_{22}|\lesssim |\varphi|^{2\sigma}|\Delta\varphi|
$,  and write
\begin{align*}
e^{i\gamma\Phi}\left(2 (\nabla\varphi)\nabla\Phi+\varphi\Delta\Phi\right)
=\mathrm{I}_{21}+(e^{i\gamma\Phi}-1)\mathrm{I}_{21}+e^{i\gamma\Phi}\mathrm{I}_{22}.
\end{align*}
Since $0<2-2\sigma<2\sigma-1<1$ if $3/4<\sigma<1$, by Lemmas \ref{lemma_R} and \ref{lemma_Holder} with $(\alpha,\beta)=(2\sigma-1,2-2\sigma)$, the first term $\mathrm{I}_{21}$ satisfies
\begin{align}
\gamma \|\mathcal R_1\nabla^{-2}\mathrm{I}_{21}\|
&\lesssim s^{\sigma-1+(2-\sigma)}\|\mathrm{I}_{21}\|_{\dot H^{2-2\sigma}}\nonumber\\
&\le s\|\varphi\|_{H^{3/2}}^{2\sigma-1}\left(\||\nabla \varphi|^2\|_{H^{2-2\sigma}}+\|(\nabla\varphi)^2\|_{H^{2-2\sigma}}\right)
\le C_2s.
\label{e_18}
\end{align}
For the term $(e^{i\gamma\Phi}-1)\mathrm{I}_{21}$, thanks to the bound $|e^{i\gamma\Phi}-1|\lesssim \gamma\Phi$, we obtain
\begin{align*}
|\nabla\{(e^{i\gamma\Phi}-1)\mathrm{I}_{21}\}|
\lesssim \gamma\Phi|\nabla \mathrm{I}_{21}|+\gamma |\nabla \Phi||\mathrm{I}_{21}|
\lesssim s^{\sigma-1}\left(|\varphi|^{4\sigma-1}|\nabla\varphi||\Delta\varphi|+|\varphi|^{4\sigma-2}|\nabla\varphi|^3\right)
\end{align*}
and hence
\begin{align}
\gamma \|\mathcal R_1\nabla^{-2}(e^{i\gamma\Phi}-1)\mathrm{I}_{21}\|
\lesssim s^{\sigma-1+3/2}\|\nabla\{(e^{i\gamma\Phi}-1)\mathrm{I}_{21}\}\|\le C_2 s^{2\sigma-1/2}\le C_2s
\label{e_19}
\end{align}
whenever $\sigma\ge 3/4$. 
The term $e^{i\gamma\Phi}\mathrm{I}_{22}$ is estimated by using Lemma  \ref{lemma_R} and \ref{lemma_v_p} as 
\begin{align}
\gamma \|\mathcal R_1\nabla^{-2}e^{i\gamma\Phi}\mathrm{I}_{22}\|\lesssim s^{\sigma-1+\nu/2}\|e^{i\gamma\Phi}\mathrm{I}_{22}\|_{\dot H^{\nu-2}}\le 
C_\nu s^{\nu(\sigma-1/2)+1-\sigma}
\label{e_20}
\end{align}
for $2\le \nu\le 3$. In particular, if we choose $\nu=\frac{\sigma}{\sigma-1/2}$, which belongs to $[2,3]$ if $3/4\le \sigma\le1$, then 
$$
\gamma \|\mathcal R_1\nabla^{-2}e^{i\gamma\Phi}\mathrm{I}_{22}\|\le C_{\frac{\sigma}{\sigma-1/2}}s.$$
By \eqref{e_18}--\eqref{e_20}, we obtain
\begin{align}
\|\mathcal R_1v_{\mathrm p}\|\le C_{\nu}s^{\nu(\sigma-1/2)+1-\sigma},\quad 2\le \nu\le \frac{\sigma}{\sigma-1/2}.
\label{e_21}
\end{align}
Since $\Phi\varphi\in H^2(\R)$, the same argument also provides the same bound for $\|\mathcal R_1\Phi v_{\mathrm p}\|$. Hence
\begin{align}
\|\mathrm{I}_2\|\le C_{\nu}s^{\nu(\sigma-1/2)+1-\sigma},\quad 2\le \nu\le \frac{\sigma}{\sigma-1/2}.
\label{e_22}
\end{align}
It follows from \eqref{e_14}--\eqref{e_17} and \eqref{e_21} that, for $2\le \nu\le {\sigma}/{(\sigma-1/2)}$, 
\begin{align}
s^{-\sigma/2+\delta/2}\|e_2\|+s^{\sigma/2-1}\||\varphi|^{\sigma-1}\Im[\overline{v_{\mathrm p}}e_2]\|\le C_\nu s^{\nu(\sigma-1/2)-\sigma/2}. 
\label{e_23}
\end{align}

It remains to deal with $\nabla e_2$ given by the formula \eqref{e_7}. For the term $\left\{(\nabla \Phi )\mathcal R-\mathcal R(\nabla \Phi )\right\}v_{\mathrm p}$ in \eqref{e_7}, we use \eqref{e_1} with $f=\varphi,\nabla\Phi$ to observe
\begin{align}
&\|\left\{(\nabla \Phi )\mathcal R-\mathcal R(\nabla \Phi )\right\}v_{\mathrm p}\|\nonumber\\
&\lesssim \|\nabla \Phi\|_{L^\infty}\|\mathcal R\nabla^{-1}\|_{\dot H^1\to L^2}\left(s^{\sigma-1}e^{i\gamma \Phi}\varphi\nabla \Phi\|_{\dot H^1}+\|e^{i\gamma \Phi}\nabla\varphi\|_{\dot H^1}\right)\nonumber\\
&\quad +s^{\sigma-1}\|\mathcal R\nabla^{-1}\|_{\dot H^1\to L^2}\|e^{i\gamma \Phi}\varphi(\nabla \Phi)^2\|_{\dot H^1}+\|\mathcal R\nabla^{-1}\|_{L^2\to L^2}\|e^{i\gamma \Phi}\nabla(\varphi\nabla\Phi)\|
\nonumber\\
&\le C_2 (s^{1+2(\sigma-1)}+s^{1+\sigma-1}+s^{1+2(\sigma-1)}+s^{1/2})
\le C_2s^{\sigma/2}
\label{e_24}
\end{align}
provided $\sigma\ge 2/3$. 
We next compute 
\begin{align*}
\left(\Phi \mathcal R-\mathcal R\Phi \right)\nabla v_{\mathrm p}
=\left(\Phi \mathcal R-\mathcal R\Phi \right)e^{i\gamma\Phi }(i\gamma \varphi\nabla\Phi +\nabla\varphi),
\end{align*}
where the term $\left(\Phi \mathcal R-\mathcal R\Phi \right)e^{i\gamma\Phi }\nabla\varphi$ satisfies the same bound as \eqref{e_24}. Moreover, the term $i\gamma \left(\Phi \mathcal R-\mathcal R\Phi \right)e^{i\gamma\Phi }\varphi\nabla\Phi $ is further calculated as
\begin{align*}
&i\gamma \Phi \mathcal R(\nabla+1)^{-1}(\nabla+1)e^{i\gamma\Phi }\varphi\nabla\Phi 
-i\gamma \mathcal R(\nabla+1)^{-1}(\nabla+1)\Phi e^{i\gamma\Phi }\varphi\nabla\Phi \\
&=-\gamma^2 \left(\Phi \mathcal R(\nabla+1)^{-1}-\mathcal R(\nabla+1)^{-1}\Phi \right)e^{i\gamma\Phi } \varphi|\nabla\Phi |^2\\
&\quad +i\gamma \Phi \mathcal R(\nabla+1)^{-1}e^{i\gamma\Phi }(\nabla+1)(\varphi\nabla\Phi )
-i\gamma \mathcal R(\nabla+1)^{-1}e^{i\gamma\Phi }(\nabla+1)(\varphi\Phi \nabla\Phi )\\
&=\mathrm{II}_1+\mathrm{II}_2+\mathrm{II}_3,
\end{align*}
where we set for short
\begin{align*}
\mathrm{II}_1&=-\gamma^2 \left(\Phi \frac{is}{2}\Delta(\nabla+1)^{-1}-\frac{is}{2}\Delta(\nabla+1)^{-1}\Phi \right)e^{i\gamma\Phi } \varphi|\nabla\Phi |^2,\\
\mathrm{II}_2&=-\gamma^2 \left(\Phi \mathcal R_1(\nabla+1)^{-1}-\mathcal R_1(\nabla+1)^{-1}\Phi \right)e^{i\gamma\Phi } \varphi|\nabla\Phi |^2,\\
\mathrm{II}_3&=i\gamma \Phi \mathcal R(\nabla+1)^{-1}e^{i\gamma\Phi }(\nabla+1)(\varphi\nabla\Phi )
-i\gamma \mathcal R(\nabla+1)^{-1}e^{i\gamma\Phi }(\nabla+1)(\varphi\Phi \nabla\Phi ).
\end{align*}
Since 
$$
\Delta(\nabla+1)^{-1}=\nabla(\nabla+1-1)(\nabla+1)^{-1}=\nabla-\nabla(\nabla+1)^{-1}=\nabla-1+(\nabla+1)^{-1},
$$
we have, for any $f\in L^2(\R)$ and $g\in H^1(\R)$ satisfying $\nabla g\in L^\infty(\R)$, 
\begin{align}
\left\{g\Delta(\nabla+1)^{-1}-\Delta(\nabla+1)^{-1}g\right\}f
=-(\nabla g)f+[g,(\nabla+1)^{-1}]f.
\label{e_25}
\end{align}
Since $\|[g,(\nabla+1)^{-1}]\|_{L^2\to L^2}\le 2\|g\|_{L^\infty}$, applying this formula with $g=\Phi$ to $\mathrm{II}_1$ shows
\begin{align}
\|\mathrm{II}_1\|\lesssim s^{2\sigma-1}\left(\|\nabla\Phi \|_{L^\infty}+\|\Phi \|_{L^\infty}\right)\|e^{i\gamma\Phi } \varphi|\nabla\Phi |^2\|\le C_2 s^{2\sigma-1}.
\label{e_26}
\end{align}
For the term $\mathrm{II}_2$, since $|\varphi||\nabla\Phi |^2\lesssim |\varphi|^{4\sigma-1}|\nabla\varphi|^2$ and thus $\varphi|\nabla\Phi |^2\in H^1(\R)$, we can rewrite it as
\begin{align}
\mathrm{II}_2
&=-\gamma^2 \Phi \mathcal R_1(\nabla+1)^{-2}e^{i\gamma\Phi }\left\{i\gamma \varphi|\nabla\Phi |^3+(\nabla+1)(\varphi|\nabla\Phi |^2)\right\}\nonumber\\
&\quad 
+\gamma^2 \mathcal R_1(\nabla+1)^{-2}e^{i\gamma\Phi }\left\{i\gamma \Phi \varphi|\nabla\Phi |^3+(\nabla+1)(\Phi \varphi|\nabla\Phi |^2)\right\}. 
\label{e_27}\end{align}
Then Lemmas \ref{lemma_R} and \ref{lemma_v_p} imply
\begin{align*}
\|\mathrm{II}_2\|
&\lesssim 
s^{3(\sigma-1)+3/2}\left(\|e^{i\gamma\Phi }\varphi|\nabla\Phi |^3\|_{\dot H^1}+\|e^{i\gamma\Phi }\Phi \varphi|\nabla\Phi |^3\|_{\dot H^1}\right)\nonumber\\
&\quad +s^{2(\sigma-1)+1}\left(\|e^{i\gamma\Phi }(\nabla+1)(\varphi|\nabla\Phi |^2)\|+\|e^{i\gamma\Phi }(\nabla+1)(\Phi \varphi|\nabla\Phi |^2)\|\right)\nonumber\\
&\le C_2(s^{4\sigma-5/2}+s^{2\sigma-1}).
\end{align*}
To deal with the term $\mathrm{II}_3$, we observe by \eqref{e_8_0} that $(\nabla+1)(\varphi\nabla\Phi )$ is a linear combination of 
\begin{align*}
&|\varphi|^{2\sigma-2}\overline\varphi(\nabla\varphi)^2,\quad 
|\varphi|^{2\sigma-2}\varphi|\nabla\varphi|^2,\quad 
|\varphi|^{2\sigma-4}\varphi^3(\overline{\nabla\varphi})^2,\\
&|\varphi|^{2\sigma-2}\varphi\Re[\overline\varphi\Delta\varphi],\quad
|\varphi|^{2\sigma-2}\varphi\Re[\overline\varphi\nabla\varphi].
\end{align*}
Using Lemma \ref{lemma_Holder} with $(\alpha,\beta,m)=(2\sigma-1,\nu-2,1)$ or $(2\sigma,\nu-2,0)$, we thus have $(\nabla+1)(\varphi\nabla\Phi )\in H^{\nu-2}(\R)$ provided $\nu<2\sigma+1$. We also have $(\nabla+1)(\varphi\Phi\nabla\Phi )\in H^{\nu-2}(\R)$ by the same argument. Thus Lemmas \ref{lemma_R} and \ref{lemma_v_p} imply
\begin{align}
\|\mathrm{II}_3\|\le C_{\nu} s^{\sigma-1+1/2+(\nu-2)(\sigma-1/2)}=C_\nu s^{\nu(\sigma-1/2)+1/2-\sigma},\quad 2\le \nu<2\sigma+1.
\label{e_28}
\end{align}
In particular, choosing $\nu=\frac{3\sigma-1}{2\sigma-1}$ which belongs to $[2,2\sigma+1)$ if $3/4<\sigma\le1$, we have
$$
\|\mathrm{II}_3\|\le C_{\frac{3\sigma-1}{2\sigma-1}} s^{\sigma/2}. 
$$
The remaining three terms in \eqref{e_7} can essentially be dealt with by the same argument. Indeed, in the same manner as the proof of \eqref{e_9}, the term $\nabla(|\varphi|^{2\sigma-2}v_{\mathrm p})\Re\left[\overline{v_{\mathrm p}}\mathcal Rv_{\mathrm p}\right]$ is estimated as 
\begin{align}
&\|\nabla(|\varphi|^{2\sigma-2}v_{\mathrm p})\Re\left[\overline{v_{\mathrm p}}\mathcal Rv_{\mathrm p}\right]\|\nonumber\\
&\le \|\nabla(|\varphi|^{2\sigma-2}\varphi)\overline{\varphi}\|_{L^\infty}\|\mathcal Rv_{\mathrm p}\|
+\gamma \||\varphi|^{2\sigma-2}\varphi\nabla\Phi \|_{L^\infty}\|\Re\left[\overline{v_{\mathrm p}}\mathcal Rv_{\mathrm p}\right]\|\nonumber\\
&\le C_2s^{2\sigma-1}.
\label{e_29}
\end{align}
To deal with the sum of last two terms of the RHS of \eqref{e_7}, we compute
\begin{align*}
\overline{\nabla v_{\mathrm p}}\mathcal Rv_{\mathrm p}+\overline{v_{\mathrm p}}\mathcal R\nabla v_{\mathrm p}
&=\overline{\left(i\gamma v_{\mathrm p}\nabla\Phi +e^{i\gamma\Phi }\nabla\varphi\right)}\mathcal Rv_{\mathrm p}+\overline{v_{\mathrm p}}\mathcal R\left({i\gamma v_{\mathrm p}\nabla\Phi }+{e^{i\gamma\Phi }\nabla\varphi}\right)\\
&=-i\gamma \overline{v_{\mathrm p}}\left\{(\nabla\Phi )\mathcal R-\mathcal R(\nabla\Phi )\right\}v_{\mathrm p}
+e^{-i\gamma\Phi }\overline{(\nabla\varphi)}\mathcal Rv_{\mathrm p}
+\overline{v_{\mathrm p}}\mathcal Re^{i\gamma\Phi }\nabla\varphi,
\end{align*}
where the last two terms satisfy the desired bound as
\begin{align}
&\||\varphi|^{2\sigma-2}v_{\mathrm p}(e^{-i\gamma\Phi }\overline{(\nabla\varphi)}\mathcal Rv_{\mathrm p}
+\overline{v_{\mathrm p}}\mathcal Re^{i\gamma\Phi }\nabla\varphi)\|\nonumber\\
&\le C_2 \left(\|\mathcal Re^{i\gamma\Phi }\varphi\|+\|\mathcal Re^{i\gamma\Phi }\nabla\varphi\|\right)\nonumber\\
&\le C_2\left(\|\mathcal R\nabla^{-1}e^{i\gamma\Phi}(i\gamma \varphi\nabla\Phi+\nabla\varphi)\|+\|\mathcal R\nabla^{-1}e^{i\gamma\Phi}(i\gamma \nabla\varphi\nabla\Phi+\Delta\varphi)\|\right)\nonumber\\
&\le C_2 (s^{2\sigma-1}+s^{1/2}).
\label{e_30}
\end{align}
Moreover, $-i\gamma \overline{v_{\mathrm p}}\left\{(\nabla\Phi )\mathcal R-\mathcal R(\nabla\Phi )\right\}v_{\mathrm p}$ can be written in the form
\begin{align*}
&-i\gamma \overline{v_{\mathrm p}}\left\{(\nabla\Phi )\mathcal R-\mathcal R(\nabla\Phi )\right\}v_{\mathrm p}\\
&=\gamma^2 \overline{v_{\mathrm p}}\left\{(\nabla\Phi )\mathcal R(\nabla+1)^{-1}-\mathcal R(\nabla+1)^{-1}(\nabla\Phi )\right\}e^{i\gamma\Phi }\varphi\nabla\Phi \\
&\quad -i\gamma \overline{v_{\mathrm p}}\left\{(\nabla\Phi )\mathcal R(\nabla+1)^{-1}e^{i\gamma\Phi }(\nabla+1)\varphi- \mathcal R(\nabla+1)^{-1}e^{i\gamma\Phi }(\nabla+1)(\varphi\nabla\Phi )\right\}
\\
&=\gamma^2 \overline{v_{\mathrm p}}\left\{(\nabla\Phi )\frac{is}{2}\Delta(\nabla+1)^{-1}-\frac{is}{2}\Delta(\nabla+1)^{-1}(\nabla\Phi )\right\}e^{i\gamma\Phi }\varphi\nabla\Phi \\
&\quad +\gamma^2 \overline{v_{\mathrm p}}\left\{(\nabla\Phi )\mathcal R_1(\nabla+1)^{-1}-\mathcal R_1(\nabla+1)^{-1}(\nabla\Phi )\right\}e^{i\gamma\Phi }\varphi\nabla\Phi \\
&\quad -i\gamma \overline{v_{\mathrm p}}\left\{(\nabla\Phi )\mathcal R(\nabla+1)^{-1}e^{i\gamma\Phi }(\nabla+1)\varphi- \mathcal R(\nabla+1)^{-1}e^{i\gamma\Phi }(\nabla+1)(\varphi\nabla\Phi )\right\}\\
&=:\mathrm{III}_1+\mathrm{III}_2+\mathrm{III}_3.
\end{align*}
Since the formula \eqref{e_25} with $g=\nabla\Phi $ implies
\begin{align*}
\Re[\mathrm{III}_1]
&=\Re\left[-\frac{is\gamma^2}{2} \overline{v_{\mathrm p}}\left(\Delta\Phi +[\nabla\Phi ,(\nabla+1)^{-1}]\right)e^{i\gamma\Phi }\varphi\nabla\Phi \right]\\
&=\Re\left[-\frac{is\gamma^2}{2}\left(|\varphi|^2\Delta\Phi \nabla\Phi +\overline{v_{\mathrm p}}[\nabla\Phi ,(\nabla+1)^{-1}]e^{i\gamma\Phi }\varphi\nabla\Phi \right)\right]\\
&=\frac{s\gamma^2}{2}\Im\left[\overline{v_{\mathrm p}}[\nabla\Phi ,(\nabla+1)^{-1}]e^{i\gamma\Phi }\varphi\nabla\Phi \right].
\end{align*}
Hence
\begin{align*}
\||\varphi|^{2\sigma-2}v_{\mathrm p}\Re[\mathrm{III}_1]\|\lesssim s^{2\sigma-1}\|\varphi\|_{L^\infty}^{2\sigma}\|\nabla\Phi \|_{L^\infty}\|e^{i\gamma\Phi }\varphi\nabla\Phi \|\le C_2s^{2\sigma-1}. 
\end{align*}
Rewriting $\mathrm{III}_2$ as
\begin{align*}
&i\gamma^3 \overline{v_{\mathrm p}}\left\{(\nabla\Phi )\mathcal R_1(\nabla+1)^{-2}-\mathcal R_1(\nabla+1)^{-2}(\nabla\Phi )\right\}e^{i\gamma\Phi }\varphi|\nabla\Phi |^2\\
&\quad +\gamma^2 \overline{v_{\mathrm p}}\left\{(\nabla\Phi )\mathcal R_1(\nabla+1)^{-2}e^{i\gamma\Phi }(\nabla+1)(\varphi\nabla\Phi )-\mathcal R_1(\nabla+1)^{-2}e^{i\gamma\Phi }(\nabla+1)(\varphi|\nabla\Phi |^2)\right\},
\end{align*}
we observe by the same argument as that for $\mathrm{II}_2$ that
\begin{align*}
\||\varphi|^{2\sigma-2}v_{\mathrm p}\Re[\mathrm{III}_2]\|\le C_2(s^{4\sigma-5/2}+s^{2\sigma-1}).
\end{align*}
Moreover, by the same argument as that for $\mathrm{II}_3$, we have
\begin{align*}
\||\varphi|^{2\sigma-2}v_{\mathrm p}\Re[\mathrm{III}_3]\|\le C_\nu s^{\nu(\sigma-1/2)+1/2-\sigma},\quad 2\le \sigma<2\sigma+1.
\end{align*}
These estimates for $\mathrm{III}_1,\mathrm{III}_2,\mathrm{III}_3$, together with \eqref{e_23}, \eqref{e_26}--\eqref{e_30}, imply 
\begin{align}
\|\nabla e_2\|\le C_{\nu}s^{\nu(\sigma-1/2)+1/2-\sigma},\quad 2\le \nu\le \frac{3\sigma-1}{2\sigma-1}.
\label{e_31}
\end{align}
Since $-\sigma/2<1/2-\sigma$ and $(3\sigma-1)/(2\sigma-1)<\sigma/(\sigma-1/2)$ for $\sigma<1$, the desired bound \eqref{proposition_e_3} for $Q[ie_2]$ follows by \eqref{e_23} and \eqref{e_31}. This completes the proof. 
\end{proof}

\subsection{Proof of Proposition \ref{proposition_regularized_1}}
\label{subsection_existence_3}
Here we prove \eqref{proposition_regularized_1_1} and \eqref{proposition_regularized_1_2}, thereby completing the proof of Proposition \ref{proposition_regularized_1}. Let $\varphi\in H^\nu(\R)$. Recall that $Q[f](s)$ was defined by \eqref{Q}. Define for short
$$
\rho(s)= s^{-\beta+\delta}Q[w_\ep](s),
$$
where $\beta=\beta(\sigma,\nu)$ is defined by
\begin{align}
\label{p_2_1_0}
\beta(\sigma,\nu)=\begin{cases}
\beta^-(\sigma,\nu):=\nu(\sigma-1/2)+{3\sigma}/{2}-2&\text{if}\quad 1\le \nu<2,\\
\beta^+(\sigma,\nu):=\nu(\sigma-1/2)+\sigma/2-1&\text{if}\quad 2\le \nu<\frac{\sigma}{\sigma-1/2}.
\end{cases}
\end{align}
In order to show \eqref{proposition_regularized_1_1} and \eqref{proposition_regularized_1_2}, it is sufficient to prove there exist $C_0,\beta_0>0$ such that 
\begin{align}
\label{p_2_1_1}
1+\rho(s)\le C_0+C_0\int_\ep^s r^{\beta_0-1}\left(1+\rho(r)\right)^{2\sigma+1}dr
\end{align}
for any $0<\ep< s\le s_0$. Indeed, if we denote by $x(s)$ the RHS of \eqref{p_2_1_1}, then $x\in C^1([\ep,s_0])$ and $x'(s)\le C_0 s^{\beta_0-1}x(s)^{2\sigma+1}$ on $[\ep,s_0]$. Then
$$\int_\ep^s \frac{x'}{x^{2\sigma+1}}dr=\left[-\frac{1}{2\sigma x(s)^{2\sigma}}\right]_\ep^s=\frac{1}{2\sigma x(\ep)^{2\sigma}}-\frac{1}{2\sigma x(s)^{2\sigma}}\le C_0\int_\ep^s r^{\beta_0-1}ds\le\frac{C_0 s^{\beta_0}}{\beta_0},$$where $x(\ep)=C_0$. Therefore
$$
x(s)^{2\sigma}
\le\frac{\beta_0 C_0^{2\sigma}}{\beta_0-2\sigma C_0^{2\sigma+1} s^{\beta_0}}
$$
as long as the RHS is finite. By taking $s_0$ small if necessary in such a way that $4\sigma C_0^{2\sigma+1} s_0^{\beta_0}<\beta_0$, we have $\rho(s)\le 2^{1/(2\sigma)}C_0$ for all $0<\ep< s\le s_0$. 
Hence \eqref{proposition_regularized_1_1} follows. In order to obtain \eqref{proposition_regularized_1_2}, we use the NLS \eqref{NLS_w_ep} and the equation
$$
i\partial_s e_1=i\partial_s(e^{is\Delta/2}-I)v_{\mathrm p}=\lambda  s^{\sigma-2}\mathcal R|\varphi|^{2\sigma} v_{\mathrm p}+\frac12e^{is\Delta/2}\Delta v_{\mathrm p},
$$
Lemma \ref{lemma_R}, Proposition \ref{proposition_e} and \eqref{proposition_regularized_1_1} to observe 
\begin{align*}
\|\partial_s w_\ep\|_{H^{-1}}
&\lesssim 
\|\Delta w_\ep\|_{H^{-1}}+\|\Delta e_1\|_{H^{-1}}+\|\partial_s e_1\|_{H^{-1}}\\
&\quad+ s^{\sigma-2}\left(\||v_\ep|^{2\sigma}v_\ep-|v_{\mathrm p}|^{2\sigma}v_{\mathrm p}\|+\|\mathcal R|\varphi|^{2\sigma}v_{\mathrm p}\|_{H^{-1}}\right)\\
&\lesssim \|w_\ep\|_{H^1}+\|v_{\mathrm p}\|_{H^1}+ s^{\sigma-1}\||\varphi|^{2\sigma}v_{\mathrm p}\|_{H^1}\\
&\quad + s^{\sigma-2}\left\{\left(\|v_\ep\|_{L^\infty}^{2\sigma}+\|\varphi\|_{L^\infty}^{2\sigma}\right)\|w_\ep\|+s\||\varphi|^{2\sigma}v_{\mathrm p}\|_{H^1}\right\}\\
&\lesssim 1+s^{\sigma-1}+s^{3\sigma/2-2+\beta-3\delta/2}+s^{2\sigma-2}\\
&\lesssim s^{3\sigma/2-2}
\end{align*}
provided $\delta<2\beta/3$. 
Then \eqref{proposition_regularized_1_2} follows by integrating over the interval $[s',s]$. 

We now turn to the proof of \eqref{p_2_1_1}. It follows from \eqref{IE_1} and Proposition \ref{proposition_energy} that
\begin{align*}
\rho(s) \lesssim s^{-\beta+\delta}\left(Q[e_1](\ep)+Q[e_1](s)\right)+\int_\ep^s  r^{\sigma-2-\beta+\delta}\left(Q[iG(w_\ep,v_{\mathrm p})](r)+Q[ie_2](r)\right)dr. 
\end{align*}
To estimate the nonlinear term $Q[G(w_\ep,v_{\mathrm p})](s)$, we use Proposition \ref{proposition_nonlinear}  to observe
\begin{align}
& s^{\sigma-1-\beta+\delta}Q[i G(w_\ep,v_{\mathrm p})](s)\nonumber\\
&\lesssim 
 s^{\frac{\sigma^2}{2}+2\sigma-2+\frac{(\sigma+1)\delta}{2}-\beta+\delta}Q[w_\ep](s)^{2\sigma+1}
+ s^{\frac{5\sigma}{4}-1-\frac{\delta}{4}-\beta+\delta}Q[w_\ep](s)^{2}\nonumber\\
&\quad + s^{\frac{\sigma^2}{2}+\frac{(\min\{\nu,3/2\}+1)\sigma}{2}-1-\frac{(\sigma-\nu+1)\delta}{2}-\beta+\delta}Q[w_\ep](s)^{2\sigma}\nonumber\\
&\lesssim s^{\mu-C_1\delta}(1+\rho(s))^{2\sigma+1},
\label{p_2_1_2}
\end{align}
where $C_1=C_1(\sigma,\nu)>0$ is independent of $\delta$ and $\mu=\min\{\mu_1,\mu_2,\mu_3\}$ with
\begin{align*}
\mu_1(\sigma,\nu)&=\frac{\sigma^2}{2}+2\sigma-2+2\sigma \beta(\sigma,\nu),\\
\mu_2(\sigma,\nu)&=\frac{\sigma^2}{2}+\frac{(\min\{\nu,3/2\}+1)\sigma}{2}-1+(2\sigma-1)\beta(\sigma,\nu),\\
\mu_3(\sigma,\nu)&=\frac{5\sigma}{4}-1+\beta(\sigma,\nu).
\end{align*}
For the error terms $e_1$ and $e_2$, Proposition \ref{proposition_e} implies
\begin{align*}
s^{-\beta+\delta}\left(Q[e_1](\ep)+Q[e_1](s)\right)
&\lesssim
\begin{cases}
s^{\nu(\sigma-1/2)+\sigma/2-1-\beta+\delta}&\text{if}\quad 1\le \nu<2,\\
s^{{3\sigma}/{2}-1-\beta+\delta}&\text{if}\quad \nu\ge 2,
\end{cases}\\
s^{\sigma-1-\beta}Q[e_2](s)
&\lesssim 
\begin{cases}
s^{\nu(\sigma-1/2)+{3\sigma}/{2}-2-\beta}&\text{if}\quad 1\le \nu<2,\\
s^{\nu(\sigma-1/2)+\sigma/2-1-\beta}&\text{if}\quad 2\le \nu<\sigma/(\sigma-1/2).
\end{cases}
\end{align*}
Plugging \eqref{p_2_1_0} into these estimates, we obtain 
\begin{align}
s^{-\beta+\delta}\left(Q[e_1](\ep)+Q[e_1](s)\right)+\int_\ep^s r^{\sigma-2-\beta+\delta}Q[e_2](r)dr\lesssim 1
\label{p_r_p_3}
\end{align}
uniformly with respect to $0<\ep\le s\le s_0$. 
It follows from \eqref{p_2_1_2} and \eqref{p_r_p_3} that \eqref{p_2_1_1} holds with  $\beta_0=\mu/2$ as long as $\beta,\mu>0$ and $0<\delta<\mu/(2C_1)$.  

Now we shall show $\beta,\mu>0$ provided $2/\sqrt{7}<\sigma<1$ and $\nu_*(\sigma)<\nu<\sigma/(\sigma-1/2)$, where $\nu_*(\sigma)$ was given in Theorem \ref{theorem_1}. To check that $2/\sqrt{7}$ is in fact the possible infimum, we compute
\begin{align*}
\beta(\sigma,\sigma/(\sigma-1/2))&=3\sigma/2-1,\quad
\mu_1(\sigma,\sigma/(\sigma-1/2))=7\sigma^2/2-2,\\
\mu_2(\sigma,\sigma/(\sigma-1/2))&=7\sigma^2/2-9\sigma/4,\quad
\mu_3(\sigma,\sigma/(\sigma-1/2))=11\sigma/4-2.
\end{align*}
Thus $\beta(\sigma,\sigma/(\sigma-1/2))$ and $\mu(\sigma,\sigma/(\sigma-1/2))$ are positive if and only if 
$$
\sigma>\max\{2/3,2/\sqrt{7},9/14,8/11\}=2/\sqrt{7}. 
$$

It remains to determine $\nu_*(\sigma)$. To this end, we denote by $\nu_0(\sigma)$ (resp. $\nu_j(\sigma)$ for $j=1,2,3$) the solution to $\beta(\sigma,\nu)=0$  (resp. $\mu_j(\sigma,\nu,\beta)=0)$ for $j=1,2,3$). We also use the notation $\nu_j^\pm(\sigma)=\nu_j(\sigma)$ when $\beta=\beta^\pm$, respectively.
For short, we also set
$$
\nu_*^{\#}(\sigma)=\inf\{\nu\in [1,\sigma/(\sigma-1/2))\ |\ \nu\#2,\ \beta(\sigma,\nu)>0,\ \mu(\sigma,\nu)>0\},\quad \#\in \{\ge,\le\},
$$
where we use the convention $\inf \emptyset=\infty$. 
Then, we have $$\nu_*(\sigma)=\max\nu_j(\sigma)=\min\{\nu_*^{\ge}(\sigma),\nu_*^{\le}(\sigma)\}.$$
Now we determine $\nu_*^{\ge}(\sigma)$. To this end, we assume $\beta=\beta^+=\nu(\sigma-1/2)+\sigma/2-1$, or equivalently, restrict the range of $\nu$ onto $[2,\sigma/(\sigma-1/2))$, in which case $\nu_j=\nu^+_j$. Then we have
\begin{align*}
\mu_1(\sigma,\nu)&=3\sigma^2/2-2+2\sigma(\sigma-1/2)\nu,\\
\mu_2(\sigma,\nu)&=3\sigma^2/2-5\sigma/4+(2\sigma-1)(\sigma-1/2)\nu,\\
\mu_3(\sigma,\nu)&=7\sigma/4-2+(\sigma-1/2)\nu
\end{align*}
and hence
\begin{align*}
\nu_0^+(\sigma)=\frac{2-\sigma}{2\sigma-1},\quad 
\nu_1^+(\sigma)=\frac{4-3\sigma^2}{2\sigma(2\sigma-1)},\quad
\nu_2^+(\sigma)=\frac{5\sigma/2-3\sigma^2}{(2\sigma-1)^2},\quad
\nu_3^+(\sigma)=\frac{4-7\sigma/2}{2\sigma-1}.
\end{align*}
It is easy to check that, in the range $2/\sqrt7<\sigma<1$, 
\begin{align*}
\nu_1^+(\sigma)-\nu_0^+(\sigma)&=\frac{4-4\sigma-\sigma^2}{2\sigma(2\sigma-1)}\ge0\quad \text{if and only if}\quad 2/\sqrt7< \sigma\le 2\sqrt2-2,\\
\nu_0^+(\sigma)-\nu_2^+(\sigma)&=\frac{2\sigma^2+5\sigma-4}{2(2\sigma-1)^2}>0,\quad
\nu_1^+(\sigma)-\nu_3^+(\sigma)=\frac{4\sigma^2+4-8\sigma}{2\sigma(2\sigma-1)}>0.
\end{align*}
Hence
$$
\max_{0\le j\le3}\nu_j^+(\sigma)=
\begin{cases}
\nu_1^+(\sigma)&\text{if}\quad 2/\sqrt{7}<\sigma\le 2\sqrt{2}-2,\\
\nu_0^+(\sigma)&\text{if}\quad 2\sqrt{2}-2\le \sigma<1.
\end{cases}
$$
Moreover, we observe that $\nu_1^+(\sigma)\ge2$ if and only if $\sigma\le \frac{2+4\sqrt3}{11}$  and that $\nu_0^+(\sigma)\ge2$ if and only if $\sigma\le4/5$. Since $4/5<\frac{2+4\sqrt3}{11}<2\sqrt2-2$, we have
\begin{align}
\nu_*^{\ge}(\sigma)=\max\{\nu_j^+(\sigma),2\}
=\begin{cases}
\nu_1^+(\sigma)=\frac{4-3\sigma^2}{2\sigma(2\sigma-1)}&\text{if}\quad 2/\sqrt7<\sigma<\frac{2+4\sqrt3}{11},\\
2&\text{if}\quad \frac{2+4\sqrt3}{11}\le \sigma<1.
\end{cases}
\label{p_r_p_4}
\end{align}
Note that $\frac{4-3\sigma^2}{2\sigma(2\sigma-1)}|_{\sigma=\frac{2+4\sqrt3}{11}}=2$. To determine $\nu_*^{\le}(\sigma)$, assuming $\beta=\beta^-=\nu(\sigma-1/2)+3\sigma/2-2$ and hence $\nu_j=\nu_j^-$ by definition, we compute
\begin{align*}
\mu_1(\sigma,\nu)&=7\sigma^2/2-2\sigma-2+2\sigma(\sigma-1/2)\nu,\\
\mu_2(\sigma,\nu)&=7\sigma^2/2+\left(\min\{\nu/2,3/4\}-5\right)\sigma+1+(2\sigma-1)(\sigma-1/2)\nu,\\
\mu_3(\sigma,\nu)&=11\sigma/4-3+(\sigma-1/2)\nu
\end{align*}
and
\begin{align*}
\nu_0^-(\sigma)&=\frac{4-3\sigma}{2\sigma-1},\quad
\nu_1^-(\sigma)=\frac{4+4\sigma-7\sigma^2}{2\sigma(2\sigma-1)},\\
\nu_2^-(\sigma)&=\frac{-2+\left(10-\min\{\nu,3/2\}\right)\sigma-7\sigma^2}{(2\sigma-1)^2},\quad
\nu_3^-(\sigma)=\frac{6-11\sigma/2}{2\sigma-1}.
\end{align*}
Then, for $2/\sqrt7<\sigma<1$, 
\begin{align*}
\nu_1^-(\sigma)-\nu_0^-(\sigma)&=\frac{4-4\sigma-\sigma^2}{2\sigma(2\sigma-1)}\ge0\quad \text{if and only if}\quad 2/\sqrt7< \sigma\le 2\sqrt2-2,\\
\nu_0^-(\sigma)-\nu_2^-(\sigma)
&=\frac{\sigma^2+\left(\min\{\nu,3/2\}+1\right)\sigma-2}{(2\sigma-1)^2}
\ge \frac{\sigma^2+2\sigma-2}{(2\sigma-1)^2}>0,\\
\nu_1^-(\sigma)-\nu_3^-(\sigma)&=\frac{4-8\sigma+4\sigma^2}{2\sigma(2\sigma-1)}>0.
\end{align*}
Hence 
$$
\max_{0\le j\le3} \nu_j^-(\sigma)
=\begin{cases}
\nu_1^-(\sigma)&\text{if}\quad 2/\sqrt{7}<\sigma\le 2\sqrt{2}-2,\\
\nu_0^-(\sigma)&\text{if}\quad 2\sqrt{2}-2\le \sigma<1.
\end{cases}$$ 
Moreover, noting that $2\sqrt2-2<6/7$, we have $\nu_0^-(\sigma)\le 2$ if and only if $\sigma\ge 6/7$ and 
$\max\{\nu_1^-(\sigma),\nu_0^-(\sigma)\}>2$ for $\sigma\le 6/7$. Therefore, we have
\begin{align}
\nu^\le_*(\sigma)
=
\begin{cases}
\infty &\text{if}\quad 2/\sqrt7<\sigma<6/7,\\
\nu_0^-(\sigma)=
\frac{4-3\sigma}{2\sigma-1}&\text{if}\quad 6/7\le \sigma<1.\\
\end{cases}
\label{p_r_p_5}
\end{align}
Since $\frac{2+4\sqrt3}{11}<6/7$, by \eqref{p_r_p_4} and \eqref{p_r_p_5}, we conclude
$$
\nu_*(\sigma)=\min\{\nu_*^\ge(\sigma),\nu_*^\le(\sigma)\}=
\begin{cases}
\frac{4-3\sigma^2}{2\sigma(2\sigma-1)}&\text{if}\quad \frac{2}{\sqrt7}<\sigma\le\frac{2+4\sqrt3}{11},\\
2 &\text{if}\quad \frac{2+4\sqrt3}{11}\le\sigma\le\frac 67,\\
\frac{4-3\sigma}{2\sigma-1}&\text{if}\quad \frac 67\le \sigma<1.
\end{cases}
$$
This completes the proof. 
\qed

\subsection{Proof of Theorem \ref{theorem_v}: Global existence}
\label{section_proof_existence}
We are ready to prove the existence part of Theorem \ref{theorem_v}. 
Let $s_1=s_0/2$ and $0<\ep\le s_1$. We first construct $v$ satisfying \eqref{theorem_v_1}. Define $$\widetilde v_\ep(s,x):=v_\ep(\ep+s,x),\quad \widetilde v_{\mathrm p}(s,x):=v_{\mathrm p}(\ep+s,x),\quad \widetilde w_\ep(s,x):=w_\ep(\ep+s,x)=\widetilde v_\ep-\widetilde v_{\mathrm p},$$ where $w_\ep=v_\ep-v_{\mathrm p}$ and $v_\ep$ is the solution to \eqref{NLS_v_ep} obtained by Proposition \ref{proposition_regularized_1}. Then $\widetilde v_\ep$ satisfies\begin{align}\label{proof_existence_0}(i\partial_s+\frac12\Delta)\widetilde v_\ep=(\ep+s)^{\sigma-2}|\widetilde v_\ep|^{2\sigma}\widetilde v_\ep,\quad x\in \R,\quad 0<s\le s_1. \end{align}
Moreover, we have $\|\widetilde v_\ep\|=\|\varphi\|$ and, by \eqref{proposition_regularized_1_1} and \eqref{proposition_regularized_1_2}, 
\begin{align*}
&\|\nabla \widetilde w_\ep(s)\|
+(\ep+s)^{-\sigma/2+\delta/2}\|\widetilde w_\ep(s)\|
+(\ep+s)^{-1+\sigma/2}\||\varphi|^{\sigma-1}\Re[\overline{\widetilde v_{\mathrm p}(s)}\widetilde w_\ep(s)]\|\lesssim (\ep+s)^{\beta-\delta},\\
&\|\widetilde w_\ep(s)-\widetilde w_\ep(s')\|_{H^{-1}}\lesssim |s-s'|^{3\sigma/2-1},
\end{align*}
uniformly in $\ep\in (0,s_1]$ and $s\in [0,s_1]$. In particular, $\{\widetilde w_\ep\}_{\ep\in (0,s_1]}$ is bounded in $C([0,s_1],H^1(\R))$ and, for any $\alpha<1$, the map $[0,s_1]\ni s\mapsto \widetilde w_\ep\in H^\alpha(\R)$ is uniformly equicontinuous in $\ep\in (0,s_1]$. Then, one can use an abstract theorem \cite[Proposition 1.1.2]{Cazenave} to construct a function
\begin{align}
\label{proof_existence_1}
w\in C_{\mathrm w}([0,s_1],H^1(\R))\cap W^{1,\infty}([0,s_1],H^{-1}(\R))\cap C([0,s_1],H^\alpha(\R))
\end{align}
and a sequence $\ep_j\in (0,s_1]$ with $\ep_j\to 0$ as $j\to \infty$ such that $\widetilde  w_{\ep_j}(s)\rightharpoonup w(s)$ in $H^1(\R)$ as $j\to \infty$ for all $s\in [0,s_1]$, where $C_{\mathrm w}([0,s_1],H^1(\R))$ denotes the space of $H^1$-valued weakly continuous functions. 
By virtue of the weak lower semi-continuity of Hilbert norms,  this weak convergence and the above modified energy estimate for $\widetilde w_{\ep_j}$ yield
\begin{align*}
\|w(s)\|&\le \liminf_{j\to\infty}\| \widetilde w_{\ep_j}(s)\|\lesssim s^{\sigma/2+\beta-3\delta/2},\\
\|\nabla w(s)\|&\le \liminf_{j\to\infty}\|\nabla  \widetilde w_{\ep_j}(s)\|\lesssim s^{\beta-\delta},\\
\||\varphi|^{\sigma-1}\Re[\overline{ v_{\mathrm p}(s)}  w(s)]\|&\le \liminf_{j\to\infty}\||\varphi|^{\sigma-1}\Re[\overline{\widetilde v_{\mathrm p}(s)} \widetilde w_{\ep_j}(s)]\|\lesssim s^{1+\beta-\sigma/2-\delta},
\end{align*}
uniformly in $s\in [0,s_1]$. 
Define $v:=w+v_{\mathrm p}$. Since $$v_{\mathrm p}\in C((0,\infty),H^1(\R))\cap W_{\mathrm{loc}}^{1,\infty}((0,\infty),H^{-1}(\R)),$$ we thus have the following properties
\begin{itemize}
\item $\widetilde v_{\ep_j}(s)\rightharpoonup v(s)$ in $H^1(\R)$ as $j\to \infty$ for all $s\in (0,s_1]$;
\item $v \in C_{\mathrm w}((0,1],H^1(\R))\cap W_{\mathrm{loc}}^{1,\infty}((0,1],H^{-1}(\R))\cap C((0,1],H^\alpha(\R))$ for $\alpha<1$;
\item $v$ satisfies \eqref{theorem_v_1} for $s\in (0,s_1]$. 
\end{itemize}
Moreover, for any $0<s_1'<s_1$, there exists $C>0$ such that, for a.e. $s\in [s_1',s_1]$, 
$$
|\<\partial_s v(s),v(s)\>_{H^{-1},H^1}|\le \|\partial_s v\|_{H^{-1}}\|v(s)\|_{H^1}\le C. 
$$
Hence, $\|v(s)\|^2$ belongs to $W^{1,\infty}_{\mathrm{loc}}((0,s_1])$ and satisfies, for  a.e. $s\in (0,s_1]$,  
\begin{align}
\label{proof_existence_2}
\frac{d}{ds}\|v(s)\|^2=2\Re\<\partial_s v,v\>_{H^{-1},H^1}.
\end{align}

Next we show that $v$ satisfies NLS \eqref{NLS_v}. Suppose $s\in (0,s_1]$. We first observe by the compact embedding $H^1(I)\subset L^2(I)$ that $\widetilde v_{\ep_j}(s)\to v(s)$ in $L^2(I)$ as $j\to \infty$ for any compact interval $I\Subset \R$. Moreover, by virtue of \eqref{theorem_v_1}, $v$ satisfies
$$
\|v(s)\|_{L^\infty(\R)}\le \|w(s)\|_{H^1(\R)}+\|v_{\mathrm p}(s)\|_{L^\infty(\R)}\lesssim s^{\beta-\delta}+\|\varphi\|_{L^\infty}\lesssim1
$$
uniformly in $0< s\le s_1$. The same uniform bound also holds for $\widetilde v_{\ep_j}$ by \eqref{proposition_regularized_1_1}. Hence 
\begin{align}
&\||\widetilde v_{\ep_j}(s)|^{2\sigma}\widetilde v_{\ep_j}(s)-|v(s)|^{2\sigma}v(s)\|_{L^2(I)}\nonumber\\ 
&\lesssim \left(\|\widetilde v_{\ep_j}(s)\|_{L^\infty(\R)}^{2\sigma}+\|v(s)\|_{L^\infty(\R)}^{2\sigma}\right)\|\widetilde v_{\ep_j}(s)-v(s)\|_{L^2(I)}\nonumber\\
&\lesssim \|\widetilde v_{\ep_j}(s)-v(s)\|_{L^2(I)},
\nonumber
\end{align}
where the implicit constants are independent of $j\in \N$, $s\in (0,s_1]$ and $I\subset \R$. Thus, $$|\widetilde v_{\ep_j}(s)|^{2\sigma}\widetilde v_{\ep_j}(s)\to  |v(s)|^{2\sigma}v(s)\ \text{in}\ L^2(I)$$ for all $I\Subset \R$. Since $I\Subset \R$ can be taken arbitrarily, we obtain by the density argument that $$|\widetilde v_{\ep_j}(s)|^{2\sigma}\widetilde v_{\ep_j}(s)\rightharpoonup |v(s)|^{2\sigma}v(s)\ \text{in}\ L^2(\R).$$ Since $\{|\widetilde v_{\ep_j}(s)|^{2\sigma}\widetilde v_{\ep_j}(s)\}_j$ is bounded in $H^1(\R)$, by passing to a subsequence if necessary, we  have
$$|\widetilde v_{\ep_j}(s)|^{2\sigma}\widetilde v_{\ep_j}(s)\rightharpoonup |v(s)|^{2\sigma}v(s)\ \text{in}\ H^1(\R).$$
Now we can take the limit $j\to \infty$ in \eqref{proof_existence_0} with $\ep=\ep_j$, showing that $v$ satisfies \eqref{NLS_v} in $H^{-1}(\R)$ for a.e. $s\in (0,s_1]$. In particular, \eqref{proof_existence_2} implies
$
\frac{d}{ds}\|v(s)\|^2=0
$ for a.e. $s\in (0,s_1]$. 
Since $\|v(s)\|$ is locally Lipschitz continuous on $(0,s_1]$ by the property $\|v(\cdot)\|^2\in W^{1,\infty}_{\mathrm{loc}}((0,s_1])$, $\|v(s)\|$ is constant on $[s_1',s_1]$  for any $0<s_1'<s_1$ and hence on $(0,s_1]$. In particular, the mass conservation 
$\|v(s)\|=\|\varphi\|$ holds for all $s\in (0,s_1]$ since
$$
\big|\|v(s)\|-\|\varphi\|\big|=\big|\|v(s)\|-\|v_{\mathrm p}(s)\|\big|\le \|w(s)\|\to 0\quad\text{as}\quad s\to 0.
$$
This, combined with the mass conservation $\|\varphi\|=\|\widetilde v_{\ep_j}(s)\|$, implies $\widetilde v_{\ep_j}(s)\to v(s)$ in $L^2(\R)$ for all $s\in (0,s_1]$. Since $\{\widetilde v_{\ep_j}(s)\}_j$ is bounded in $H^1(\R)$, we also obtain by interpolation that $\widetilde v_{\ep_j}(s)\to v(s)$  and $|\widetilde v_{\ep_j}(s)|^{2\sigma}\widetilde v_{\ep_j}(s)\to |v(s)|^{2\sigma}v(s)$ in $H^\alpha(\R)$ for any $\alpha<1$ and $s\in (0,s_1]$. This ensures that $v$ satisfies \eqref{NLS_v} in $H^{-2+\alpha}(\R)$ for all $0<s\le s_1$ and Duhamel's formula 
\begin{align}
\label{Duhamel}
v(s)=e^{i(s-\tau)\Delta/2}v(\tau)-i\lambda \int_{\tau}^s r^{\sigma-2}e^{i(s-r)\Delta/2}|v(r)|^{2\sigma}v(r)dr
\end{align}
in $H^\alpha(\R)$ for all $0<s,\tau\le s_1$. 

It remains to prove $v\in C((0,s_1],H^1(\R)$, which implies \eqref{Duhamel} in $H^1(\R)$ for all $0<s,\tau\le s_1$ and thus $v$ is a strong $H^1$ solution to \eqref{NLS_v} satisfying \eqref{theorem_v_1}. 
Let us set 
$$
E(s)=\frac12\|\nabla v(s)\|^2+\frac{\lambda s^{\sigma-2}}{\sigma+1}\int_\R|v(s)|^{2\sigma+2}dx.
$$
Then the following pseudo-conformal conservation law holds: 
\begin{align}
\label{law}
E(s)-E(\tau)=\frac{\lambda (\sigma-2)}{\sigma+1}\int_{\tau}^s r^{\sigma-3}\int_\R|v(r)|^{2\sigma+2}dxdr;
\end{align}
see Appendix \ref{appendix_A} for the proof. 
In particular, for any $s,\tau\in (0,s_1]$, 
\begin{align*}
\frac12\|\nabla v(s)\|^2-\frac12\|\nabla v(\tau)\|^2&=\frac{\lambda }{\sigma+1}\left(-s^{\sigma-2}\int_\R|v(s)|^{2\sigma+2}dx+ \tau^{\sigma-2}\int_\R|v(\tau)|^{2\sigma+2}dx\right)\\
&\quad +\frac{\lambda (\sigma-2)}{\sigma+1}\int_{\tau}^s r^{\sigma-3}\int_\R|v(r)|^{2\sigma+2}dxdr,
\end{align*}
where the RHS converges to $0$ as $s\to \tau$ since $v\in C((0,s_1],H^\alpha(\R))\subset C((0,s_1],L^{2\sigma+2}(\R))$ for $1/2<\alpha<1$. This shows that $(0,s_1]\ni s\mapsto \|v(s)\|_{H^1}$ is continuous, thereby implying $v\in C((0,s_1],H^1(\R))$ since $v\in C_{\mathrm w}((0,s_1],H^1(\R))$.  \qed


\section{Proof of Theorem \ref{theorem_v}: Uniqueness}
\label{section_uniqueness}
Here we prove the uniqueness part of Theorem \ref{theorem_v}, thereby completing the proof of Theorem \ref{theorem_v}. Let $\varphi\in H^\nu(\R)$ and $v_1,v_2\in C((0,s_1],H^1(\R))$ be two solutions to \eqref{NLS_v} satisfying \eqref{theorem_v_1} for any $\delta>0$. Let $W:=v_1-v_2$.  
By the same calculation as that in Section \ref{subsection_energy}, $W$ satisfies
\begin{align*}
(i\partial_s+\frac12\Delta)W
&=\lambda s^{\sigma-2}\left\{(\sigma+1)|v_1|^{2\sigma}W+\sigma |v_1|^{2\sigma-2}v_1^2\overline W+G(W,v_1)\right\}\\
&=\lambda s^{\sigma-2}\left\{(\sigma+1)|\varphi|^{2\sigma}W+\sigma |\varphi|^{2\sigma-2}v_{\mathrm p}^2\overline W+G(W,v_1)+\widetilde{G}(W)\right\},
\end{align*}
where $G(W,v_1)$,  defined by \eqref{G_ep}, is of the form
$$
G(W,v_1)=\int_0^1\left((\sigma+1)W(|W_{[\theta]}|^{2\sigma}-|v_1|^{2\sigma})
+\sigma \overline{W} (|W_{[\theta]}|^{2\sigma-2}W_{[\theta]}^2-|v_1|^{2\sigma-2}v_1^2)\right)d\theta
$$
with $W_{[\theta]}=v_1+\theta W$, and
\begin{align*}
\widetilde{G}(W)
:=(\sigma+1)W\left(|v_1|^{2\sigma}-|\varphi|^{2\sigma}\right)+\sigma \overline W\left(|v_1|^{2\sigma-2}v_1^2-|\varphi|^{2\sigma-2}v_{\mathrm p}^2\right).
\end{align*}
Hence  $\vv W=(W,\overline W)^{\mathrm T}$ satisfies, for any $0<\ep\le s\le1$, 
\begin{align*}
\vv W(s)=\mathcal U(s,\ep)\vv W(\ep)-\lambda \int_{\ep}^s r^{\sigma-2}\mathcal U_0(s,r)\left(\vv{(iG)}(W,v_1)+\vv{(i\widetilde{G})}(W)\right)(r)dr.
\end{align*}
Then Proposition \ref{proposition_energy} implies
\begin{align}
Q[W](s)
\lesssim 
Q[W](\ep)+
\int_{\ep}^s r^{\sigma-2}\left(Q[i G(W,v_1)](r)+Q[i \widetilde{G}(W)](r)\right)dr,
\label{proof_uniqueness_1}
\end{align}
where, by \eqref{theorem_v_1}, $Q[W](\ep)$ satisfies
\begin{align}
Q[W](\ep)\le Q[v_1-v_{\mathrm p}](\ep)+Q[v_2-v_{\mathrm p}](\ep) \lesssim \ep^{\beta-\delta}. 
\label{proposition_uniqueness_2}
\end{align}
Moreover, we have the following estimate for the nonlinear term: 
\begin{proposition}
\label{proposition_uniqueness}
Let $\delta>0$ be sufficiently small. Then there exists $\beta_1>0$ such that
\begin{align}
Q[i G(W,v_1)](s)+Q[i \widetilde{G}(W)](s)
&\lesssim s^{\beta_1+1-\sigma} Q[W](s)
\label{proposition_uniqueness_1}
\end{align}
uniformly in $0<s\le s_1$. 
\end{proposition}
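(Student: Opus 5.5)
We follow the proof of Proposition~\ref{proposition_nonlinear}, now using that $W$ is a difference of two solutions, so every term is \emph{linear} in $W$ with coefficients that are small in $s$. Set $w_j:=v_j-v_{\mathrm p}$ for $j=1,2$. By \eqref{theorem_v_1}, $Q[w_j](s)\lesssim s^{\beta-\delta}$. Together with the bounds listed in the proof of Proposition~\ref{proposition_nonlinear}, this gives
\[
\|w_j\|_{L^\infty}\lesssim s^{\sigma/4+\beta-5\delta/4},\qquad
\|w_j\|\lesssim s^{\sigma/2+\beta-3\delta/2},\qquad
\|\nabla(e^{-i\gamma\Phi}w_j)\|\lesssim s^{\beta-\delta}.
\]
By Lemma~\ref{lemma_nonlinear} we may replace $Q$ by $\widetilde Q$ throughout. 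Thus it suffices to bound the three pieces
\[
s^{-\sigma/2+\delta/2}\|F\|,\qquad
s^{\sigma/2-1}\bigl\||\varphi|^{\sigma-1}\Im[\overline{v_{\mathrm p}}F]\bigr\|,\qquad
\|\nabla(e^{-i\gamma\Phi}F)\|
\]
for $F=G(W,v_1)$ and for $F=\widetilde G(W)$. Each should be bounded by $s^{\beta_1+1-\sigma}\widetilde Q[W](s)$.

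\textbf{The term $\widetilde G(W)$.} Note $v_1=v_{\mathrm p}+w_1$. By \eqref{proposition_nonlinear_proof_1},
\[
|\widetilde G(W)|\lesssim |W|\bigl(|w_1|^{2\sigma}+|w_1|\bigr).
\]
This gives
\[
s^{-\sigma/2+\delta/2}\|\widetilde G\|\lesssim s^{-\sigma/2}\|W\|\,\|w_1\|_{L^\infty}^{\min(1,2\sigma)}\lesssim s^{\sigma/4+\beta-C\delta}\,\widetilde Q[W].
\]
This is far better than $s^{1-\sigma}$ when $\beta$ is as in Theorem~\ref{theorem_1}. For the $\Im$ piece we repeat the Taylor-expansion cancellation that led to \eqref{proposition_nonlinear_proof_4_0}, now expanding in $w_1$ around $v_{\mathrm p}$ with $W$ as a fixed factor. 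The first-order part of $\Im[\overline{v_{\mathrm p}}\widetilde G]$ is a combination of
\[
|\varphi|^{2\sigma-2}\Re[\overline{v_{\mathrm p}}W]\,\Im[\overline{v_{\mathrm p}}w_1]
\quad\text{and}\quad
|\varphi|^{2\sigma-2}\Re[\overline{v_{\mathrm p}}w_1]\,\Im[\overline{v_{\mathrm p}}W].
\]
We estimate the first with $\||\varphi|^{\sigma-1}\Re[\overline{v_{\mathrm p}}W]\|\lesssim s^{1-\sigma/2}\widetilde Q[W]$ and $\|w_1\|_{L^\infty}$. We estimate the second with $\||\varphi|^{\sigma-1}\Re[\overline{v_{\mathrm p}}w_1]\|\lesssim s^{1-\sigma/2+\beta-\delta}$ and $\|W\|_{L^\infty}\lesssim s^{\sigma/4-\delta/4}\widetilde Q[W]$. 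The remainder is $O(|W||w_1|^{2\sigma})$.

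For the gradient piece we differentiate $e^{-i\gamma\Phi}\widetilde G$ as in \eqref{proposition_nonlinear_proof_6}. This produces three types of term: $|\nabla\widetilde W|(|w_1|+|w_1|^{2\sigma})$; $|W||\nabla\widetilde w_1|(|w_1|^{2\sigma-1}+1)$, using $2\sigma-1>0$; and $|W||\nabla\varphi||w_1|^{2\sigma-1}$. We use $L^\infty$ bounds on $w_1$, the bound $\|W\|_{L^\infty}\lesssim s^{\sigma/4-\delta/4}\widetilde Q[W]$, and H\"older/Gagliardo--Nirenberg as in \eqref{proposition_nonlinear_proof_7}. Every resulting power exceeds $1-\sigma$ by a positive amount, since $\beta>0$ and $\sigma>2/\sqrt7$.

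\textbf{The term $G(W,v_1)$.} This is handled identically to Proposition~\ref{proposition_nonlinear}, with $v_{\mathrm p}$ replaced by $v_1$. However, we keep one factor of $W$ and estimate the remaining factor $|W|\le|w_1|+|w_2|$ by the known decay of the $w_j$. For example, $|G|\lesssim|W|(|W|+|W|^{2\sigma})$ and $\|W\|_{L^\infty}\lesssim s^{\sigma/4+\beta-5\delta/4}$. The cancellation \eqref{proposition_nonlinear_proof_4_0} holds with $\overline{v_1}$ in place of $\overline{v_{\mathrm p}}$. We then convert $\Re[\overline{v_1}W]$ into $\Re[\overline{v_{\mathrm p}}W]+O(|w_1||W|)$ before inserting $|\varphi|^{\sigma-1}$.

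\textbf{Main obstacle.} The hardest part is the gradient terms containing $|\nabla\varphi||w|^{2\sigma-1}$ or $|\varphi|^{\sigma-1}$ weights. There the nonsmoothness of $|z|^{2\sigma}$ at $0$ and the $\gamma$-growth from $\nabla v_1$ interact. We handle these through the $e^{-i\gamma\Phi}$ conjugation, exactly as in Proposition~\ref{proposition_nonlinear}. The power bookkeeping then reduces to checking that $\mu_j$-type exponents minus $(1-\sigma)$ stay positive. This holds by the same inequalities that gave $\mu>0$ in Section~\ref{subsection_existence_3}, and we take $\beta_1$ to be the minimum gain, minus $C\delta$.
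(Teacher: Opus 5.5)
Your overall scheme matches the paper: you replace $Q$ by $\widetilde Q$, put the a priori decay of $W=w_1-w_2$ into the coefficients, and read off $\beta_1=\mu-C\delta$. Your treatment of $\widetilde G(W)$ and of the $L^2$ and gradient pieces is essentially right.

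The gap is in the weighted piece $s^{\sigma/2-1}\||\varphi|^{\sigma-1}\Im[\overline{v_{\mathrm p}}G(W,v_1)]\|$. Since $\sigma<1$ and $\varphi\in H^\nu$ vanishes at infinity (and possibly elsewhere), the weight $|\varphi|^{\sigma-1}$ is unbounded. The weighted quantities you control are $|\varphi|^{\sigma-1}\Re[\overline{v_{\mathrm p}}W]$ (via $\widetilde Q[W]$) and $|\varphi|^{\sigma-1}\Re[\overline{v_{\mathrm p}}w_1]$ (via \eqref{theorem_v_1}). Every other term placed under the weight must carry a factor $|\varphi|^{1-\sigma}$.

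Running the cancellation with the multiplier $\overline{v_1}$ destroys this. First, $\Im[\overline{v_{\mathrm p}}G]-\Im[\overline{v_1}G]=-\Im[\overline{w_1}G]$ produces $|\varphi|^{\sigma-1}|w_1|(|W|^2+|W|^{2\sigma+1})$. Second, your conversion term $|v_1|^{2\sigma-1}|\Re[\overline{w_1}W]||W|$ and the remainder $|v_1||W|^{2\sigma+1}$ behave like $|w_1|^{2\sigma}|W|^2$ and $|w_1||W|^{2\sigma+1}$ wherever $\varphi$ is small, since there $v_1\approx w_1$. None of these terms contains a power of $|\varphi|$, so none can be bounded by $\widetilde Q[W]$ and the decay of $w_1$. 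The $e^{-i\gamma\Phi}$ conjugation you invoke does not help: it only tames the $\gamma$-growth in the gradient terms.

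The fix, which is what the paper does, is to keep $\overline{v_{\mathrm p}}$ as the multiplier throughout:
\begin{itemize}
\item Taylor-expand $|W_{[\theta]}|^{2\sigma}$ and $|W_{[\theta]}|^{2\sigma-2}W_{[\theta]}^2$ around $v_1$ to second order.
\item Replace the first-order coefficients $|v_1|^{2\sigma-2}\overline{v_1}$, $|v_1|^{2\sigma-2}v_1$ and $|v_1|^{2\sigma-4}v_1^3$ by their $v_{\mathrm p}$-counterparts using \eqref{proposition_nonlinear_proof_2}.
\item The $v_{\mathrm p}$-part then gives exactly the cancellation $4\sigma\theta|\varphi|^{2\sigma-2}\Re[\overline{v_{\mathrm p}}W]\Im[\overline{v_{\mathrm p}}W]$, and every error term keeps the factor $|\overline{v_{\mathrm p}}|=|\varphi|$.
\end{itemize}
This yields
\[
|\Im[\overline{v_{\mathrm p}}G(W,v_1)]|\lesssim |\varphi|^{2\sigma-1}|\Re[\overline{v_{\mathrm p}}W]||W|+|\varphi||w_1|^{2\sigma-1}|W|^2+|\varphi||W|^{2\sigma+1}.
\]
Since $|\varphi|^{\sigma-1}|\varphi|=|\varphi|^{\sigma}$ is bounded, the middle term is controlled by $s^{\sigma/2-1}\|W\|\|W\|_{L^\infty}\|w_1\|_{L^\infty}^{2\sigma-1}\lesssim s^{\sigma^2/2+\sigma-1+2\sigma\beta-}\widetilde Q[W]$. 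This is the $\mu_1$ gain, and the remaining terms go as you describe.

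The same $|\varphi|$ factor is what makes your remainder for $\widetilde G$ harmless, so it should be written $O(|\varphi||W||w_1|^{2\sigma})$ rather than $O(|W||w_1|^{2\sigma})$. Finally, positivity of all the gains requires $\mu>0$, that is $\nu>\nu_*(\sigma)$; $\beta>0$ and $\sigma>2/\sqrt7$ alone are not enough.
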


Before proving this proposition, we shall finish the proof of the uniqueness. By \eqref{proposition_uniqueness_2}, \eqref{proposition_uniqueness_1} and the dominated convergence theorem, we can take the limit $\ep\searrow 0$ in \eqref{proof_uniqueness_1}, yielding
$$
Q[W](s)
\lesssim \int_{0}^s r^{\beta_1-1}Q[W](r)dr,\quad 0<s\le s_1.
$$
This shows $Q[W](s)=0$ for all $0<s\le s_1$ by Gronwall's inequality, and hence $v_1\equiv v_2$. \qed

\begin{proof}[Proof of Proposition \ref{proposition_uniqueness}]
By Lemma \ref{lemma_nonlinear}, $Q[f](s)\sim\widetilde Q[f](s)$ uniformly in $0<s\le s_1$ with $\widetilde Q[f]$ defined by \eqref{widetilde_Q}. Then, by the same argument as in the proof of \eqref{p_2_1_1}, it is sufficient to prove
\begin{align}
&\widetilde Q[i G(W,v_1)](s) +\widetilde Q[i \widetilde{G}(W)](s)\nonumber\\
&\lesssim (s^{\frac{\sigma^2}{2}+\sigma-1+2\sigma\beta-}+s^{\frac{\sigma}{4}+\beta-}+s^{\frac{\sigma^2}{2}+\frac{(\nu_1-1)\sigma}{2}+(2\sigma-1)\beta-})\widetilde Q[W](s),
\label{p_u_p_1}
\end{align}
where $\nu_1=\min\{\nu, 3/2\}$ and we suppress the dependence with respect to $\delta$, simply writing $A\lesssim s^{b-}B$ if $A\lesssim s^{b-C\delta}B$ with some $b,C>0$. 
The proof of \eqref{p_u_p_1} is almost identical to that of Proposition \ref{proposition_nonlinear}. Indeed, by the definition \eqref{widetilde_Q} of $\widetilde Q$ and \eqref{theorem_v_1}, we have
\begin{align}
\|W\|&\lesssim s^{\frac\sigma2-}\widetilde Q[W]\lesssim s^{\frac\sigma2+\beta-},\quad \|\nabla(e^{-i\gamma \Phi}W)\|\lesssim \widetilde Q[W]\lesssim s^{\beta-},\nonumber\\
\|W\|_{L^\infty}&\lesssim s^{\frac\sigma4-}\widetilde Q[W]\lesssim s^{\frac\sigma4+\beta-},\quad \||\varphi|^{\sigma-1}\Re[\overline{v_{\mathrm p}}W]\|\lesssim s^{1-\frac\sigma2}\widetilde Q[W]\lesssim s^{1-\frac\sigma2+\beta-}. \label{p_u_p_1_0}
\end{align}
These estimates, \eqref{proposition_nonlinear_proof_1} and \eqref{theorem_v_1} imply
\begin{align}
s^{-\frac{\sigma}{2}+\frac\delta2}\|G(W,v_1)\|
\lesssim s^{-\frac{\sigma}{2}+\frac\delta2}\|W\|\left(\|W\|^{2\sigma}_{L^\infty}+\|W\|_{L^\infty}\right)
\lesssim (s^{\frac{\sigma^2}{2}+2\sigma\beta-}+s^{\frac\sigma4+\beta-})\widetilde Q[W](s).
\label{p_u_p_2}
\end{align}
To deal with $\Im[\overline{v_{\mathrm p}}G(W,v_1)]$, we repeat the same argument used to show \eqref{proposition_nonlinear_proof_4_0}. By expanding $|W_{[\theta]}|^{2\sigma}$ and $|W_{[\theta]}|^{2\sigma-2}W_{[\theta]}^2$ around $v_1$, the integrand of $\Im[\overline{v_{\mathrm p}}G(W,v_1)]$ can be written as
\begin{align*}
&(\sigma+1)\Im\left[\overline{v_{\mathrm p}}W(|W_{[\theta]}|^{2\sigma}-|v_1|^{2\sigma})\right]
+\sigma \Im\left[\overline{v_{\mathrm p}}\overline{W} (|W_{[\theta]}|^{2\sigma-2}W_{[\theta]}^2-|v_1|^{2\sigma-2}v_1^2)\right]\\
&=(\sigma+1) \Im\left[\overline{v_{\mathrm p}}W\left\{\sigma |v_1|^{2\sigma-2}\overline{v_1}\theta W+\sigma|v_1|^{2\sigma-2}v_1\theta \overline{W}\right\}\right]\\
&\quad +\sigma \Im\left[\overline{v_{\mathrm p}}\overline{W} \left\{(\sigma+1)|v_1|^{2\sigma-2}v_1\theta W+(\sigma-1)|v_1|^{2\sigma-4}v_1^3\theta \overline{W}\right\}\right]
+\Im\left[\overline{v_{\mathrm p}}G_{3,\theta}\right],
\end{align*}
where, setting $W_{[\theta,\rho]}:=v_1+\rho(W_{[\theta]}-v_1)=v_1+\rho\theta W$, 
\begin{align*}
G_{3,\theta}
&=\theta (\sigma+1)\sigma W^2 \int_0^1 \left(|W_{[\theta,\rho]}|^{2\sigma-2}\overline{W_{[\theta,\rho]}}-|v_1|^{2\sigma-2}\overline{v_1}\right)d\rho\\
&\quad+2\theta (\sigma+1)\sigma |W|^2 \int_0^1 \left(|W_{[\theta,\rho]}|^{2\sigma-2}W_{[\theta,\rho]}-|v_1|^{2\sigma-2}v_1\right)d\rho
\\
&\quad+\theta \sigma(\sigma-1) \overline{W}^2 \int_0^1 \left(|W_{[\theta,\rho]}|^{2\sigma-4}W_{[\theta,\rho]}^3-|v_1|^{2\sigma-4}v_1^3\right)d\rho. 
\end{align*}
The first two terms are further rewritten as \begin{align*}
&\Im\left[\overline{v_{\mathrm p}}W\left\{\sigma |v_1|^{2\sigma-2}\overline{v_1}\theta W+\sigma|v_1|^{2\sigma-2}v_1\theta \overline{W}\right\}\right]\\
&=\Im\left[\overline{v_{\mathrm p}}W\left\{\sigma |v_{\mathrm p}|^{2\sigma-2}\overline{v_{\mathrm p}}\theta W+\sigma|v_{\mathrm p}|^{2\sigma-2}v_{\mathrm p}\theta \overline{W}\right\}\right]\\
&\quad +\Im\left[\overline{v_{\mathrm p}}W\left\{\sigma \left(|v_1|^{2\sigma-2}\overline{v_1}-|v_{\mathrm p}|^{2\sigma-2}\overline{v_{\mathrm p}}\right)\theta W
+\sigma\left(|v_1|^{2\sigma-2}v_1-|v_{\mathrm p}|^{2\sigma-2}v_{\mathrm p}\right)\theta \overline{W}\right\}\right]\\
&=2\sigma \theta |\varphi|^{2\sigma-2}\Re[\overline{v_{\mathrm p}}W]\Im[\overline{v_{\mathrm p}}W]+O(|\varphi||v_1-v_{\mathrm p}|^{2\sigma-1}|W|^2),\\
&\Im\left[\overline{v_{\mathrm p}}\overline{W} \left\{(\sigma+1)|v_1|^{2\sigma-2}v_1\theta W+(\sigma-1)|v_1|^{2\sigma-4}v_1^3\theta \overline{W}\right\}\right]\\
&=\Im\left[\overline{v_{\mathrm p}}\overline{W} \left\{(\sigma+1)|v_{\mathrm p}|^{2\sigma-2}v_{\mathrm p}\theta W+(\sigma-1)|v_{\mathrm p}|^{2\sigma-4}v_{\mathrm p}^3\theta \overline{W}\right\}\right]\\
&\quad +\Im\left[\overline{v_{\mathrm p}}\overline{W} \left\{(\sigma+1)\left(|v_1|^{2\sigma-2}v_1-|v_{\mathrm p}|^{2\sigma-2}v_{\mathrm p}\right)\theta W
+(\sigma-1)\left(|v_1|^{2\sigma-4}v_1^3-|v_{\mathrm p}|^{2\sigma-4}v_{\mathrm p}^3\right)\theta \overline{W}\right\}\right]\\
&=-2(\sigma-1)\theta |\varphi|^{2\sigma-2}\Re[\overline{v_{\mathrm p}}W]\Im[\overline{v_{\mathrm p}}W]+O(|\varphi||v_1-v_{\mathrm p}|^{2\sigma-1}|W|^2).
\end{align*}
Besides, by \eqref{proposition_nonlinear_proof_2}, $|\Im\left[\overline{v_{\mathrm p}}G_{3,\theta}\right]|$ is dominated by $|\varphi||W|^{2\sigma+1}$.
Therefore, we have\begin{align*}
\Im[\overline{v_{\mathrm p}}G(W,v_1)]
\lesssim |\varphi|^{2\sigma-1}\Re[\overline{v_{\mathrm p}}W]|W|+|\varphi| |v_1-v_{\mathrm p}|^{2\sigma-1}|W|^2+|\varphi||W|^{2\sigma+1}.
\end{align*}
Hence \eqref{theorem_v_1} and \eqref{p_u_p_1_0} imply
\begin{align}
&s^{\frac\sigma2-1}\||\varphi|^{\sigma-1}\Im[\overline{v_{\mathrm p}}G(W,v_1)]\|\nonumber\\
&\lesssim s^{\frac\sigma2-1}\|\varphi\|_{L^\infty}^{2\sigma-1}\||\varphi|^{\sigma-1}\Re[\overline{v_{\mathrm p}}W]\|\|W\|_{L^\infty}\nonumber\\
&\quad +s^{\frac\sigma2-1}\|\varphi\|_{L^\infty}^\sigma \left(\|v_1-v_{\mathrm p}\|_{L^\infty}^{2\sigma-1}\|W\|_{L^\infty}+\|W\|_{L^\infty}^{2\sigma}\right)\|W\|\nonumber\\
&\lesssim s^{\frac\sigma4-}\widetilde Q[W](s)^2+s^{\frac\sigma2-1+\frac{\sigma^2}{2}+\frac\sigma2-}(\widetilde Q[W](s)^2+\widetilde Q[W](s)^{2\sigma+1})\nonumber\\
&\lesssim s^{\frac\sigma4+\beta-}\widetilde Q[W](s)+s^{\frac{\sigma^2}{2}+\sigma-1+2\sigma\beta-}\widetilde Q[W](s).
\label{p_u_p_3}
\end{align}
To deal with $\|\nabla (e^{-i\gamma \Phi} G(W,v_1))\|$, we set $\widetilde v_1=e^{-i\gamma \Phi}v_1$ and $\widetilde W=e^{-i\gamma \Phi}W$ so that
$$
e^{-i\gamma \Phi} G(W,v_1)=\int_0^1 \left \{(\sigma+1)\widetilde W(|\widetilde W_{[\theta]}|^{2\sigma}-|\widetilde v_1|^{2\sigma})+\sigma \overline{\widetilde W}(|\widetilde W_{[\theta]}|^{2\sigma-2}\widetilde W_{[\theta]}^2-|\widetilde v_1|^{2\sigma-2}\widetilde v_1^2)\right\}d\theta,
$$
where $\widetilde W_{[\theta]}=\widetilde v_1+\theta\widetilde W$. 
By \eqref{proposition_nonlinear_proof_6}, $\nabla (\overline{\widetilde W}(|\widetilde W_{[\theta]}|^{2\sigma-2}\widetilde W_{[\theta]}^2-|\widetilde v_1|^{2\sigma-2}\widetilde v_1^2))$ is a linear combination of
\begin{align*}
&(\overline{\nabla  \widetilde W})(|\widetilde W_{[\theta]}|^{2\sigma-2}\widetilde W_{[\theta]}^{2}-|\widetilde v_1|^{2\sigma-2}\widetilde v_1^{2}),\\
&\overline{\widetilde W}(|\widetilde W_{[\theta]}|^{2\sigma-2}W_{[\theta]}-|\widetilde v_1|^{2\sigma-2}\widetilde v_1)\nabla \widetilde v_1,\quad
\overline{\widetilde W}(|\widetilde W_{[\theta]}|^{2\sigma-4}\widetilde W_{[\theta]}^3-|\widetilde v_1|^{2\sigma-4}\widetilde v_1^3)\overline{\nabla \widetilde v_1},\\
&\overline{\widetilde W}(|\widetilde W_{[\theta]}|^{2\sigma-2}\widetilde W_{[\theta]}-|\widetilde v_1|^{2\sigma-2}\widetilde v_1)\nabla (\widetilde W_{[\theta]}-\widetilde v_1),\quad
\overline{\widetilde W}(|\widetilde W_{[\theta]}|^{2\sigma-4}\widetilde W_{[\theta]}^3-|\widetilde v_1|^{2\sigma-4}\widetilde v_1^3)\overline{\nabla (\widetilde W_{[\theta]}-\widetilde v_1)},\\
&\overline{\widetilde W}|\widetilde v_1|^{2\sigma-2}\widetilde v_1\nabla (\widetilde W_{[\theta]}-\widetilde v_1),\quad
\overline{\widetilde W}|\widetilde v_1|^{2\sigma-4}\widetilde v_1^3\overline{\nabla (\widetilde W_{[\theta]}-\widetilde v_1)}.
\end{align*}
With the estimates \eqref{proposition_nonlinear_proof_1}, \eqref{proposition_nonlinear_proof_2}, \eqref{proposition_nonlinear_proof_7}, \eqref{p_u_p_1_0} and
$
|\nabla \widetilde v_1|\le |\nabla(\widetilde v_1-\varphi)|+|\nabla \varphi|
$
at hand, we have
\begin{align*}
&\|\nabla \{\overline{\widetilde W} (|\widetilde W_{[\theta]}|^{2\sigma-2}\widetilde W_{[\theta]}^2-|\widetilde v_1|^{2\sigma-2}\widetilde v_1^2)\}\|\\
&\lesssim \||\nabla \widetilde W|(|\widetilde W|^{2\sigma}+|v_1|^{2\sigma-1}|\widetilde W|)\|+\||\widetilde W|^{2\sigma}\nabla(\widetilde v_1-\varphi)\|+\||\widetilde W|^{2\sigma}\nabla\varphi\|\\
&\quad +\||\widetilde W|^{2\sigma}|\nabla \widetilde W|\|+\||\widetilde W||\widetilde v_1|^{2\sigma-1}\nabla \widetilde W\|\\
&\lesssim \|\nabla \widetilde W\|(\| W\|_{L^\infty}^{2\sigma}+\| W\|_{L^\infty})+s^{\beta}\|W\|_{L^\infty}^{2\sigma}
+\|\nabla\varphi\|_{L^{\frac{2}{3-2\nu_1}}}\|W\|_{L^{\frac{2\sigma}{\nu_1-1}}}^{2\sigma}\\
&\lesssim s^{\frac{\sigma}{4}+\beta-}\widetilde Q[W](s)+s^{\frac{\sigma^2}{2}+\frac{(\nu_1-1)\sigma}{2}+(2\sigma-1)\beta-}\widetilde Q[W](s),
\end{align*}
where we used the same argument as that in the proof of \eqref{proposition_nonlinear_proof_7} to estimate $\|W\|_{L^{\frac{2\sigma}{\nu_1-1}}}^{2\sigma}$ by $s^{\frac{\sigma^2}{2}+\frac{(\nu_1-1)\sigma}{2}-}\widetilde Q[W](s)^{2\sigma}$. 
The same bound also holds for $\nabla\{\widetilde W(|\widetilde W_{[\theta]}|^{2\sigma}-|\widetilde v_1|^{2\sigma})\}$. Hence
\begin{align}
\|\nabla e^{-i\gamma \Phi}G(W,v_1)\|\lesssim s^{\frac{\sigma}{4}+\beta-}\widetilde Q[W](s)+s^{\frac{\sigma^2}{2}+\frac{(\nu_1-1)\sigma}{2}+(2\sigma-1)\beta-}\widetilde Q[W](s).
\label{p_u_p_4}
\end{align}
The above three bounds \eqref{p_u_p_2}--\eqref{p_u_p_4} yield the desired estimate \eqref{p_u_p_1} for $G(W,v_1)$. 

The proof for $\widetilde{G}(W)$ is almost analogous. Indeed, \eqref{theorem_v_1}, \eqref{proposition_nonlinear_proof_1} and \eqref{p_u_p_1_0} imply
\begin{align*}
s^{-\frac\sigma2+\frac\delta2}\|\widetilde{G}(W)\|
\lesssim s^{-\frac\sigma2+\frac\delta2}\|W\|(\|\varphi\|_{L^\infty}^{2\sigma-1}\|v_1-v_{\mathrm p}\|_{L^\infty}+\|v_1-v_{\mathrm p}\|_{L^\infty}^{2\sigma})
\lesssim s^{\frac\sigma4-}\widetilde Q[W](s).
\end{align*}
Moreover, by setting $w=v_1-v_{\mathrm p}$ and rewriting $\overline{v_{\mathrm p}}\widetilde G(W)$ as 
\begin{align*}
\overline{v_{\mathrm p}}\widetilde G(W)
&=(\sigma+1)\overline{v_{\mathrm p}}W\left\{\sigma |v_{\mathrm p}|^{2\sigma-2}\overline{v_{\mathrm p}}w+\sigma|v_{\mathrm p}|^{2\sigma-2}v_{\mathrm p} \overline{w}\right\}\\
&\quad +\sigma \overline{v_{\mathrm p}}\overline{W} \left\{(\sigma+1)|v_{\mathrm p}|^{2\sigma-2}v_{\mathrm p}w+(\sigma-1)|v_{\mathrm p}|^{2\sigma-4}v_{\mathrm p}^3 \overline{w}\right\}
+\overline{v_{\mathrm p}}G_{4}
\end{align*}
with $w_\theta=v_{\mathrm p}+\theta w$ and 
\begin{align*}
G_4
&=(\sigma+1)\sigma Ww\int_0^1 \left(|w_\theta|^{2\sigma-1}\overline w-|v_{\mathrm p}|^{2\sigma-2}\overline{v_{\mathrm p}}\right)d\theta\\
&\quad +2 (\sigma+1)\sigma \Re[ W\overline w] \int_0^1 \left(|w_\theta|^{2\sigma-2}w_{\theta}-|v_{\mathrm p}|^{2\sigma-2}v_{\mathrm p}\right)d\theta
\\
&\quad+ \sigma(\sigma-1) \overline{Ww} \int_0^1 \left(|w_\theta|^{2\sigma-4}w_\theta^3-|v_{\mathrm p}|^{2\sigma-4}v_{\mathrm p}^3\right)d\theta. 
\end{align*}
Since $\Re[z_1z_2]=\Re[z_1]\Im[z_2]+\Im[z_1]\Re[z_2]$, we have
\begin{align*}
|\Im[\overline{v_{\mathrm p}}\widetilde G(W)]|\lesssim |\varphi|^{2\sigma-1}\left(\Re[\overline{v_{\mathrm p}}W]|w|+|W|\Re[\overline{v_{\mathrm p}}w]\right)+|\varphi||W||w|^{2\sigma}.
\end{align*}
The same argument as above then shows
$$
s^{\frac\sigma2-1}\||\varphi|^{\sigma-1}\Im[\overline{v_{\mathrm p}}\widetilde G(W)]\|
\lesssim s^{\frac\sigma4+\beta-}\widetilde Q[W](s)+s^{\frac{\sigma^2}{2}+\sigma-1+2\sigma\beta-}\widetilde Q[W](s).
$$
Finally, the desired estimate for $\|\nabla (e^{-i\gamma \Phi}\widetilde G(W))\|$ is obtained by completely the same argument as that for \eqref{p_u_p_4}. 
Indeed, $\nabla\{e^{-i\gamma \Phi}\overline W(|v_1|^{2\sigma-2}v_1^2-|\varphi|^{2\sigma-2}v_{\mathrm p}^2)$ is a linear combination of the same operators mentioned above as that for $\nabla (\overline{\widetilde W}(|\widetilde W_{[\theta]}|^{2\sigma-2}\widetilde W_{[\theta]}^2-|\widetilde v_1|^{2\sigma-2}\widetilde v_1^2))$ with $\widetilde W_{[\theta]}$ replaced by $\varphi$. Thus, the same argument as above yields
\begin{align*}
&\|\nabla\{e^{-i\gamma \Phi}\overline W(|v_1|^{2\sigma-2}v_1^2-|\varphi|^{2\sigma-2}v_{\mathrm p}^2)\|\\
&\lesssim \||\nabla \widetilde W|(|\widetilde v_1-\varphi|^{2\sigma}+|\varphi|^{2\sigma-1}|\widetilde v_1-\varphi|)\|
+\|\widetilde W|\widetilde v_1-\varphi|^{2\sigma-1}\nabla\widetilde v_1\|\\
&\quad +\|\widetilde W|\widetilde v_1-\varphi|^{2\sigma-1}\nabla(\widetilde v_1-\varphi)\|+\|\widetilde W|\widetilde v_1|^{2\sigma-1}\nabla(\widetilde v_1-\varphi)\|\\
&\lesssim \|\nabla \widetilde W\|\|v_1-v_{\mathrm p}\|_{L^\infty}+\|\widetilde W\|_{L^\infty}\|v_1-v_{\mathrm p}\|_{L^\infty}^{2\sigma-1}\|\nabla(\widetilde v_1-v_{\mathrm p})\|+\| W|v_1-v_{\mathrm p}|^{2\sigma-1}\|_{L^{\frac{1}{\nu_1-1}}}\\
&\lesssim s^{\frac{\sigma}{4}+\beta-}\widetilde Q[W](s)+s^{\frac{\sigma^2}{2}+\frac{(\nu_1-1)\sigma}{2}+(2\sigma-1)\beta-}\widetilde Q[W](s),
\end{align*}
where, in order to bound $\| W|v_1-v_{\mathrm p}|^{2\sigma-1}\|_{L^{\frac{1}{\nu_1-1}}}$ by $s^{\frac{\sigma^2}{2}+\frac{(\nu_1-1)\sigma}{2}+(2\sigma-1)\beta-}\widetilde Q[W](s)$, we interpolate between the following two cases $\nu_1=1,3/2$ obtained by \eqref{theorem_v_1} and \eqref{p_u_p_1_0}: 
\begin{align*}
\| W|v_1-v_{\mathrm p}|^{2\sigma-1}\|_{L^{\infty}}&\lesssim \|W\|_{L^\infty} \|v_1-v_{\mathrm p}\|_{L^\infty}^{2\sigma-1}\lesssim s^{\frac{\sigma^2}{2}+(2\sigma-1)\beta-}\widetilde Q[W](s),\\
\| W|v_1-v_{\mathrm p}|^{2\sigma-1}\|_{L^{2}}&\lesssim \|W\|_{L^2} \|v_1-v_{\mathrm p}\|_{L^\infty}^{2\sigma-1}\lesssim s^{\frac{\sigma^2}{2}+\frac\sigma4+(2\sigma-1)\beta-}\widetilde Q[W](s). 
\end{align*}
This completes the proof of \eqref{p_u_p_1}. 
\end{proof}

\appendix
\section{Some technical materials}
\label{appendix_A}
\subsection{Proof of the identity \eqref{eq_E_ep_1}}
Here we give a rigorous justification of \eqref{eq_E_ep_1}. Let $\psi\in C((0,1],H^1(\R))$ be the solution to \begin{align}
\label{appendix_A_1}
i\partial_s\psi +\frac12\Delta\psi
=\lambda  s^{\sigma-2}\left\{|\varphi|^{2\sigma}\psi+2\sigma  |\varphi|^{2\sigma-2}\Re[\overline{v_{\mathrm p}}\psi]v_{\mathrm p}\right\}.
\end{align}
Let $J_m=(I-\frac1m\Delta)^{-1}$ with $m>0$ and $\psi^m:=J_m\psi \in C^1((0,1],H^1(\R))$. Consider the identity
\begin{align}
\Re \<\text{LHS of }\eqref{appendix_A_1},\partial_s \psi^m\>
&=\Re \<\text{RHS of }\eqref{appendix_A_1},\partial_s \psi^m\>,\label{eq_E_ep_2_0}. 
\end{align}
Here and in what follows, $\<\cdot,\cdot\>$ is regarded as the duality coupling $\<\cdot,\cdot\>_{H^{-1},H^1}$. Since $J_m$ commutes with $\partial_s$ and $\Delta$, we have
\begin{align*}
\Re \<\text{LHS of \eqref{appendix_A_1}},\partial_s \psi^m\>=-\frac14\frac{d}{ds}\|J_m^{1/2}\nabla \psi\|^2=\frac{d}{ds}A_0^m,
\end{align*}
where $A_0^m:=-\frac14\|J_m^{1/2}\nabla \psi\|^2$. Since $\psi\in C((0,1],H^1(\R))$, $A_0^m\to A_0=-\frac14\|\nabla \psi\|^2$ as $m\to \infty$ for all $s\in (0,1]$. Moreover, the term $\Re \<\text{RHS of }\eqref{appendix_A_1},\partial_s \psi^m\>$ can be written as
\begin{align*}
&\lambda s^{\sigma-2}\Re\<|\varphi|^{2\sigma}\psi,\partial_s\psi^m\>
+2\sigma \lambda  s^{\sigma-2}\<|\varphi|^{2\sigma-2}\Re[\overline{v_{\mathrm p}}\psi],\partial_s\Re[\overline{v_{\mathrm p}}\psi^m]\>\\
&\quad -2\sigma\lambda  s^{\sigma-2}\Re \<|\varphi|^{2\sigma-2}\Re[\overline{v_{\mathrm p}}\psi],(\partial_s\overline{v_{\mathrm p}})\psi^m\>\\
&=:A_1^m+A_2^m+A_3^m
\end{align*}
so that 
\begin{align}
\label{eq_E_ep_2_1}
\frac{d}{ds}A_0^m=A_1^m+A_2^m+A_3^m. 
\end{align}
Since $\partial_s \overline{v_{\mathrm p}}=i\lambda  s^{\sigma-2}|\varphi|^{2\sigma}\overline{v_{\mathrm p}}\in C((0,1],H^1(\R))$, we observe that $A_3^m$ converges to 
\begin{align*}
&-2\sigma\lambda  s^{\sigma-2}\Re \<|\varphi|^{2\sigma-2}\Re[\overline{v_{\mathrm p}}\psi],i\lambda  s^{\sigma-2}|\varphi|^{2\sigma}\overline{v_{\mathrm p}}\psi\>\\
&=2\sigma\lambda^2  s^{2\sigma-4} \<|\varphi|^{2\sigma-2}\Re[\overline{v_{\mathrm p}}\psi],|\varphi|^{2\sigma}\Im[\overline{v_{\mathrm p}}\psi]\>=:A_3
\end{align*}
as $m\to \infty$ for all $s\in (0,1]$, where we used the identity $\Re\<f,ig\>=\<f,\Re[ig]\>=-\<f,\Im g\>$ for real-valued $f$. For $A_1^m$, we can rewrite it as
\begin{align}
\nonumber
A_1^m
&=\lambda s^{\sigma-2}\frac{d}{ds}\Re\<|\varphi|^{2\sigma}\psi,\psi^m\>-\lambda s^{\sigma-2}\Re\<|\varphi|^{2\sigma}\partial_s\psi,\psi^m\>\\
\nonumber
&=\frac{d}{ds}\lambda s^{\sigma-2}\Re\<|\varphi|^{2\sigma}\psi,\psi^m\>
-\lambda(\sigma-2) s^{\sigma-3}\Re\<|\varphi|^{2\sigma}\psi,\psi^m\>\\
\nonumber
&\quad-\lambda s^{\sigma-2}\Re\<|\varphi|^{2\sigma}\partial_s\psi,\psi^m\>\\
\label{eq_E_ep_2_2}
&=:\frac{d}{ds}A_{11}^m+A_{12}^m,
\end{align}
where  $A_{11}^m:=\lambda s^{\sigma-2}\Re\<|\varphi|^{2\sigma}\psi,\psi^m\>$ converges to $A_{11}:=\lambda s^{\sigma-2}\||\varphi|^{\sigma}\psi\|^2$ as $m\to \infty$. 
Since $|\varphi|^{2\sigma}\partial_s\psi\in C((0,1],H^{-1}(\R))$, $A_{12}^m$ converges to the function 
\begin{align*}
A_{12}
&=-\lambda(\sigma-2) s^{\sigma-3}\Re\<|\varphi|^{2\sigma}\psi,\psi\>-\lambda s^{\sigma-2}\Re\<|\varphi|^{2\sigma}\partial_s\psi,\psi\>\\
&=-\frac12\left(\frac{d}{ds}\lambda s^{\sigma-2}\||\varphi|^{\sigma}\psi\|^2+\lambda(\sigma-2) s^{\sigma-3}\||\varphi|^{\sigma}\psi\|^2\right)\end{align*}
as $m\to \infty$. Similarly, if we rewrite $A_2^m$ as 
\begin{align}
\nonumber
A_2^m&=\frac{d}{ds}2\sigma\lambda  s^{\sigma-2} \<|\varphi|^{2\sigma-2}\Re[\overline{v_{\mathrm p}}\psi],\Re[\overline{v_{\mathrm p}}\psi^m]\>\\
\nonumber
&\quad -2\sigma(\sigma-2)\lambda  s^{\sigma-3} \<|\varphi|^{2\sigma-2}\Re[\overline{v_{\mathrm p}}\psi],\Re[\overline{v_{\mathrm p}}\psi^m]\>\\
\nonumber
&\quad -2\sigma\lambda  s^{\sigma-2}\<\partial_s\Re[\overline{v_{\mathrm p}}\psi],|\varphi|^{2\sigma-2}\Re[\overline{v_{\mathrm p}}\psi^m]\>\\
\label{eq_E_ep_2_3}
&=:\frac{d}{ds}A_{21}^m+A_{22}^m
\end{align}
with $A_{21}^m=2\sigma\lambda  s^{\sigma-2} \<|\varphi|^{\sigma-1}\Re[\overline{v_{\mathrm p}}\psi],\Re[\overline{v_{\mathrm p}}\psi^m]\>$, then as $m\to \infty$, 
\begin{align*}
A_{21}^m&\to A_{21}:=2\sigma\lambda  s^{\sigma-2} \||\varphi|^{2\sigma-2}\Re[\overline{v_{\mathrm p}}\psi]\|^2,\\
A_{22}^m&\to A_{22}
:=- \frac{d}{ds}\sigma\lambda  s^{\sigma-2}\||\varphi|^{\sigma-1}\Re[\overline{v_{\mathrm p}}\psi]\|^2-\sigma(\sigma-2)\lambda  s^{\sigma-3} \||\varphi|^{\sigma-1}\Re[\overline{v_{\mathrm p}}\psi]\|^2.
\end{align*}
Moreover, identities \eqref{eq_E_ep_2_0}--\eqref{eq_E_ep_2_3} yield
\begin{align}
\label{eq_E_ep_2_4}
\frac{d}{ds}(A_0^m-A_{11}^m-A_{21}^m)=A_{12}^m+A_{22}^m+A_3^m,
\end{align}
where the RHS is continuous in $s\in (0,1]$. Then, integrating both sides of \eqref{eq_E_ep_2_4} over $[s,t]$ for $0<s,t\le1$, taking the limit $m\to \infty$ and using the bounded convergence theorem imply
$$
\left(A_0-A_{11}-A_{21}\right)(t)-\left(A_0-A_{11}-A_{21}\right)(s)=\int_s^{t}\left(A_{12}+A_{22}+A_3\right)(r)dr. 
$$
This shows
$
\frac{d}{ds}(-A_0+A_{11}+A_{21})=-A_{12}-A_{22}-A_3
$, 
or explicitly, 
\begin{align*}
&\frac{d}{ds}\left(\frac14\|\nabla \psi\|^2
+\lambda s^{\sigma-2}\||\varphi|^{\sigma}\psi\|^2
+2\sigma\lambda  s^{\sigma-2} \||\varphi|^{\sigma-1}\Re[\overline{v_{\mathrm p}}\psi]\|^2\right)\\
&=\frac12\left(\frac{d}{ds}\lambda s^{\sigma-2}\||\varphi|^{\sigma}\psi\|^2+\lambda(\sigma-2) s^{\sigma-3}\||\varphi|^{\sigma}\psi\|^2\right)\\
&\quad +\frac{d}{ds}\sigma\lambda  s^{\sigma-2}\||\varphi|^{\sigma-1}\Re[\overline{v_{\mathrm p}}\psi]\|^2+\sigma(\sigma-2)\lambda  s^{\sigma-3} \||\varphi|^{\sigma-1}\Re[\overline{v_{\mathrm p}}\psi]\|^2\\
&\quad -2\sigma\lambda^2  s^{2\sigma-4} \<|\varphi|^{2\sigma-2}\Re[\overline{v_{\mathrm p}}\psi],|\varphi|^{2\sigma}\Im[\overline{v_{\mathrm p}}\psi]\>.
\end{align*}
This is equivalent to \eqref{eq_E_ep_1}. \qed

\subsection{Proof of the pseudo-conformal conservation law}
Here we prove the pseudo-conformal conservation law \eqref{law} for $E(s)$ following the argument by \cite{Ozawa_CVPDE,Fujiwara_Miyazaki}. 
Recall that  for all $\alpha<1$, $$v \in C_{\mathrm w}((0,1],H^1(\R))\cap W_{\mathrm{loc}}^{1,\infty}((0,1],H^{-1}(\R))\cap C((0,1],H^\alpha(\R)).$$ Let 
$
v^m=J_mv$,  $J_m=(I-\frac1m\Delta)^{-1}$, 
$f=\lambda s^{\sigma-2}|v|^{2\sigma}v$ and  $f^m=J_mf
$. Fix $0<s\le\tau\le s_1$ arbitrarily. By  Duhamel's formula \eqref{Duhamel}, we have
\begin{align}
\nabla v^m(s)=e^{i(s-\tau)\Delta/2}\nabla v^m(\tau)-i\int_{\tau}^s e^{i(s-r)\Delta}\nabla f^m(r)dr
\label{ap_B_0}
\end{align}
in $L^2(\R)$ thanks to the smoothing operator $J_m$. 
Then
\begin{align}
\frac12\|\nabla v^m(s)\|^2
&=\frac12\|\nabla v^m(\tau)\|^2-\Im\int_{\tau}^s\<e^{i(r-\tau)\Delta/2}\nabla v^m(\tau),\nabla f^m(r)\>dr\nonumber\\
&\quad +\frac12\left\|\int_{\tau}^s e^{-ir\Delta/2}\nabla f^m(r)dr\right\|^2,
\label{ap_B_1}
\end{align}
where $\|\nabla v^m(r)\|^2\to \|\nabla v(r)\|^2$ as $m\to \infty$ for $r=s,\tau$ since $v(s),v(\tau)\in H^1(\R)$ and $J_m\to I$ strongly on $L^2(\R)$. For the RHS of \eqref{ap_B_1}, 
by using Fubini's theorem, the formula $$\int_{\tau}^s\int_{\tau}^s (\cdots) drdr'=2\int_{\tau}^s\int_{\tau}^r (\cdots) dr'dr$$ and \eqref{ap_B_0}, one can rewrite the last term of \eqref{ap_B_1} as
\begin{align*}
&\frac12\Re\int_{\tau}^s \int_{\tau}^s\<e^{-ir'\Delta/2}\nabla f^m(r'),e^{-ir\Delta/2}\nabla f^m(r)\>drdr'\\
&=\Re\int_{\tau}^s\left\langle\int_{\tau}^r e^{i(r-r')\Delta/2}\nabla f^m(r') dr',\nabla f^m(r)\right\rangle dr\\
&=\Re\int_{\tau}^s\langle i\nabla v^m(r)-ie^{i(r-\tau)\Delta/2}\nabla v^m(\tau),\nabla f^m(r)\rangle dr\\
&=-\Im\int_{\tau}^s\<\nabla v^m(r),\nabla f^m(r)\>dr+\Im\int_{\tau}^s\<e^{i(r-\tau)\Delta/2}\nabla v^m(\tau),\nabla f^m(r)\>dr,
\end{align*}
where  the last term cancels out with the second term in the RHS of \eqref{ap_B_1}. Moreover, by regarding $\<\cdot,\cdot\>$ as the duality coupling $\<\cdot,\cdot\>_{H^{-1},H^1}$, we obtain 
\begin{align*}&-\Im\int_{\tau}^s\<\nabla v^m(r),\nabla f^m(r)\> dr=\Im\int_{\tau}^s\<J_m\Delta v(r), f^m(r)\>dr\\&=2\Im\int_{\tau}^s\<J_m\{-i\dot v(r)+f(r)\},f^m(r)\>dr\\&=-2\Re\int_{\tau}^s\< \dot v(r),f(r)\>dr-2\Re\int_{\tau}^s\<(J_m^2-I)\dot v(r),f(r)\>dr,\end{align*} where $\dot v=\partial_s v$. By \eqref{NLS_v}, the first term of the RHS is equal to $$-\frac{\lambda s^{\sigma-2}}{\sigma+1} \int_\R |v(s)|^{\sigma+2}dx+\frac{\lambda \tau^{\sigma-2}}{\sigma+1} \int_\R  |v(\tau)|^{2\sigma+2}dx+\frac{(\sigma-2)\lambda }{\sigma+1}\int_{\tau}^s r^{\sigma-3}\int_\R|v(r)|^{2\sigma+2}dxdr.$$Besides, the second term of the RHS converges to $0$ as $m\to \infty$ since $J_m^2\to I$ on $H^{-1}(\R)$ and hence $\<(J_m^2-I)\dot v(r),f(r)\>\to 0$ as $m\to \infty$ for a.e $r\in [s,\tau]$, and $$|\<(J_m^2-I)\dot v(r),f(r)\>|\le 2\|\dot v(r)\|_{H^{-1}}\|f(r)\|_{H^1}\lesssim 1$$uniformly in $r\in [\tau,s]$ and $m>0$. 
Hence, by letting $m\to \infty$ in \eqref{ap_B_1}, we obtain \eqref{law}. \qed

\section*{Acknowledgments}
M. K. is partially supported by JSPS KAKENHI Grant Numbers JP24K06796 and JP26K00612. H. M. is partially supported by JSPS KAKENHI Grant Numbers JP21K03325, JP24K00529, JP26K22272 and JP26K00612. This work was supported by the Research Institute for Mathematical Sciences, an International Joint Usage/Research Center located in Kyoto University.


\end{document}